\theoremstyle{plain} \newtheorem{theorem}{Theorem}[section]
\newtheorem*{theorem-non}{Theorem}
\newtheorem{proposition}[theorem]{Proposition}
\newtheorem{corollary}[theorem]{Corollary}
\newtheorem*{corollary-non}{Corollary}
\newtheorem{conjecture}[theorem]{Conjecture}
\newtheorem{convention}[theorem]{Convention}
\newtheorem*{conjecture-non}{Conjecture}
\newtheorem{lemma}[theorem]{Lemma} \newtheorem*{lemma-non}{Lemma}
\theoremstyle{definition} \newtheorem{definition}[theorem]{Definition}
\theoremstyle{remark} \newtheorem{remark}[theorem]{Remark}
\newtheorem{example}[theorem]{Example}
\newtheorem*{example-non}{Example}
\numberwithin{equation}{section}
\newcommand{\C}{\mathbb C} \newcommand{\N}{\mathbb N}
 \newcommand{\R}{\mathbb R}
\newcommand{\Z}{\mathbb Z}
\newcommand{\cala}{{\mathcal A}} \newcommand{\calb}{{\mathcal B}}
\newcommand{\caln}{{\mathcal N}} 
 \newcommand{\calr}{{\mathcal R}}
\newcommand{\bbC}{{\mathbb C}} \newcommand{\bbN}{{\mathbb N}}
\newcommand{\bbR}{{\mathbb R}} \newcommand{\bbZ}{{\mathbb Z}}
\DeclareMathOperator{\cell}{cell} \DeclareMathOperator{\colim}{colim}
\DeclareMathOperator{\cone}{cone} \DeclareMathOperator{\diag}{diag}
\DeclareMathOperator{\even}{even} \DeclareMathOperator{\im}{im}
\DeclareMathOperator{\odd}{odd} \DeclareMathOperator{\pr}{pr}
\DeclareMathOperator{\tr}{tr} \DeclareMathOperator{\id}{id}
\newcommand{\action}{\curvearrowright}
\newcommand{\abs}[1]{{\left\lvert #1\right\rvert}}
\newcommand{\fixabs}[1]{{\lvert #1\rvert}}
 \newcommand{\torsion}{\rho^{(2)}}
\renewcommand{\Im}{\operatorname{im}}
\newcommand{\ind}{\operatorname{ind}}
\newcommand{\map}{\operatorname{map}}
\newcommand{\norm}[1]{{\left\lVert #1\right\rVert}}
\newcommand{\determ}{{\textstyle\operatorname{det}}}
\newcommand{\fixnorm}[1]{{\lVert #1\rVert}}
\newcommand{\res}{\operatorname{res}}
\newcommand{\trace}{\operatorname{tr}}
\newcommand{\astvn}{\operatorname{\ast_{vN}}}
\title[$L^2$-Torsion and uniform orbit equivalence]{L$^2$-Torsion, the
  measure-theoretic determinant conjecture, and uniform measure
  equivalence}
\author{Wolfgang L\"uck} \author{Roman Sauer} \author{Christian
  Wegner}
\subjclass[2000]{Primary: 57Q10; Secondary: 20F65, 37A20, 46L85}
\keywords{$L^2$-torsion, measure equivalence, orbit equivalence}
\thanks{We gratefully acknowledge financial support by the
  Sonderforschungsbereich 478 \emph{Geometrische Strukturen in der
    Mathematik} and by the \emph{Max-Planck-Forschungspreis} and the
  \emph{Leibniz-Preis} of the first author.}
\address{Mathematisches Institut \\ Universit\"at M\"unster \\
  Einsteinstra{\ss}e 62 \\ M\"unster, D-48149 \\ Germany}
\email{lueck@uni-muenster.de} \email{sauerr@uni-muenster}
\email{c.wegner@uni-muenster.de}
\urladdr{http://wwwmath.uni-muenster.de/u/lueck/}
\urladdr{http://wwwmath.uni-muenster.de/u/sauerr/}
\urladdr{http://wwwmath.uni-muenster.de/reine/inst/lueck/homepages/christian\_wegner/wegner.html}
\begin{document}
\maketitle

\begin{abstract}
  We show an invariance result for the $L^2$-torsion of groups under
  uniform measure equivalence provided a measure-theoretic version of
  the determinant conjecture holds. The measure-theoretic determinant
  conjecture is discussed and, for instance, proved for Bernoulli
  actions of residually amenable groups.
\end{abstract}

\section{Introduction} Gaboriau~\cite{gaboriau-main} introduced
\emph{$L^2$-Betti numbers of measured equivalence relations} and
proved that two measure equivalent countable groups have proportional
$L^2$-Betti numbers.  This notion turned out to have many important
applications in recent years, most notably through the work of
Popa~\cite{popa-survey}.

In the present paper we study another well known $L^2$-invariant of a
discrete group $G$, the \emph{$L^2$-torsion} $\torsion(G)$, with
regard to measure equivalence.  The $L^2$-torsion
(Definition~\ref{def:l2 torsion}) is only defined if all the
$L^2$-Betti numbers of $G$ vanish and the \emph{determinant
  conjecture} (see Definition~\ref{def:determinant conjecture for
  groups}) -- an integral relative of the \emph{Connes' embedding
  problem} (see Remark~\ref{rem:connes embedding conjecture})-- holds 
for $G$. The determinant conjecture is intensively
studied~\cites{burgh,Sch, approx}, and there is no counterexample
known.  Notably, all sofic groups satisfy the determinant
conjecture~\cite{ES1}.

The notion of \emph{measure equivalence} was introduced by
Gromov~\cite{Gro}*{0.5.E} and, for the first time, gained prominence
in the work of Furman~\cite{Fur1}*{Definition~1.1}:

\begin{definition}\label{def:measure equivalence}
  Two countable groups $G$ and $H$ are called \emph{measure equivalent
    with index $c=I(G,H)>0$} if there exists a non-trivial standard
  measure space $(\Omega,\mu)$ on which $G\times H$ acts such that the
  restricted actions of $G=G\times\{1\}$ and $H=\{1\}\times H$ have
  measurable fundamental domains $X\subset\Omega$ and
  $Y\subset\Omega$, with $\mu(X)<\infty$, $\mu(Y)<\infty$, and
  $c=\mu(X)/\mu(Y)$. The space $(\Omega,\mu)$ is called a
  \emph{measure coupling} between $G$ and $H$ (of index $c$).
\end{definition}

Evidence for the following conjectural analog
(compare~\cite{Luc2}*{Question~7.35 on p.~313}) of the aforementioned
result by Gaboriau comes from computations and the formal similarities
between $\torsion(G)$ and the Euler characteristic of $G$.

\begin{conjecture}\label{con:ultimate conjecture}
  Let $G$ and $H$ be countable groups such that all the $L^2$-Betti
  numbers of $G$ and $H$ vanish. Assume that both $G$ and $H$ admit finite
  CW-models for their classifying spaces.  Then $G$ and $H$ satisfy the determinant 
conjecture. If $G$ and $H$ are measure
  equivalent with index $c=I(G,H)>0$, then
  \[
  \torsion(G)=c\cdot\torsion(H).
  \]
\end{conjecture}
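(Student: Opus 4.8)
The plan is to imitate the mechanism by which Gaboriau proved proportionality of $L^2$-Betti numbers, but to carry it up one level, from the primary invariant (the von Neumann dimension) to the secondary one (the Fuglede--Kadison determinant). First I would replace each group by an orbit equivalence relation. Choose essentially free, probability-measure-preserving actions $G\action(X,\mu_X)$ and $H\action(Y,\mu_Y)$, form the orbit equivalence relations $\calr_G$, $\calr_H$ and their groupoid von Neumann algebras $\caln(\calr_G)$, $\caln(\calr_H)$ with their canonical finite traces $\trace$. A finite $G$-CW-model for $EG$ yields over $\caln(\calr_G)$ a finite Hilbert-module chain complex with Laplacians $\Delta_p$, and I would define $\betti_p(\calr_G)=\dim_{\caln(\calr_G)}\ker\Delta_p$ and, once these vanish and the $\Delta_p$ are of determinant class, the torsion $\torsion(\calr_G)=-\tfrac12\sum_p(-1)^p\,p\cdot\ln\determ_{\caln(\calr_G)}(\Delta_p)$, and symmetrically for $H$.

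The first substantive step is the induction identity $\torsion(\calr_G)=\torsion(G)$ together with $\betti_p(\calr_G)=\betti_p(G)$. Both follow from the single fact that, for a free action, the inclusion $\caln(G)\hookrightarrow\caln(\calr_G)$ is trace-preserving: since the Laplacians are matrices over $\Z G$, their spectral measures with respect to $\trace$ are the same whether computed in $\caln(G)$ or in $\caln(\calr_G)$, so kernels and Fuglede--Kadison determinants agree verbatim. This same spectral-measure agreement delivers the first assertion of the statement: granting the \emph{measure-theoretic determinant conjecture}, which says that the integral operators over $\caln(\calr_G)$ are of determinant class, the $\Z G$-matrices among them are determinant class over $\caln(\calr_G)$ and hence, by the agreement of spectral measures, over $\caln(G)$ as well, so that $G$ (and symmetrically $H$) satisfies the ordinary determinant conjecture.

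Next I would feed the measure coupling into the standard dictionary between measure equivalence and stable orbit equivalence. Because the $G$- and $H$-actions on $(\Omega,\mu)$ commute, the $G$-action descends to a free p.m.p.\ action on $Y\cong\Omega/H$ and the $H$-action to one on $X\cong\Omega/G$, and restricting the two orbit equivalence relations to a common transversal identifies a compression of $\calr_G$ with a compression of $\calr_H$, the ratio of the two compression constants being exactly the index $c=\mu(X)/\mu(Y)$. The decisive computation is then a \emph{restriction formula}: for a measurable $A$ meeting almost every orbit one has $\torsion(\calr|_A)=\mu(A)^{-1}\cdot\torsion(\calr)$, which I would derive from the behaviour of the trace and of the Fuglede--Kadison determinant under compression by the projection onto $A$. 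Applying this formula on both sides of the orbit equivalence and inserting the induction identity of the previous step assembles into $\torsion(G)=c\cdot\torsion(H)$, the factor $c$ being produced precisely by the two compression constants.

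The hard part, and the reason the statement must remain conjectural, is the measure-theoretic determinant conjecture itself: it is what makes $\torsion(\calr)$ defined at all, and what guarantees that the determinants behave continuously enough for both the induction identity and the restriction formula to hold. For a discrete group the determinant conjecture is accessible through approximation by finite or amenable quotients, and is known for every sofic group; but the trace on $\caln(\calr)$ is diffuse, so these approximation techniques have no direct analogue, and controlling the spectral density of $\Delta_p$ near $0$ uniformly is the genuine obstacle. This is exactly why one is forced either to assume the measure-theoretic determinant conjecture outright or, as in the unconditional results of this paper, to retreat to the more rigid setting of \emph{uniform} measure equivalence, where enough combinatorial control survives to transport the determinant estimates from the group to the coupling.
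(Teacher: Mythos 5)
The statement you are proving is Conjecture~\ref{con:ultimate conjecture}; the paper does not prove it, and offers only the conditional, uniform case (Theorem~\ref{thm:invariance under uniform ME}, assuming $G$ satisfies MDC) together with Theorem~\ref{thm:MDC implies det class for ME groups}. Your outline does reproduce the intended strategy of that partial result: pass to weakly orbit equivalent actions, use that induction along the trace-preserving inclusion $\caln(G)\hookrightarrow\caln(\calr_G)$ preserves spectral density functions (Remark~\ref{rem:inclusion of vN algebras}), and use the compression formula~\eqref{eq:det under restriction} to produce the index $c$. To that extent the skeleton is right.

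The genuine gap is the sentence ``Applying this formula on both sides of the orbit equivalence \ldots\ assembles into $\torsion(G)=c\cdot\torsion(H)$.'' The isomorphism $\calr_G\vert_A\cong\calr_H\vert_B$ identifies the von Neumann algebras, but $\torsion$ is not an invariant of the algebra: it is computed from two \emph{different} chain complexes, one induced from $C^{\cell}_*(\widetilde{BG})$ and one from $C^{\cell}_*(\widetilde{BH})$. One must compare these as finite projective resolutions of $L^\infty(A,\Z)$ over a common ring, take a chain homotopy equivalence $h_*$ between them, and show $\torsion(\cone_*(h_*^{(2)}))=0$; this is where MDC is actually used (determinants $\ge 1$ for integral matrices in \emph{both} directions of an isomorphism force determinant $=1$), and it is the entire content of the proof of Theorem~\ref{thm:invariance under uniform ME}. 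Moreover, without uniformity the comparison can only be made over the full groupoid ring $\Z\calr$, where the resulting homotopy equivalence is an \emph{unbounded} operator; one must then exhaust by subcomplexes and pass to a limit of determinants, and Section~\ref{sec:caveat} exhibits projections $p_k$ converging weakly to the identity with $\determ(f\circ p_k)\not\to\determ(f)$. So your closing ``either assume MDC outright or retreat to uniform measure equivalence'' is not an either/or: the paper needs \emph{both} hypotheses, and even granting MDC your argument does not close in the non-uniform case because of this failure of continuity of the Fuglede--Kadison determinant, which is a second, independent obstruction that your proposal conflates with MDC.
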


If one attempts to solve the conjecture by eventually reducing it to
homotopy invariance, which is done in~\cite{gaboriau-main} for
$L^2$-Betti numbers (see~\cite{Sau} for another approach), then one
encounters at least two difficulties that do not appear for
$L^2$-Betti numbers: one is the need of a measure-theoretic version of
the determinant conjecture (Definition~\ref{def: measure theoretic
  determinant conjecture} below), the other is that the
Fuglede-Kadison determinant lacks a continuity property that is
obvious for the trace (discussed in Section~\ref{sec:caveat}). The latter 
difficulty disappears if we restrict to uniformly measure equivalent groups 
in Conjecture~\ref{con:ultimate conjecture}. 

The proof of the invariance of the 
$L^2$-torsion under \emph{uniform measure equivalence}, which is the main 
topic of this paper, is nevertheless much more involved than the one of the 
invariance of $L^2$-Betti numbers under uniform measure equivalence. 

\begin{definition}\label{def:uniform ME in intro}
  Two countable groups are \emph{uniformly measure equivalent of index
    $c=I(G,H)$} if there exists a measure coupling $(\Omega,\mu)$
  between $G$ and $H$ of index $c$ with a measurable $G$-fundamental
  domain $X$ and measurable $H$-fundamental domain $Y$ such that the
  following two conditions hold:
  \begin{enumerate}
  \item for every $g\in G$ there is a finite subset $H(g)\subset H$
    such that $gY\subset H(g)Y$ up to $\mu$-null sets, and
  \item for every $h\in H$ there is a finite subset $G(h)\subset G$
    such that $hX\subset G(h)X$ up to $\mu$-null sets.
  \end{enumerate}
\end{definition}

Uniform measure equivalence was introduced by Shalom and studied in
the context of quasi-isometry of amenable groups by Shalom and the
second author~\cite{shalom-amenable, sauer-amenable}.  Uniform measure
equivalence is much more restrictive and geometric than measure
equivalence. Here are important examples:
 
\begin{example}\label{exa:examples of UME}\hfill
  \begin{enumerate}
  \item Two uniform lattices in the same locally compact, second
    countable Hausdorff group are uniformly measure equivalent.
  \item Two finitely generated amenable groups are uniformly measure
    equivalent if and only if they are
    quasi-isometric~\citelist{\cite{shalom-amenable}*{Theorem~2.1.7}\cite{sauer-promotion}*{Lemma~2.25}}.
  \end{enumerate}
\end{example}

Next we introduce a measure-theoretic version of the determinant
conjecture. Some definitions are in order:

Let $\calr$ be a standard equivalence relation on a standard
probability space $(X,\mu)$ with an invariant measure $\mu$ in the
sense of~\cite{feldman+mooreI}*{Section~2}. We call an equivalence
relation with these properties just a \emph{measured equivalence
  relation}.  We call an action of a countable group $G$ on a standard
probability space $(X,\mu)$ $\emph{standard}$ if it is measurable and
$\mu$ is $G$-invariant.  Every measured equivalence relation on
$(X,\mu)$ is the orbit equivalence relation of some standard action of
a countable group on $(X,\mu)$~\cite{feldman+mooreI}*{Theorem~1}.

\begin{definition}\label{def:groupoid ring}
  The \emph{groupoid ring} of $\calr$ is, as an additive group,
  defined as
  \begin{multline*}
  \bbC\calr=\bigl\{f:\calr\rightarrow\bbC~\vert~\exists N\in\bbN\colon\text{$f^{-1}(\bbC^\times)\cap
    \{x\}\times X$ and $f^{-1}(\bbC^\times)\cap X\times\{x\}$ have}\\\text{ cardinality 
    at most $N$ a.e.}\bigr\}\subset L^\infty(\calr,\bbC).
  \end{multline*}
  endowed with the following multiplication, involution, and trace,
  respectively:
  \begin{enumerate}
  \item $(fg)(x,y)=\sum_{z\sim x}f(x,z)g(z,y)$,
  \item $f^\ast(x,y)=f(y,x)$,
  \item $\tr_{\caln(\calr)}(f)=\int_X f(x,x)d\mu(x)$.
  \end{enumerate}
  The \emph{integral groupoid ring} is $\bbZ\calr=\bbC\calr\cap
  L^\infty(\calr,\bbZ)$. The finite von Neumann algebra of
  $\calr$~\cite{feldman+mooreII}*{Section~2}, which contains
  $\bbC\calr$ as a weakly dense subset and to which
  $\tr_{\caln(\calr)}$ extends as a finite trace, is denoted by
  $\caln(\calr)$. If $\calr$ is the orbit equivalence relation of a
  standard action $G\action X$, we will also use the notation
  $\caln(G\action X)$ instead of $\caln(\calr)$.
\end{definition}

\begin{definition}\label{def: measure theoretic determinant conjecture}
  Let $\calr$ be a standard equivalence relation and $G$ be a countable
  group.  We say that $\calr$ satisfies the measure-theoretic
  determinant conjecture (abbreviated as \emph{MDC}) if the
  (generalized) Fuglede-Kadison determinant of every matrix $A\in
  M(m\times n, \bbZ\calr)$ satisfies
  \[
  \determ_{\caln(\calr)}(A)\ge 1.
  \]
\end{definition}
A standard action of $G$ satisfies MDC if its orbit equivalence
relation satisfies MDC. The group $G$ satisfies MDC if every
essentially free, standard action of $G$ satisfies MDC.

\begin{conjecture}[Measure Theoretic Determinant Conjecture]
  \label{con: Measure Theoretic Determinant Conjecture}
  Every standard equivalence relation satisfies the measure-theoretic
  determinant conjecture.
\end{conjecture}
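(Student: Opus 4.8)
The plan is to prove the conjecture by transporting the Fuglede--Kadison approximation scheme for the determinant conjecture from groups to the groupoid setting. For $A\in M(m\times n,\bbZ\calr)$ the determinant is read off from the spectral measure $\mu_{A^\ast A}$ of the positive operator $A^\ast A\in M(n\times n,\caln(\calr))$, weighted by $\tr_{\caln(\calr)}$, via $\log\determ_{\caln(\calr)}(A)=\frac12\int_{(0,\infty)}\log(\lambda)\,d\mu_{A^\ast A}(\lambda)$. Thus $\determ_{\caln(\calr)}(A)\ge 1$ is equivalent to $\int_{(0,\infty)}\log(\lambda)\,d\mu_{A^\ast A}(\lambda)\ge 0$, i.e.\ to a bound on how much mass $\mu_{A^\ast A}$ can accumulate near $0$. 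The two ingredients are an \emph{integer lower bound} on finite models and a \emph{semicontinuity} argument that transfers it to the limit.

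First I would settle the case that $\calr$ is hyperfinite, say $\calr=\bigcup_k\calr_k$ with finite-orbit measured subrelations $\calr_k$. Over the space of $\calr_k$-orbits the algebra $\caln(\calr_k)$ is a measurable field of finite matrix algebras $M_{r(x)}(\bbC)$ with normalized traces, and the restriction $A_k$ of $A$ becomes a measurable field of \emph{integer} matrices $B(x)$. For any integer matrix $B$ the product of the nonzero eigenvalues of $B^\ast B$ equals the sum of its principal minors of size $\operatorname{rank}B$, hence is a positive integer; therefore $\int_{(0,\infty)}\log(\lambda)\,d\mu_{B^\ast B}(\lambda)\ge 0$ fibrewise, and integrating over the orbit space gives $\determ_{\caln(\calr_k)}(A_k)\ge 1$ for every $k$. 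As $\tr_{\caln(\calr_k)}$ approximates $\tr_{\caln(\calr)}$ on the weakly dense subalgebra $\bbC\calr$, all moments $\tr\bigl((A^\ast A)^p\bigr)$ are limits of $\tr\bigl((A_k^\ast A_k)^p\bigr)$, so $\mu_{A_k^\ast A_k}\to\mu_{A^\ast A}$ weakly; a uniform bound $\norm{A}\le C$ confines every spectral measure to $[0,C^2]$. With the Fuglede--Kadison convention $\log 0:=0$ the map $\lambda\mapsto\log(\lambda)$ is upper semicontinuous and bounded above on $[0,C^2]$, so weak convergence yields $\int\log(\lambda)\,d\mu_{A^\ast A}(\lambda)\ge\limsup_k\int\log(\lambda)\,d\mu_{A_k^\ast A_k}(\lambda)\ge 0$ and hence $\determ_{\caln(\calr)}(A)\ge 1$.

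To reach an \emph{arbitrary} standard $\calr$ the finite subrelations above must be replaced by \emph{sofic approximations}: a sequence of asymptotically multiplicative, trace- and involution-preserving maps from $\calr$ into finite combinatorial data, equivalently into matrix algebras $M_{k_n}(\bbC)$ with normalized traces, carrying integral kernels to honest integer matrices. Given such a sequence, the image $A_n$ of $A$ is an integer matrix, the integer bound of the previous paragraph gives $\frac1{k_n}\log\abs{\det(A_n^\ast A_n)}\ge 0$, and the same semicontinuity argument---now using asymptotic multiplicativity to match the moments of $A^\ast A$---forces $\determ_{\caln(\calr)}(A)\ge 1$. This is precisely the groupoid analogue of the theorem of Elek and Szab\'o that sofic groups satisfy the determinant conjecture~\cite{ES1}, and it proves MDC for every \emph{sofic} equivalence relation.

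The hard part, and the reason the statement is posed as a conjecture rather than a theorem, is the final reduction: one must show that \emph{every} standard equivalence relation is sofic. Soficity holds in a wide and operationally closed class---hyperfinite relations (handled directly above), orbit relations of essentially free actions of sofic groups, and, as established in this paper, Bernoulli actions of residually amenable groups---and it is stable under the natural constructions. But no argument is known that produces a sofic approximation for a \emph{general} measured equivalence relation; this is an open problem intimately tied to the soficity of all countable groups and to Connes' embedding problem (Remark~\ref{rem:connes embedding conjecture}). I therefore expect the obstacle to be exactly this universal soficity question: the scheme above reduces the full conjecture to it, proving MDC unconditionally for the large class of sofic equivalence relations while leaving the general case open.
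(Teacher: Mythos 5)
You were asked to prove Conjecture~\ref{con: Measure Theoretic Determinant Conjecture}, but the paper contains no proof of it: it is posed as an open conjecture, and the paper's supporting evidence consists of the special cases listed in Theorem~\ref{thm:precise MDC result}. Your proposal correctly recognizes this and is honest about what it delivers: an unconditional argument for a restricted class (hyperfinite, and more generally sofic, equivalence relations) together with a reduction of the full conjecture to the open question of whether every standard equivalence relation is sofic. That assessment of the obstruction is accurate and consonant with the paper's own framing via Remark~\ref{rem:connes embedding conjecture}.

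Comparing routes: your analytic engine --- integer lower bounds on finite models, plus upper semicontinuity of $\lambda\mapsto\log\lambda$ (with $\log 0:=0$) under weak convergence of spectral measures controlled by moments and a uniform norm bound --- is exactly the paper's Lemma~\ref{lem:Approximation Lemma}, which you in effect reprove by the portmanteau theorem; and your fibrewise integrality argument (the product of the nonzero eigenvalues of $B^\ast B$ for integral $B$ is the last nonzero coefficient of the characteristic polynomial, hence a positive integer) is word for word the paper's proof of Theorem~\ref{thm:precise MDC result}~(1), the trivial-group case. Where you genuinely diverge is in the choice of approximating data: the paper never introduces sofic equivalence relations, and instead feeds Lemma~\ref{lem:Approximation Lemma} case by case --- F{\o}lner exhaustions of the amenable quotient in (3), averaging over the finite kernel in (4), stabilization of supports for colimits in (5), and approximation by cylinder functions for Bernoulli actions in (6). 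Your unified sofic scheme would subsume these cases at the price of leaving the general statement conditional on universal soficity, which is open. Two points to make precise if you pursue it: the sofic approximations must carry $\bbZ\calr$ to honest integer matrices, so they must be built from partial permutations rather than arbitrary trace-preserving maps into $M_{k_n}(\bbC)$; and your phrase ``sum of its principal minors of size $\operatorname{rank}B$'' should refer to the principal minors of $B^\ast B$ (equivalently, by Cauchy--Binet, the sum of squares of all $r\times r$ minors of $B$, where $r=\operatorname{rank}B$). With those caveats your proposal is a sound program, and it correctly identifies why the statement remains a conjecture rather than a theorem.
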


The following theorem, actually a stronger version thereof, is proved
in Section~\ref{sec:MDC}.

\begin{theorem}\label{thm:intro thm about MDC} 
  Let $G$ be a countable group.
  \begin{enumerate}
  \item Countable amenable groups satisfy MDC.
  \item If $G$ satisfies MDC, then every subgroup of $G$ satisfies
    MDC.
  \item If $G$ satisfies MDC, then every amenable extension of $G$
    satisfies MDC.
  \item Let $1\rightarrow K\rightarrow G\rightarrow Q\rightarrow 1$ be
    a group extension such that $K$ is finite. If $G$ satisfies MDC,
    then $Q$ satisfies MDC.
  \item Assume that $G = \colim_{i \in I} G_i$ where $(G_i)_{i\in I}$
    is a directed system of countable groups (whose structure maps are not necessarily injective). If every $G_i$ satisfies
    MDC, then $G$ satisfies MDC.
  \item Bernoulli actions of countable residually amenable groups
    satisfy MDC.
  \end{enumerate}
\end{theorem}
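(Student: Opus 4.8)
The plan is to isolate one technical engine — a measure-theoretic analogue of L\"uck's approximation theorem for Fuglede--Kadison determinants — and then realise each of (1)--(6) as an instance of it. Concretely, I would first prove the following \emph{approximation lemma}: if $A_m$ are integral matrices of a fixed size over measured equivalence relations $\calr_m$ with a uniform operator-norm bound $\norm{A_m}\le K$ and with $\determ_{\caln(\calr_m)}(A_m)\ge 1$, and if their $\ast$-distributions converge to that of an $A\in M(m\times n,\bbZ\calr)$ (i.e.\ $\tr_{\caln(\calr_m)}\bigl((A_m^\ast A_m)^j\bigr)\to\tr_{\caln(\calr)}\bigl((A^\ast A)^j\bigr)$ for all $j$), then $\determ_{\caln(\calr)}(A)\ge 1$. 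Writing $\log\determ_{\caln(\calr)}(A)=\int_{0^+}^{K}\log\lambda\,dF(\lambda)$ for the spectral density function $F$ of $\abs{A}$, the only danger is an escape of spectral mass to $0$; this is controlled by the integrality of the entries exactly as in L\"uck's argument: by the Cauchy--Binet formula the product of the nonzero eigenvalues of $A_m^\ast A_m$ is a positive integer, so, together with $\norm{A_m}\le K$, one gets a uniform bound $F_m(\lambda)-F_m(0)\le C/\log(1/\lambda)$ on the number of small eigenvalues. This same Cauchy--Binet input gives the \emph{base case}: over a \emph{finite} measured equivalence relation the determinant decomposes as a fibrewise integral of $\log$ of finite integral determinants, each $\ge 1$, so finite relations satisfy MDC.

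With this in hand, parts (1)--(4) become structural. For (1) the orbit relation of an amenable group is hyperfinite by Connes--Feldman--Weiss, hence an increasing union $\calr=\overline{\bigcup_m\calr_m}$ of finite subrelations; the trace-preserving conditional expectations $E_m\colon\caln(\calr)\to\caln(\calr_m)$ (restriction of functions to $\calr_m$) are contractions, $E_m(A)\to A$ in $L^2$, and $\determ_{\caln(\calr_m)}(E_m(A))\ge 1$ by the base case, so the approximation lemma applies. For (2) I would co-induce an essentially free $H$-action $H\action X$ to an essentially free standard $G$-action on $\Omega=\mathrm{CoInd}_H^GX$; the measure-preserving $H$-map $\Omega\to X$ together with the inclusion $\calr_{H\action\Omega}\subset\calr_{G\action\Omega}$ yields a trace- and $\bbZ$-preserving $\ast$-homomorphism $\bbZ\calr_{H\action X}\to\bbZ\calr_{G\action\Omega}$, and since the Fuglede--Kadison determinant depends only on the $\ast$-distribution it is preserved, so MDC for $G$ descends to $H$. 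Part (3) is the relative version of (1): for $1\to G\to\Gamma\to\Gamma/G\to1$ with $\Gamma/G$ amenable, restricting an essentially free $\Gamma$-action to $G$ gives $\calr_G\subset\calr_\Gamma$ with $\calr_G$ satisfying MDC, and a F{\o}lner exhaustion of $\Gamma/G$ writes $\calr_\Gamma=\overline{\bigcup_m\calr_m}$ with each $\calr_m$ a finite-index extension of $\calr_G$; the identification $\caln(\calr_m)\cong M_{d_m}(\caln(\calr_G))$ turns integral matrices over $\calr_m$ into integral matrices over $\calr_G$ without changing the determinant (matrix amplification), giving $\determ\ge 1$ on each $\calr_m$ and then the lemma. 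Part (4) uses the same finite-index bookkeeping: realising the free action $Q\action X$ inside the free action $G\action X\times Z$ with $Z$ a free Bernoulli $G$-space, finiteness of $K$ lets one match the Fuglede--Kadison determinant of an integral matrix over $\calr_{Q\action X}$ with that of an associated integral matrix over the free relation $\calr_{G\action X\times Z}$, which is $\ge 1$ by MDC for $G$.

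The main obstacle is part (5), the colimit over a directed system whose structure maps $\phi_i\colon G_i\to G$ need not be injective. Here I would use that $\calr_{G\action X}=\overline{\bigcup_i\calr_i}$ with $\calr_i=\calr_{\phi_i(G_i)\action X}$, and feed $A_i:=E_i(A)$ into the approximation lemma; the norm bound and $L^2$-convergence are automatic. What is delicate is producing, for each $i$, an integral matrix over a \emph{free} $G_i$-relation whose $\ast$-distribution matches (in the limit) that of $A_i$, so that MDC for $G_i$ can be invoked. A naive section $\phi_i(G_i)\to G_i$ fails to match higher moments, because the $\ker\phi_i$-valued cocycle changes which group-words close up. The resolution is genuinely a diagonal/soficity argument: each moment $\tr\bigl((A_i^\ast A_i)^j\bigr)$ involves only finitely many group elements, and since $G=\colim_iG_i$ the map $\phi_i$ is injective on any prescribed finite set for $i$ large; thus the free-$G_i$ models match the target distribution up to order $j$ once $i\ge i(j)$, and the approximation lemma closes the argument. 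Controlling this $\ast$-distribution convergence uniformly enough to combine with the near-zero spectral estimate is, I expect, the hardest point of the whole theorem.

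Finally, for (6) I would exploit that Bernoulli measures are compatible across quotients. Given residually amenable $G$ with normal subgroups $N_m$, $\bigcap_mN_m=\{1\}$ and $Q_m=G/N_m$ amenable, the $Q_m$-Bernoulli relation $\calr_{Q_m\action B^{Q_m}}$ satisfies MDC by part (1). For a matrix $A$ over $\calr_{G\action B^G}$ — after reducing, via conditional expectations, to entries supported on finitely many group elements and depending on finitely many coordinates — each moment $\tr\bigl((A^\ast A)^j\bigr)$ is a sum over closed $G$-words of Bernoulli integrals that depend only on the pattern of coincidences among finitely many shifted coordinates. Because $\phi_m\colon G\to Q_m$ is injective on the relevant finite set for $m$ large, these coincidence patterns, and hence the integrals and the closed-word count, are reproduced exactly by the corresponding matrix $A_m$ over $\calr_{Q_m\action B^{Q_m}}$; this distributional compatibility is special to Bernoulli actions. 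Thus $\determ_{\caln(\calr_{Q_m\action B^{Q_m}})}(A_m)\ge 1$ and the $\ast$-distributions of $A_m$ converge to that of $A$, so the approximation lemma yields $\determ_{\caln(\calr_{G\action B^G})}(A)\ge 1$.
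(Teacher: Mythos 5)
Your overall architecture --- a single approximation lemma (uniform norm bound, determinants $\ge 1$, convergence of normalized traces of powers) driving everything, coinduction for subgroups, F{\o}lner compression for amenable extensions, lifting along structure maps for colimits, and finite-coordinate approximation for Bernoulli shifts --- is exactly the paper's. Your route to (1) via Connes--Feldman--Weiss hyperfiniteness and conditional expectations onto finite subrelations is genuinely different from the paper, which proves MDC for the trivial group by pointwise diagonalization over $L^\infty(X)$ and then obtains amenable groups from the amenable-extension step; your route is valid and arguably more intrinsically measure-theoretic. The genuine gap is in part (4). The step you describe as ``finiteness of $K$ lets one match the Fuglede--Kadison determinant of an integral matrix over $\calr_{Q\action X}$ with that of an associated integral matrix over the free relation'' is precisely where the difficulty sits. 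The natural integral lift of $a=\sum_q\lambda_q\cdot q$ along $p\colon G\to Q$ is $s(a)=\sum_g\lambda_{p(g)}\cdot g$, and $r_{s(a)}$ acts as $|K|$ times the restricted operator on the $K$-invariants and as $0$ on their orthogonal complement; together with the trace normalization $\tr(\res_p f)=|K|^{-1}\tr(f)$, applying MDC for $G$ to the integral matrix $|K|\cdot I_d+s(A-I_d)$ yields only $\determ_{\caln(Q\action X)}(A)\ge |K|^{-d}$, not $\ge 1$. The paper removes this defect by running the estimate for $A^n$ and using multiplicativity, $\determ(A)^n=\determ(A^n)\ge |K|^{-d|K|}$, hence $\determ(A)\ge |K|^{-d|K|/n}\to 1$. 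Without this power trick (or an equivalent device) your sketch of (4) does not close.

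Two smaller points. In (3), for a F{\o}lner set $Z_m\subset G/H$ that is not a subgroup there is no intermediate equivalence subrelation $\calr_m$ with $\caln(\calr_m)\cong M_{d_m}(\caln(\calr_H))$; what actually works (and is what the paper does) is to compress $r_A$ by the projections onto the closed span of $p^{-1}(Z_m)$, which are not $L^\infty(X)\ast G$-equivariant but are $L^\infty(X)\ast H$-equivariant, producing honest integral matrices of size $d\cdot|Z_m|$ over $L^\infty(X,\Z)\ast H$. This is the correct reading of your ``matrix amplification,'' and normality of $H$ is essential for the equivariance of these projections (the paper points out that published proofs of the non-normal case are incorrect for exactly this reason). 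In (5), the $G_i$-action on $X$ through the structure map is not free, so before invoking MDC for $G_i$ one needs the reduction to non-free actions by passing to the product with a free Bernoulli $G_i$-space --- the device you mention only in (4); with that added, your treatment of (5), and likewise of (2) and (6), matches the paper's.
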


\begin{theorem}\label{thm:MDC implies det class for ME groups}
  Let $G$ and $H$ be measure equivalent groups. If $G$ satisfies MDC,
  then $H$ satisfies the determinant conjecture.
\end{theorem}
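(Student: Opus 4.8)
The plan is to deduce the ordinary determinant conjecture for $H$ from MDC for a single, conveniently chosen essentially free standard action of $H$, namely the one furnished by the coupling. First I record the elementary but essential reduction: if $H\action(Z,\nu)$ is any essentially free standard action with orbit equivalence relation $\calr_H$, then sending $\sum_h a_h h$ to the function $(z,hz)\mapsto a_h$ defines a trace-preserving, unital $\ast$-embedding $\caln(H)\hookrightarrow\caln(\calr_H)$ carrying $\bbZ H$ into $\bbZ\calr_H$. Because this inclusion preserves the trace, the spectral measure of $\abs{A}$ for a matrix $A\in M(m\times n,\bbZ H)$ is computed identically in either algebra, so $\determ_{\caln(H)}(A)=\determ_{\caln(\calr_H)}(A)$. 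Hence it suffices to exhibit one action of $H$ whose orbit relation satisfies MDC.

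Next I extract the two relations from a coupling $(\Omega,\mu)$ with $G$-fundamental domain $X$ and $H$-fundamental domain $Y$; after enlarging the coupling if necessary I may assume $G\times H$ acts essentially freely on $\Omega$. Identifying $X\cong\Omega/G$, the residual $H$-action makes $X$ a standard $H$-space with orbit relation $\calr_H$, and likewise $Y\cong\Omega/H$ carries a standard $G$-action with orbit relation $\calr_G$. Since $G$ satisfies MDC and $G\action Y$ is essentially free and standard, $\calr_G$ satisfies MDC. Both $\calr_H$ and $\calr_G$ are restrictions of the orbit relation $\calr$ of $G\times H$ on $\Omega$ to the complete, finite-measure subsets $X$ and $Y$ (each meets a.e. $\calr$-class, being a fundamental domain for a subgroup); equivalently, $\calr_G$ and $\calr_H$ are stably orbit equivalent. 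Thus the theorem reduces to showing that MDC is invariant under restriction to complete subsets, in both directions.

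The conceptual key is that the inequality $\determ_{\caln(\cdot)}\ge1$ is insensitive to rescaling the trace by a positive constant, since it records only the sign of $\log\determ$. Concretely, for a complete subset $A$ one has $\caln(\calr|_A)=p_A\caln(\calr)p_A$ with $p_A=\mathbf 1_A$ and $\tr_{\caln(\calr)}(p_A)=\mu(A)$; for an integral matrix supported in the $A$-corner the nonzero part of the spectral measure of $\abs{\cdot}$ agrees over $\calr$ and over $\calr|_A$ up to the factor $\mu(A)$, whence $\log\determ_{\caln(\calr)}=\mu(A)\cdot\log\determ_{\caln(\calr|_A)}$ on such matrices. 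This gives the easy direction at once: if $\calr$ satisfies MDC then so does every restriction $\calr|_A$, because an integral matrix over $\calr|_A$, extended by zero, is an integral matrix over $\calr$ supported in the $A$-corner.

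The main obstacle is the opposite, amplification direction: deducing MDC for the full relation $\calr_H$ from MDC for a restriction $\calr_H|_B\cong\calr_G|_A$. Here one must move a general integral matrix over $\bbZ\calr_H$ --- which is not supported in any single corner --- into the integral groupoid ring of the restriction. The plan is to fix a partition $X=\bigsqcup_i B_i$ together with partial isomorphisms $\phi_i\in[[\calr_H]]$ carrying each $B_i$ onto a subset of $B$, yielding an identification of $\caln(\calr_H)$ with an amplification $\caln(\calr_H|_B)^{t}$, $t=\mu(X)/\mu(B)$, that is, with a full corner of some $M_k(\caln(\calr_H|_B))$. The crucial point to verify is that, because the $\phi_i$ have $\{0,1\}$-valued graphs lying in the integral groupoid ring, the blocks $\phi_i C\phi_j^\ast$ of any $C\in M(m\times n,\bbZ\calr_H)$ again lie in $\bbZ(\calr_H|_B)$, so that $C$ corresponds to an honest integral matrix over the restriction. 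The Fuglede--Kadison determinant transports through this amplification up to the positive scaling factors introduced by the corner and by the matrix size, and by the scaling-insensitivity noted above the inequality $\determ\ge1$ is preserved. Granting this amplification lemma, MDC for $\calr_G$ yields MDC for $\calr_G|_A\cong\calr_H|_B$ by the easy direction, and then MDC for $\calr_H$ by the hard direction; the reduction of the first paragraph delivers the determinant conjecture for $H$. I expect the book-keeping in the amplification step --- keeping the partial isomorphisms and the resulting matrix-unit system inside the integral groupoid ring, and tracking the determinant through the non-integer index $t$ --- to be the principal technical difficulty.
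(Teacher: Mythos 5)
Your proposal follows essentially the same route as the paper: reduce the determinant conjecture for $H$ to MDC for a single essentially free action via the trace-preserving inclusion $\caln(H)\hookrightarrow\caln(\calr_H)$, pass through the (stable/weak) orbit equivalence furnished by the coupling, and transport MDC across the corner $\chi_B\bbZ\calr_H\chi_B$ in both directions using the determinant-scaling formula for corners together with fullness of $\chi_B$ witnessed by partial isometries with integral graphs (this is Lemma~\ref{lem:MDC for projective modules and groupoid ring} combined with the fullness argument in the paper's proof). The one point you leave open --- that \emph{finitely} many partial isomorphisms $\phi_i$ suffice, so that the amplification lands in a finite matrix algebra $M_k(\caln(\calr_H|_B))$ --- is exactly where the paper first reduces to ergodic actions via Furman's lemma and then uses ergodicity to construct the $\phi_i$; with that supplied, your argument coincides with the paper's.
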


The preceding theorem is proved in Section~\ref{sec:uniform measure
  equivalence invariance}. In the same section we also prove the
following theorem as a first step towards Conjecture~\ref{con:ultimate
  conjecture}. The conclusion about the vanishing of the $L^2$-Betti
numbers of $H$ below is of course due to the corresponding theorem of
Gaboriau~\cite{gaboriau-main}, which we built in for a clean
formulation.

\begin{theorem}\label{thm:invariance under uniform ME}
  Let $G$ and $H$ be groups that admit finite CW-models for their
  classifying spaces. Assume that all the $L^2$-Betti numbers of $G$
  vanish and $G$ satisfies MDC.  If $G$ and $H$ are uniformly measure
  equivalent with index $c=I(G,H)>0$, then all the $L^2$-Betti numbers
  of $H$ vanish, $H$ satisfies the determinant conjecture, and
  \[
  \torsion(G)=c\cdot\torsion(H).
  \]
\end{theorem}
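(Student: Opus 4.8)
The plan is to deduce the auxiliary conclusions from results already available and then to prove the torsion identity by transporting both sides to a single measured groupoid attached to the coupling. Since uniform measure equivalence implies measure equivalence, Gaboriau's theorem~\cite{gaboriau-main} shows that all $L^2$-Betti numbers of $H$ vanish, and Theorem~\ref{thm:MDC implies det class for ME groups} shows that $H$ satisfies the determinant conjecture; together with the finite model for $BH$ this makes $\torsion(H)$ well defined, so only the identity remains. Fix a coupling $(\Omega,\mu)$ realizing the uniform measure equivalence, with $G$-fundamental domain $X$ and $H$-fundamental domain $Y$, normalized so that $\mu(Y)=1$ and hence $\mu(X)=c$. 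The commuting actions descend to essentially free standard actions $G\action(Y,\mu|_Y)$ and $H\action(X,\mu|_X)$ via the identifications $Y\cong\Omega/H$ and $X\cong\Omega/G$; write $\calr_Y$ and $\calr_X$ for their orbit equivalence relations and $\calg$ for the measured groupoid on $\Omega$ generated by $G\times H$, so that $\calr_Y=\calg|_Y$ and $\calr_X=\calg|_X$ are reductions to complete sections.

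First I would pass from group torsion to relation torsion. The finite free $G$-CW-structure on $EG$ gives a finite free $\bbZ G$-chain complex $C_\ast(EG)$; inducing along the trace-preserving integral inclusion $\bbZ G\hookrightarrow\bbZ\calr_Y$ (a group element goes to its graph kernel) yields a finite free $\bbZ\calr_Y$-complex $C^G_\ast$. As this inclusion preserves the trace, it preserves all moments $\tr((A^\ast A)^k)$ of the differentials, hence their spectral measures, and therefore the $L^2$-Betti numbers and the Fuglede--Kadison determinants; thus $C^G_\ast$ is $L^2$-acyclic of determinant class with $\torsion(\calr_Y;C^G_\ast)=\torsion(G)$ (using $\mu(Y)=1$). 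Inducing $C_\ast(EH)$ along $\bbZ H\hookrightarrow\bbZ\calr_X$ likewise produces $C^H_\ast$; here the inclusion multiplies the normalized $H$-trace by $\mu(X)=c$, so the Fuglede--Kadison determinants of the differentials pick up the exponent $c$ and $\torsion(\calr_X;C^H_\ast)=c\cdot\torsion(H)$.

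Next I would compare the two relations inside $\calg$. The uniformity conditions $gY\subset H(g)Y$ and $hX\subset G(h)X$ with $H(g),G(h)$ finite force the relevant cocycles to take only finitely many values, so every map realizing the weak orbit equivalence between $\calr_Y$ and $\calr_X$ is given by a finite-support kernel, i.e.\ lies in the \emph{integral} groupoid ring $\bbZ\calg$ and not merely in $\caln(\calg)$. Viewing $C^G_\ast$ and $C^H_\ast$ as finite free, $L^2$-acyclic complexes over $\bbZ\calg$ (both induced from finite classifying spaces), the coupling then yields an explicit $\bbZ\calg$-chain homotopy equivalence $f\colon C^G_\ast\to C^H_\ast$, whose Whitehead torsion $\tau(f)\in K_1(\bbZ\calg)$ is represented by a square matrix $u$ over $\bbZ\calg$ whose two-sided inverse $u^{-1}$ is again over $\bbZ\calg$.

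Finally I would assemble the computation. Invariance of $L^2$-torsion under chain homotopy equivalence gives $\torsion(\calr_X;C^H_\ast)-\torsion(\calr_Y;C^G_\ast)=\ln\determ_{\caln(\calg)}(u)$. Because $\calr_Y=\calg|_Y$ is the orbit relation of an essentially free standard $G$-action and $G$ satisfies MDC — and $Y$ is a complete section, so the estimate transports to $\bbZ\calg$ — MDC applies to the integral matrices $u$ and $u^{-1}$, yielding $\determ_{\caln(\calg)}(u)\ge1$ and $\determ_{\caln(\calg)}(u^{-1})\ge1$. Since the Fuglede--Kadison determinant is multiplicative on invertibles, $\determ_{\caln(\calg)}(u)\,\determ_{\caln(\calg)}(u^{-1})=1$, forcing $\determ_{\caln(\calg)}(u)=1$ and $\ln\determ_{\caln(\calg)}(u)=0$; hence $\torsion(G)=\torsion(\calr_Y;C^G_\ast)=\torsion(\calr_X;C^H_\ast)=c\cdot\torsion(H)$. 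The main obstacle is the construction in the previous paragraph: producing the finite-support, \emph{integral} chain homotopy equivalence $f$ over the coupling groupoid, which amounts to a common finite geometric model for the two classifying spaces with all chain-level data kept inside $\bbZ\calg$. This is exactly where uniform — rather than mere — measure equivalence is indispensable: without it the comparison would only live in $\caln(\calg)$, and the failure of weak continuity of the Fuglede--Kadison determinant discussed in Section~\ref{sec:caveat} would obstruct the final determinant computation.
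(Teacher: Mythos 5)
Your overall strategy is the same as the paper's: derive the vanishing of the $L^2$-Betti numbers and the determinant conjecture for $H$ from the already-established results, induce the two cellular chain complexes to integral (crossed product/groupoid) rings where the torsions become $\torsion(G)$ and $c\cdot\torsion(H)$ up to the measure normalization, produce an \emph{integral} chain homotopy equivalence between them, and kill its contribution by applying MDC to an invertible integral matrix \emph{and} to its inverse, using multiplicativity of the Fuglede--Kadison determinant to force the value $1$. The endgame (your $K_1$-representative $u$ with $u^{-1}$ integral) is exactly the paper's $(c_*+\delta_*)_{\odd}$ coming from a chain contraction of the mapping cone, fed into Lemma~\ref{lem-acyclic} and Proposition~\ref{prop-cone}.

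However, there is a genuine gap at precisely the step you yourself flag as ``the main obstacle'': you never construct the integral chain homotopy equivalence, and the construction is the substance of the proof, not a technicality. The paper gets it as follows. By Theorem~\ref{thm:ME implies OE} one passes to uniformly weakly orbit equivalent actions $G\action X$, $H\action Y$ with $f\colon B\to A$. Flatness of $L^\infty(X,\Z)*G$ over $\Z G$ (Lemma~\ref{lem:flat}) makes $\chi_A L^\infty(X,\Z)*G\otimes_{\Z G}C^{\cell}_*(\widetilde{Z_G})$ a finite \emph{based projective resolution} of $L^\infty(A,\Z)$ over the corner ring $\chi_AL^\infty(X,\Z)*G\chi_A$ (fullness of $\chi_A$, Lemma~\ref{lem:full idempotent}); uniformity enters through Lemma~\ref{lem:uniform OE induces ring iso}(2), which says the orbit-equivalence isomorphism of groupoid rings \emph{restricts} to an isomorphism of the corner crossed product rings, so that $f_*(C(G)_*)$ and $C(H)_*$ are two finite projective resolutions of the \emph{same} module over the \emph{same} ring $\chi_BL^\infty(Y,\Z)*H\chi_B$; the fundamental lemma of homological algebra then hands you the homotopy equivalence. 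Your variant over the full coupling groupoid ring $\bbZ\calg$ does not come with this for free: you would need the induced complexes to remain resolutions after tensoring up to $\bbZ\calg$, i.e.\ flatness of $\bbZ\calg$ over the crossed product rings, which is neither proved in the paper nor obvious; and $\caln(\calg)$ for the full groupoid on the infinite-measure space $\Omega$ is not a finite von Neumann algebra, so $\determ_{\caln(\calg)}$ is not defined by the machinery used here --- you must cut down to finite-measure complete sections, which is exactly the corner-ring formulation the paper adopts. Finally, ``the cocycles take finitely many values, so the comparison maps have finite support'' explains why the \emph{rings} match up under uniformity, but it is not by itself an argument that a chain homotopy equivalence with integral, finitely supported entries exists.
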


Example~\ref{exa:examples of UME} yields the following corollary:

\begin{corollary}\label{cor:qi invariance}
  Let $G$ and $H$ be amenable groups that admit finite CW-models for
  their classifying spaces. If $G$ and $H$ are quasi-isometric, then
  \[ \torsion(G)=0\iff\torsion(H)=0.\]
\end{corollary}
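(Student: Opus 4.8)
The plan is to reduce the statement entirely to Theorem~\ref{thm:invariance under uniform ME}, whose hypotheses are almost completely packaged for us once we know that $G$ and $H$ are uniformly measure equivalent. So the real work is just to verify those hypotheses for $G$ and to feed in the right index.

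First I would record the structural consequences of the standing assumptions. Since $G$ and $H$ admit finite CW-models for their classifying spaces, both are finitely presented, in particular finitely generated. Moreover a nontrivial finite group has infinite cohomological dimension and hence admits no finite model for its classifying space; thus $G$ and $H$ are either trivial (in which case $\torsion(G)=\torsion(H)=0$ and there is nothing to prove) or infinite. Being infinite and amenable, $G$ has vanishing $L^2$-Betti numbers by the theorem of Cheeger--Gromov, and by Theorem~\ref{thm:intro thm about MDC}(1) the amenable group $G$ satisfies MDC. In particular, applying Theorem~\ref{thm:MDC implies det class for ME groups} to the self-coupling of $G$ shows that $G$ satisfies the ordinary determinant conjecture, so that $\torsion(G)$ is in fact defined.

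Next I would produce the uniform measure coupling. As $G$ and $H$ are finitely generated amenable groups that are quasi-isometric by hypothesis, Example~\ref{exa:examples of UME}(2) guarantees that they are uniformly measure equivalent of some index $c=I(G,H)$, which is positive by the very definition of a measure coupling. Now all hypotheses of Theorem~\ref{thm:invariance under uniform ME} are verified for $G$: a finite classifying space, vanishing $L^2$-Betti numbers, MDC, and uniform measure equivalence to $H$ of index $c>0$. That theorem therefore applies and yields $\torsion(G)=c\cdot\torsion(H)$, together with the fact that $\torsion(H)$ is defined (the $L^2$-Betti numbers of $H$ vanish and $H$ satisfies the determinant conjecture). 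Since $c>0$, the equation $\torsion(G)=c\cdot\torsion(H)$ immediately gives $\torsion(G)=0\iff\torsion(H)=0$, which is the assertion.

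As for the main obstacle, there is essentially none at this level, since the substantive content has already been established in the preceding theorems. The only points demanding any care are bookkeeping ones: checking that a finite classifying space forces finite generation (so that Example~\ref{exa:examples of UME}(2) becomes applicable), and that amenability simultaneously supplies the vanishing of the $L^2$-Betti numbers and MDC. In particular, the potentially delicate question of whether $\torsion(G)$ and $\torsion(H)$ are even defined is resolved automatically by the hypotheses and the conclusion of Theorem~\ref{thm:invariance under uniform ME}, rather than requiring a separate argument.
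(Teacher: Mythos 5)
Your proposal is correct and follows exactly the route the paper intends: the paper derives the corollary directly from Example~\ref{exa:examples of UME}(2) (quasi-isometric finitely generated amenable groups are uniformly measure equivalent) combined with Theorem~\ref{thm:intro thm about MDC}(1) and Theorem~\ref{thm:invariance under uniform ME}, which is precisely your reduction. The only cosmetic quibble is the trivial-group edge case, where $\torsion(G)$ is strictly speaking undefined (since $b_0^{(2)}=1$) rather than zero, but this does not affect the argument.
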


We emphasize that there is the conjecture that the $L^2$-torsion
vanishes for all infinite amenable groups.  In~\cite{wegner} it is shown that
$\torsion(G)=0$ if $G$ contains an infinite elementary amenable
normal subgroup and has a finite model of its classifying space. 

\section{Background in $L^2$-invariants and orbit equivalence
  relations}\label{sec:background}

\subsection{Uniform orbit equivalence}\label{sub:bounded OE}

The notion of \emph{orbit equivalence} has its roots in the pioneering
work of Dye~\cites{dye1,dye2}. We recall it here:

\begin{definition}\label{def:orbit equivalence}
  Two standard actions $G \curvearrowright (X,\mu_X)$ and $H
  \curvearrowright (Y,\mu_Y)$ are called \emph{weakly orbit equivalent
    with index $c=I(G,H)>0$ } if there are measurable subsets $A
  \subset X$, $B \subset Y$ and a measurable isomorphism $f: A \to B$
  such that
  \begin{enumerate}
  \item $G\cdot A=X$ and $H\cdot B=Y$ up to null sets,
  \item $\frac{1}{\mu_X(A)}f_\ast(\mu_X\vert_A)=
    \frac{1}{\mu_Y(B)}\mu_Y\vert_B$,
  \item $f(G \cdot x \cap A) = H \cdot f(x) \cap B$ for a.e.~$x\in A$,
    and
  \item $c=\mu_X(A)/\mu_Y(B)$.
  \end{enumerate}
  If $\mu_X(A)=\mu_Y(B)=1$, then we call the actions \emph{orbit
    equivalent}. The map $f$ is called a \emph{weak orbit equivalence}
  or \emph{orbit equivalence}, respectively.
\end{definition}

The following modification of orbit equivalence was introduced
in~\cite{sauer-promotion} (under the name \emph{bounded orbit
  equivalence}).

\begin{definition}
  Two standard actions $G \curvearrowright (X,\mu_X)$ and $H
  \curvearrowright (Y,\mu_Y)$ are called \emph{uniformly weakly orbit
    equivalent with index $c=I(G,H)$} if there exists a weak orbit
  equivalence $f:A\rightarrow B$ of index $c$ as in
  Definition~\ref{def:orbit equivalence} that satisfies the following
  additional properties:
  \begin{enumerate}
  \item There exist finite subsets $F_A\subset G$ and $F_B\subset H$
    such that $F_A \cdot A=X$ and $F_B \cdot B=Y$ up to null sets,
  \item For every $g\in G$ there is a finite subset $F(g)\subset H$
    such that $f(gx) \in F(g) \cdot f(x)$ for a.e.~$x \in A \cap
    g^{-1} \cdot A$.
  \item For every $h\in H$ there is a finite subset $F(h)\subset G$
    such that $f^{-1}(hy) \in F(h) \cdot f^{-1}(y)$ for a.e.~$y \in B
    \cap h^{-1} \cdot B$.
  \end{enumerate}
\end{definition}

The following theorem is proved in~\cite{Fur2}*{Theorem~3.3} (see
also~\cite{sauer-promotion}*{Theorem~2.33} for the uniform version).

\begin{theorem}\label{thm:ME implies OE}
  Two countable groups $G$ and $H$ are (uniformly) measure equivalent
  with respect to a measure coupling of index $c>0$ if and only if
  there exist essentially free, standard actions $G$ and $H$ that are
  (uniformly) weakly orbit equivalent of index $c$.
\end{theorem}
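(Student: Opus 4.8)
The plan is to prove both implications and to treat the basic statement and its uniform refinement in parallel, keeping track at each step of the relevant finite subsets of $G$ and $H$. The conceptual point throughout is that a measure coupling and a pair of weakly orbit equivalent actions are two descriptions of one object: the orbit equivalence relation of $G\times H$ on the coupling space, viewed through two different complete sections.

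First I would show that a measure coupling yields weakly orbit equivalent actions. Given a coupling $(\Omega,\mu)$ with $G$-fundamental domain $X$ and $H$-fundamental domain $Y$, the commuting actions induce an essentially free, standard action $H\action X$ on $X\cong\Omega/G$ (send $(h,x)$ to the unique $G$-translate of $hx$ lying in $X$) and, symmetrically, $G\action Y$ on $Y\cong\Omega/H$, after normalizing the finite measures to probability measures. Since $X$ and $Y$ are each complete measurable sections of the single relation $\calr:=\calr_{G\times H}$ on $\Omega$, the restrictions $\calr|_X$ and $\calr|_Y$ are exactly the orbit relations of $H\action X$ and $G\action Y$. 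Writing $\calr$ as the orbit relation of a countable group (Feldman--Moore, as quoted above) and running a measurable-selection and exhaustion argument, I would produce Borel subsets $A\subseteq X$, $B\subseteq Y$ and a Borel isomorphism $f\colon A\to B$ whose graph lies inside $\calr$ (so $x$ and $f(x)$ lie in the same $G\times H$-orbit) and for which $A$ and $B$ are again complete sections. Because $f$ stays inside $\calr$-classes it automatically conjugates $\calr|_A$ to $\calr|_B$, so $f$ is a weak orbit equivalence, and the index is dictated by the ratio $\mu(X)/\mu(Y)=c$ after the standard normalization of the two finite measures.

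In the opposite direction I would reconstruct the coupling from a weak orbit equivalence $f\colon A\to B$. Using essential freeness of $H\action Y$ together with condition~(3) of Definition~\ref{def:orbit equivalence}, the relation $f(gx)\in H\cdot f(x)$ defines a measurable cocycle $\alpha\colon G\times A\to H$ by $f(gx)=\alpha(g,x)f(x)$. I would then form $\Omega:=X\times H$ with the measure $\mu_X\times(\text{counting})$, let $H$ act by translation on the second factor (for which $X\times\{e\}$ is a fundamental domain), and let $G$ act through $\alpha$ on the first factor; the cocycle identity makes this a commuting $G\times H$-action, and transporting the fundamental structure of $Y$ through $f$ produces a finite-measure $G$-fundamental domain whose measure ratio is again $c$. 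For a weak, rather than genuine, orbit equivalence one first induces up along the inclusions $A\subseteq X$ and $B\subseteq Y$ to absorb the sets and the index factor; this is routine cocycle bookkeeping once $\alpha$ is in hand.

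The main obstacle is the uniform refinement, that is, matching the finite-subset conditions on the two sides. In the coupling-to-orbit-equivalence direction one must check that the hypotheses $gY\subseteq H(g)Y$ and $hX\subseteq G(h)X$ force the cocycle $\alpha$ to take only finitely many values for each fixed group element, which is precisely what conditions~(1)--(3) of uniform weak orbit equivalence encode; conversely, these finiteness properties of $\alpha$ must be shown to bound the number of $H$-translates of $Y$ that meet $gY$. Keeping the selection map $f$ compatible with this boundedness, so that the measurable choices introduce no infinitely supported behaviour, is the delicate point, and it is here that the argument genuinely goes beyond the non-uniform version of Furman~\cite{Fur2}*{Theorem~3.3}.
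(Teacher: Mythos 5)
The paper does not prove this statement at all: it is imported from Furman (Theorem~3.3 of \emph{Orbit equivalence rigidity}) and, for the uniform refinement, from Sauer's dissertation (Theorem~2.33 there). Your outline follows the same standard route as those sources: in one direction the two fundamental domains are treated as complete sections of the $G\times H$-orbit relation on $\Omega$ and a partial isomorphism between them is produced by measurable selection; in the other direction the orbit-equivalence cocycle $\alpha$ with $f(gx)=\alpha(g,x)f(x)$ is used to build the coupling $X\times H$, and the uniformity conditions are matched with the finiteness of the value sets of $\alpha$ and of the inverse cocycle. The architecture is correct and agrees with the cited proofs, though several steps (the exhaustion producing $f\colon A\to B$ with complete-section domain and range and the right index, the extension of the partial cocycle from $A$ to $X$, and the uniformity bookkeeping) are only named, not carried out.

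One step as written would fail. You assert that the commuting actions ``induce an essentially free, standard action $H\action X$ on $X\cong\Omega/G$.'' The existence of a fundamental domain forces the restricted actions of $G$ and of $H$ on $\Omega$ to be essentially free, but it does not force the induced $H$-action on $\Omega/G$ to be essentially free: $h$ fixes the class of $\omega$ whenever $(g,h)\omega=\omega$ for some $g\in G$, and Definition~\ref{def:measure equivalence} places no restriction on such mixed stabilizers. A concrete counterexample is $\Omega=G=H$ with counting measure and the left/right translation actions; this is a coupling of index $1$, yet $\Omega/G$ is a single atom and the induced $H$-action is trivial. The standard repair is to replace $\Omega$ by $\Omega\times Z$ for an essentially free probability-measure-preserving $G\times H$-space $Z$ (e.g.\ a Bernoulli shift of $G\times H$); this is again a coupling of the same index and is compatible with the uniformity conditions, and it must be inserted before your first paragraph can get off the ground. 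With that correction your plan is the argument of the cited references.
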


\subsection{Algebras associated to groups and equivalence
  relations}\label{sub: algebras associated to an OE relation}

\begin{definition}\label{def-L-ring}
  Let $G\action (X,\mu)$ be a standard action.  The \emph{crossed
    product ring} $L^\infty(X,\C)*G$ is the free
  $L^\infty(X,\C)$-module with $G$ as $L^\infty(X,\C)$-basis. Here
  $L^\infty(X,\C)$ denotes the ring of equivalence classes of
  essentially bounded measurable functions $X \to \C$. The
  multiplication is given by
  \[
  \big(\sum_{g \in G} r_g \cdot g\big) \cdot \big(\sum_{g \in G} s_g
  \cdot g\big) = \sum_{g \in G} \big(\sum_{\stackrel{\scriptstyle g_1,
      g_2 \in G}{g_1 g_2 = g}} r_{g_1} \cdot (s_{g_2} \circ
  m_{{g_1}^{-1}})\big) \cdot g
  \]
  with $m_g\colon X \to X, x \mapsto gx$.  The crossed product ring
  $L^\infty(X,\Z)*G$ is defined analogously using the ring of
  equivalence classes of essentially bounded measurable functions $X
  \to \Z$.
\end{definition}

\begin{remark}[compare~\cite{sauer-promotion}*{Sections~1.2 and~1.3}]\label{orbit-equivalence-relation}
  Let $G\action (X,\mu)$ be an essentially free, standard action and
  $\calr$ be its orbit equivalence relation.  The rings
  $L^\infty(X,\Z)*G$ and $L^\infty(X,\C)*G$ embed as subrings into
  $\bbZ\calr$ or $\bbC\calr$, respectively. Let
  $i_g\colon X\rightarrow\calr$ be the map $i_g(x)=(x,g^{-1}x)$. We obtain a
  (multiplicative) isomorphism
  \begin{equation*}
    L^\infty(X,\C)*G\cong
    \big\{ f \in\bbC\calr \, \big| \, f \circ i_g = 0 \in L^\infty(X,\C) 
    \text{ for all but finitely many } g \in G \big\}\subset\bbC\calr
  \end{equation*}
  given by $\sum_{g \in G} r_g \cdot g \mapsto f$ with $f(x,g^{-1}x) =
  r_g(x)$.  There is an analogous statement for integral (instead of
  complex) coefficients.  In particular, we obtain a trace-preserving
  inclusion $\bbC G\hookrightarrow\bbC\calr$; this inclusion extends
  to the von Neumann algebras $\caln(G)$ of $G$ and $\caln(\calr)$ of
  $\calr$.
\end{remark}

The following lemmas about the crossed product of a standard action
$G\action X$ are rather easy to verify.

\begin{lemma}\label{lem:flat}
  $L^\infty(X,\Z)*G$ is flat over $\Z G$.
\end{lemma}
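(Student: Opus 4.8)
The plan is to establish flatness for the \emph{right} $\Z G$-module structure, which is the one relevant for inducing chain complexes from $\Z G$ to the crossed product. The key observation is that, although the left $L^\infty(X,\Z)$-module structure and the multiplication of $L^\infty(X,\Z)*G$ are twisted by the translation maps $m_g$, the right $\Z G$-action is \emph{not} twisted. Concretely, I would first check directly from the multiplication formula in Definition~\ref{def-L-ring} that right multiplication by a group element $h\in G$ merely shifts the basis,
\[
\big(\textstyle\sum_{g\in G} r_g\cdot g\big)\cdot(1\cdot h)=\sum_{g\in G} r_g\cdot(gh),
\]
since the second factor $1\cdot h$ carries the constant coefficient $1$, which is fixed by every $m_{g^{-1}}$, so the coefficients $r_g\in L^\infty(X,\Z)$ are left untouched. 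Consequently the assignment $r\otimes g\mapsto r\cdot g$ defines an isomorphism of right $\Z G$-modules
\[
L^\infty(X,\Z)*G\cong L^\infty(X,\Z)\otimes_{\Z}\Z G,
\]
where $\Z G$ acts on the right-hand side through right translation on the second tensor factor.

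Granting this identification, flatness follows from a base-change computation. For an arbitrary left $\Z G$-module $N$, associativity of the tensor product together with the unit isomorphism $\Z G\otimes_{\Z G}N\cong N$ yields natural isomorphisms
\[
\big(L^\infty(X,\Z)\otimes_{\Z}\Z G\big)\otimes_{\Z G}N\cong L^\infty(X,\Z)\otimes_{\Z}\big(\Z G\otimes_{\Z G}N\big)\cong L^\infty(X,\Z)\otimes_{\Z}N,
\]
in which $N$ is regarded on the right merely as an abelian group. Thus the functor $L^\infty(X,\Z)*G\otimes_{\Z G}(-)$ is naturally isomorphic to the composite of restriction of scalars from $\Z G$-modules to abelian groups with the functor $L^\infty(X,\Z)\otimes_{\Z}(-)$.

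It then remains to observe that $L^\infty(X,\Z)$ is flat over $\Z$. Because $\Z$ is a principal ideal domain, flatness is equivalent to being torsion-free, and $L^\infty(X,\Z)$ is torsion-free as an abelian group: if $n\cdot f=0$ in $L^\infty(X,\Z)$ for a nonzero integer $n$, then $f$ vanishes almost everywhere, hence $f=0$. Since restriction of scalars is exact and $L^\infty(X,\Z)\otimes_{\Z}(-)$ is exact by this flatness, the composite functor above is exact, which is precisely the assertion that $L^\infty(X,\Z)*G$ is flat over $\Z G$.

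The only subtlety I anticipate is the asymmetry between the two one-sided module structures: I would not expect the left $\Z G$-module structure to be untwisted, so I deliberately argue on the right, where the identification is transparent. (Should left-flatness ever be required, the involution $f\mapsto f^\ast$ of the groupoid ring restricts to an anti-automorphism interchanging the left and right structures and preserving flatness.) Everything else is routine homological algebra.
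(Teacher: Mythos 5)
Your proposal is correct and follows essentially the same route as the paper: the paper's proof likewise rests on the natural isomorphism $L^\infty(X,\Z)*G \otimes_{\Z G} M \cong L^\infty(X,\Z) \otimes_\Z M$ together with torsion-freeness of $L^\infty(X,\Z)$ over $\Z$. You merely spell out the details the paper leaves implicit, namely the verification that the right $\Z G$-action is untwisted and the identification of the induction functor with $L^\infty(X,\Z)\otimes_\Z(-)$ after restriction of scalars.
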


\begin{proof}
  This follows from the fact that $L^\infty(X,\bbZ)$ is torsionfree
  and the isomorphism
  \[
  L^\infty(X,\Z)*G \otimes_{\Z G} M \cong L^\infty(X,\bbZ) \otimes_\Z
  M.\qedhere
  \]
\end{proof}

Recall that that an idempotent $p$ in a ring $R$ is called \emph{full}
if the additive group generated by elements of the form $rpr'$,
$r,r'\in R$, is $R$. If $p\in R$ is full, then the rings $pRp$ and $R$
are Morita equivalent. This implies \emph{e.g.} that $P$ is a finitely
generated projective $R$-module if and only if $pP$ is a finitely
generated projective $pRp$-module~\cite{lam}*{(18.30B) on
  p.~490}. Note also that if $R\subset S$ is a unital subring and
$p\in R$ is a full idempotent in $R$, then $p$ is also a full
idempotent in $S$.

\begin{remark}\label{rem:restricted groupoid ring}
  Let $\calr$ be the orbit equivalence relation of a standard action
  of $G$ on $(X,\mu)$. Let $A\subset X$ be a subset. We denote the
  restricted equivalence relation by $\calr\vert_A=\calr\cap A\times
  A$. One has
  \[
  \bbZ\calr\vert_A=\chi_A\bbZ\calr\chi_A,
  \]
  where $\chi_A$ is the characteristic function of $A$. Similarly,
  $\caln(\calr\vert_A)=\chi_A\caln(\calr)\chi_A$.
\end{remark}

\begin{lemma}[\cite{sauer-promotion}*{Lemma~4.21}]\label{lem:full idempotent}
  Let $A\subset X$ be a measurable subset such that finitely many
  $G$-translates of $A$ cover $X$ up to null sets. Then the
  characteristic function $\chi_A$ is a full idempotent in
  $\chi_AL^\infty(X,\bbZ)\ast G\chi_A$.
\end{lemma}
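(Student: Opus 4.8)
The plan is to show directly that the two-sided ideal $R\chi_A R$ generated by $\chi_A$ in $R := L^\infty(X,\bbZ)\ast G$ is all of $R$. Since the additive group generated by the elements $r\chi_A r'$, $r,r'\in R$, is precisely this ideal, fullness is equivalent to $1 = \chi_X \in R\chi_A R$. That $\chi_A$ (viewed as $\chi_A\cdot e$) is idempotent is immediate, since $\chi_A^2=\chi_A$ in $L^\infty(X,\bbZ)$; so the whole content lies in producing the unit inside the ideal.

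First I would invoke the hypothesis to fix group elements $g_1,\dots,g_n\in G$ with $\bigcup_{i=1}^n g_iA = X$ up to null sets. The key observation is that conjugating $\chi_A$ by a group element translates its support. Each $g_i=\chi_X\cdot g_i$ is a unit in $R$ with inverse $g_i^{-1}$, and a short computation with the crossed-product multiplication of Definition~\ref{def-L-ring} gives
\[
g_i\,\chi_A\,g_i^{-1} = (\chi_A\circ m_{g_i^{-1}})\cdot e = \chi_{g_iA}\in L^\infty(X,\bbZ)\subset R,
\]
where $\chi_{g_iA}(x)=\chi_A(g_i^{-1}x)$ is the characteristic function of the translate $g_iA$. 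In particular every $\chi_{g_iA}$ lies in the two-sided ideal $R\chi_A R$.

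It then remains to assemble the $\chi_{g_iA}$ into the unit. Since they all lie in the \emph{commutative} subring $L^\infty(X,\bbZ)$ and are the idempotents attached to the measurable sets $g_iA$, inclusion--exclusion expresses the characteristic function of their union as an integer polynomial in them,
\[
\chi_{\bigcup_i g_iA} = \sum_i \chi_{g_iA} - \sum_{i<j}\chi_{g_iA}\chi_{g_jA} + \cdots,
\]
and each monomial is a product of elements of $R\chi_A R$, hence itself lies in $R\chi_A R$. Because $\bigcup_i g_iA = X$ up to null sets, the left-hand side equals $\chi_X = 1$ in $L^\infty(X,\bbZ)$, so $1\in R\chi_A R$ and $\chi_A$ is full. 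The only points requiring care are the bookkeeping in the crossed-product multiplication for the conjugation formula and the fact that the translates $g_iA$ may overlap; the latter is exactly what passing to the commutative subring and using inclusion--exclusion resolves, so I expect no genuine obstacle here.
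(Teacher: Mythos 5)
Your argument is correct. Note that the paper itself gives no proof of this lemma --- it is quoted from \cite{sauer-promotion}*{Lemma~4.21} --- so there is nothing internal to compare against; judged on its own, your proof is complete. You also (rightly) prove the statement in the form in which it is actually used later: literally read, ``full in $\chi_A L^\infty(X,\bbZ)\ast G\,\chi_A$'' is vacuous, since $\chi_A$ is the unit of that corner ring; the content is fullness in $R=L^\infty(X,\bbZ)\ast G$, i.e.\ $R\chi_A R=R$, which is what feeds the Morita equivalence between $R$ and $\chi_A R\chi_A$ in the rest of the paper. Your two ingredients are sound: the conjugation identity $g\,\chi_A\,g^{-1}=\chi_{gA}$ follows directly from the crossed-product multiplication of Definition~\ref{def-L-ring}, and inclusion--exclusion over the commuting idempotents $\chi_{g_iA}$ produces $\chi_X=1$ as an integer polynomial with no constant term in elements of the ideal. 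A marginally slicker finish, which avoids inclusion--exclusion altogether, is to disjointify: choose measurable $B_i\subset g_iA$ partitioning $X$ up to null sets and write $1=\sum_i \chi_{B_i}\,g_i\,\chi_A\,g_i^{-1}\in R\chi_A R$. Either way the lemma holds.
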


\begin{lemma}[\cite{sauer-promotion}*{Lemma~4.23}]\label{lem:uniform OE induces ring iso}
  Let $f:A \to B$ be an orbit equivalence between essentially free,
  standard actions of $G$ on $(X,\mu_X)$ and $H$ on $(Y,\mu_Y)$. Let
  $\calr_1$ and $\calr_2$ be the orbit equivalence relations of
  $G\action X$ and $H\action Y$, respectively.
  \begin{enumerate}
  \item The isomorphism $\calr_1\vert_A\xrightarrow{f\times
      f}\calr_2\vert_B$ induces the trace-preserving
    $\ast$-isomorphism
    \[f^\ast: \bbZ\calr_2\vert_B\rightarrow\bbZ\calr_1\vert_A,~
    \phi\mapsto \phi\circ (f\times f).\] Thus it extends to an
    isomorphism
    $\caln(\calr_2\vert_B)\rightarrow\caln(\calr_1\vert_A)$.
  \item If $f$ is uniform, the isomorphism $f^\ast$ restricts to an
    isomorphism
    \[\chi_B L^\infty(Y,\Z)*H\chi_B\to\chi_A L^\infty(X,\Z)*G \chi_A\]
    of the embedded subrings.
  \end{enumerate}
\end{lemma}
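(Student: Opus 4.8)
The plan is to prove the two parts in turn, with the measure-theoretic orbit bijection $f\times f$ as the central object.

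For (1), I would first record that the defining property $f(G\cdot x\cap A)=H\cdot f(x)\cap B$ of an orbit equivalence says precisely that for a.e.\ $(x,x')\in A\times A$ one has $(x,x')\in\calr_1\iff(f(x),f(x'))\in\calr_2$. Hence $f\times f$ is, up to null sets, a measurable bijection $\calr_1\vert_A\to\calr_2\vert_B$ with inverse $f^{-1}\times f^{-1}$. Equipping each relation with its canonical measure (integrate the counting measure on fibres against $\mu_X\vert_A$, resp.\ $\mu_Y\vert_B$), the fact that $f$ preserves the base measure (condition~(2), with $\mu_X(A)=\mu_Y(B)=1$) together with the orbitwise bijectivity shows that $f\times f$ pushes one canonical measure to the other; this is what makes $\phi\mapsto\phi\circ(f\times f)$ well defined on $L^\infty$-classes, so $f^\ast$ exists. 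I would then check the algebraic identities directly from Definition~\ref{def:groupoid ring}: multiplicativity reduces to reindexing the convolution sum $\sum_{z\sim f(x)}$ over $B$ by $z=f(w)$, $w\sim x$, to the sum $\sum_{w\sim x}$ over $A$ (again using that $f$ is an orbitwise bijection); the involution identity is immediate from $f^\ast(\phi^\ast)(x,x')=\phi(f(x'),f(x))$; and trace preservation is the change of variables $\int_A\phi(f(x),f(x))\,d\mu_X=\int_B\phi(y,y)\,d\mu_Y$. Since precomposition sends $\bbZ$-valued functions to $\bbZ$-valued functions and preserves the uniform bound on fibre cardinalities, $f^\ast$ restricts to a trace-preserving $\ast$-isomorphism $\bbZ\calr_2\vert_B\to\bbZ\calr_1\vert_A$ with inverse $(f^{-1}\times f^{-1})^\ast$.

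The extension to the von Neumann algebras is then the standard GNS argument: a trace-preserving $\ast$-isomorphism of the weakly dense $\ast$-subalgebras $\bbC\calr_2\vert_B\to\bbC\calr_1\vert_A$ induces a unitary of the $L^2$-spaces of the traces intertwining the actions by left convolution, and conjugation by this unitary carries $\caln(\calr_2\vert_B)$ onto $\caln(\calr_1\vert_A)$, extending $f^\ast$.

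For (2), I would first translate the embedded subrings into support conditions via Remark~\ref{orbit-equivalence-relation}: an element of $\chi_AL^\infty(X,\Z)\ast G\chi_A$ is exactly a function in $\bbZ\calr_1\vert_A$ supported on finitely many graphs $\{(x,g^{-1}x)\}$, $g\in G$, and similarly for $H$ on the $B$-side. Now if $\phi\in\chi_BL^\infty(Y,\Z)\ast H\chi_B$ is supported on the graphs for $h$ in a finite set $S\subset H$, then $f^\ast\phi(x,x')\neq 0$ forces $f(x')=h^{-1}f(x)$ for some $h\in S$; writing $x'=g^{-1}x$ and applying the uniformity condition~(3) with $y=f(x')$ gives $x=f^{-1}(hf(x'))\in F(h)\cdot x'$, whence $g\in\bigcup_{h\in S}F(h)$, a finite set. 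Thus $f^\ast\phi$ lies in $\chi_AL^\infty(X,\Z)\ast G\chi_A$. The opposite inclusion for $(f^\ast)^{-1}=(f^{-1}\times f^{-1})^\ast$ is entirely symmetric, using condition~(2) in place of~(3). Combined with (1) this yields the claimed isomorphism of embedded subrings.

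The routine parts are the algebraic identities and the GNS extension; the part that genuinely uses the hypotheses, and where I would be most careful, is the support-transport in (2): one must verify that the null-set exceptions in the uniformity conditions accumulate over only countably many group elements (so the pointwise argument holds a.e.) and that the cocycle directions in conditions~(2) and~(3) are correctly matched to the two inclusions.
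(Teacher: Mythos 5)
Your argument is correct, and it is the expected one: the paper itself gives no proof of this lemma but defers to \cite{sauer-promotion}*{Lemma~4.23}, where the same direct verification (orbitwise bijectivity of $f\times f$ giving the reindexing of the convolution sum, change of variables for the trace, GNS extension, and transport of the finite-support condition via the uniformity sets $F(g)$, $F(h)$) is carried out. The only point to polish is the trace computation: for a weak orbit equivalence of index $c\neq 1$ condition~(2) of Definition~\ref{def:orbit equivalence} only gives $f_\ast(\mu_X\vert_A)=\tfrac{\mu_X(A)}{\mu_Y(B)}\,\mu_Y\vert_B$, so it is the \emph{normalized} traces $\tfrac{1}{\mu_X(A)}\int_A$ and $\tfrac{1}{\mu_Y(B)}\int_B$ on the corner algebras (as in Remark~\ref{rem:restriction of von Neumann algebras}) that $f^\ast$ preserves; your identity $\int_A\phi(f(x),f(x))\,d\mu_X=\int_B\phi(y,y)\,d\mu_Y$ as written is the special case $\mu_X(A)=\mu_Y(B)$.
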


\begin{convention}\label{con:notation for GNS construction}
  Let $G\action X$ be a standard action, $A\subset X$ be a measurable
  subset, and $\calr$ be the orbit equivalence relation of the action. We
  introduce some equivalent notations: Instead of $\caln(\calr)$ and
  $\caln(\calr\vert A)$ we also write $\caln(G\action X)$ and
  $\caln(G\action X\vert_A)$, respectively.  For the GNS-construction 
  $l^2(\caln(\calr\vert_A))$ of $\caln(\calr\vert_A)$ we write
  $L^2(\calr\vert_A)$ or $L^2(G\action X\vert_A)$.
\end{convention}

\subsection{The Fuglede-Kadison determinant and
  $L^2$-torsion}\label{sub:determinant and torsion}

In the sequel, let $\cala$ be a finite von Neumann algebra with trace
$\tr_\cala:\cala\to\bbC$. Our main examples are the group von Neumann
algebra $\caln(G)$ of a group $G$ and the von Neumann algebra
$\caln(\calr)$ of a standard equivalence relation. A Hilbert space
with a (left) $\cala$-action that embeds isometrically and
equivariantly as a closed subspace into a Hilbert sum of copies of
$l^2(\cala)$ is a called a \emph{Hilbert $\cala$-module}. A bounded
$\cala$-equivariant operator between Hilbert $\cala$-modules is called
a \emph{morphism of Hilbert $\cala$-modules}.  The trace $\tr_\cala$
extends in a natural way to positive morphisms of Hilbert
$\cala$-modules. Further, every Hilbert $\cala$-module $H$ has a
real-valued \emph{dimension} $\dim_\cala(H)\in [0,\infty]$. This
dimension satisfies, for example, $\dim_\cala(l^2(\cala)
p)=\trace_\cala(p)$ for any projection $p\in\cala$.  We refer
to~\cite{Luc2}*{Chapter~6} for more information. The following convention is
adopted:

\begin{convention}
  The $n\times n$-matrices $M(n\times n,\cala)$ over $\cala$ are again
  a von Neumann algebra. We equip $M(n\times n,\cala)$ with the
  unnormalized trace $\trace_{M(n\times
    n,\cala)}(A)=\sum_{i=1}^n\trace_\cala(A_{ii})$. If the context is
  clear, we just write $\trace_\cala$ instead of $\trace_{M(n\times
    n,\cala)}$.  There is a one-to-one correspondence between $A\in
  M(n\times n,\cala)$ and (left)-$\cala$-equivariant bounded operators
  $l^2(\cala)^n\rightarrow l^2(\cala)^n$ via right matrix
  multiplication. If we want to stress the point of view of $A$ as a
  bounded operator, we also use the notation $r_A$ for the right
  multiplication on $l^2(\cala)^n$.
\end{convention}

We remind the reader of the definition of the spectral density
function and the Fuglede-Kadison determinant: Let $f\colon U \to V$ be
a morphism of Hilbert $\cala$-modules of finite dimension. Denote by
$\big\{E_\lambda^{f^*f}\colon U \to U \mid \lambda \in \R\big\}$ the
family of spectral projections of the positive endomorphism $f^*f$.
The spectral projections have the properties $\|f(u)\| \leq \lambda
\cdot \|u\|$ for $u \in \im(E_{\lambda^2}^{f^*f})$ and $\|f(u)\| >
\lambda \cdot \|u\|$ for $0 \neq u \in \ker(E_{\lambda^2}^{f^*f})$.
The \emph{spectral density function} of $f$ is defined as
\[
F(f)\colon \R \to [0,\infty),~\lambda \mapsto
\tr_{\cala}\big(E_{\lambda^2}^{f^*f}\big).
\]
The spectral density function $F(f)$ is monotonous and
right-continuous.

\begin{definition}[\cite{Luc2}*{Definition~3.11 on p.~127}]
  Let $f\colon U \to V$ be a morphism of Hilbert $\cala$-modules of
  finite dimensions. The \emph{Fuglede-Kadison determinant} or just
  \emph{determinant} of $f$ is defined as
  \[
  \determ_{\cala}(f) = \begin{cases}
    \exp\big(\int_{0^+}^\infty\ln(\lambda) \, dF(f)(\lambda)\big)&\text{ if $\int_{0^+}^\infty \ln(\lambda)dF(f)(\lambda) > -\infty$},\\
    0&\text{ otherwise.}
  \end{cases}
  \]
  Here the integral is understood to be the Lebesgue-Stieltjes
  integral with respect to $F(f)$.
\end{definition}

The main properties of this determinant are described
in~\cite{Luc2}*{Theorem~3.14}.

\begin{remark}[Induction]\label{rem:inclusion of vN algebras}
  Let $\cala\hookrightarrow\calb$ be a trace-preserving inclusion of
  finite von Neumann algebras. Let $f:l^2(\cala)^m\rightarrow
  l^2(\cala)^n$ be a morphism of Hilbert $\cala$-modules. Then $f$ is
  given by right multiplication $r_A$ with an $m\times n$-matrix over
  $\cala$. The morphism of Hilbert $\calb$-modules
  $l^2(\calb)^m\rightarrow l^2(\calb)^n$ defined by right
  multiplication with the same matrix is denoted by
  $\ind_\cala^\calb(f)$. It is obvious that
  \[\tr_\calb\bigl(P(\ind_\cala^\calb(f))\bigr)=\tr_\cala\bigl(P(f)\bigr)\]
  for any complex polynomial $P$ and $m=n$. Normality of the trace yields that
  $F(\ind_\cala^\calb(f))=F(f)$, hence
  \begin{equation}\label{eq: det under induction}
    \determ_\calb\bigl(\ind_\cala^\calb(f)\bigr)=\determ_\cala(f). 
  \end{equation}
\end{remark}

\begin{remark}[Restriction]\label{rem:restriction of von Neumann algebras}
  Let $p\in\cala$ be a projection.  Then $p\cala p$ is a finite von
  Neumann algebra with normalized trace $\tr_{p\cala
    p}=\frac{1}{\tr_\cala(p)}\tr_\cala$~\cite{dixmier}*{Prop.~1 on
    p.~17}.  Let $f:U\rightarrow V$ be a morphism of finitely
  generated Hilbert $\cala$-modules.  The morphism $f^\ast f$
  decomposes as an orthogonal sum
  \[
  pU\oplus (1-p)U\xrightarrow{f^\ast f\vert_{pU}\oplus f^\ast
    f\vert_{(1-p)U}}pU\oplus (1-p)U.
  \]
  Since spectral calculus respects orthogonal sums, we obtain that
  $\Im (E^{f^\ast f\vert_{pU}}_{\lambda^2})=p\Im (E^{f^\ast
    f}_{\lambda^2})$.  Viewing $f\vert_{pU}$ as a morphism of Hilbert
  $p\cala p$-modules, we obtain that
  $F(f)=\tr_{\cala}(p)F(f\vert_{pU})$ provided $p$ is
  full~\cite{jones}*{Proposition~2.2.6 vii) on p.~26}.  From this we
  conclude that, if $\det_\cala(f)>0$, then
  \begin{equation}\label{eq:det under restriction}
    \ln\determ_\cala(f)= \tr_{\cala}(p)\cdot\ln\determ_{p\cala p}\bigl(f\vert_{pU}\bigr). 
  \end{equation}
\end{remark}

\begin{definition}
  Let $C_*$ a Hilbert $\cala$-chain complex. Suppose that
  \begin{enumerate}
  \item $C_*$ is dim-finite, i.e. $\dim_{\cala}(C_n) < \infty$ for all
    $n \in \Z$ and there exists $N \in \N$ such that $C_n = 0$ if
    $n<0$ or $n>N$,
  \item $b_n^{(2)}(C_*) = 0$ for all $n \in \Z$,
  \item $\det_{\cala}(c_n) > 0$ for all $n \in \Z$.
  \end{enumerate}
  We define its \emph{$L^2$-torsion} by
  \[
  \torsion(C_*)= -\sum_{n \in \Z} (-1)^n
  \ln\big(\determ_{\cala}(c_n)\big) \in \R.
  \]
\end{definition}
The following conjecture is true for all sofic groups~\cite{ES1}; no
example of a group that is not sofic is known.

\begin{definition}[\cite{Luc2}*{Conjecture~13.2 on p.~454}]\label{def:determinant conjecture for groups}
  We say that the group $G$ satisfies the \emph{determinant
    conjecture} or is of \emph{determinant class} if the
  Fuglede-Kadison determinant of every matrix $A\in M(m\times n, \bbZ
  G)$ satisfies
  \[
  \determ_{\caln(G)}(A)\ge 1.
  \]
\end{definition}

Let $X$ be a finite CW-complex with vanishing $L^2$-Betti numbers such
that $G=\pi_1(X)$ satisfies the determinant conjecture.  We define the
\emph{$L^2$-torsion} of $X$ as
\[\rho^{(2)}\bigl({\widetilde X}\bigr)=
\rho^{(2)}\bigl(l^2(G)\otimes_{\bbZ G}C^{\cell}_\ast({\widetilde
  X})\bigr).\] Since the differentials in the cellular chain complex
$C^{\cell}_\ast(\widetilde{X})$ are matrices over $\bbZ G$ with
respect to cellular bases and thus have positive determinant, this
definition is justified.  If $G$ is of determinant class, then this
definition only depends on the homotopy type of
$X$~\cite{Luc2}*{Lemma~13.6 on p.~456}.
 
\begin{definition}\label{def:l2 torsion}
  Let $G$ be a group that admits a finite CW-model $X$ of its
  classifying space $BG$.  Suppose that the group $G$ satisfies the
  determinant conjecture and that all its $L^2$-Betti numbers
  vanish. Then we define the \emph{$L^2$-torsion} of $G$ as
  $\rho^{(2)}(G) = \rho^{(2)}(\widetilde{X})$.
\end{definition}

\section{The measure-theoretic determinant conjecture}\label{sec:MDC}

The goal of this section is to prove Theorem~\ref{thm:intro thm about
  MDC}; we actually prove the following slightly stronger formulation:

\begin{theorem}\label{thm:precise MDC result} 
  Let $G$ be a countable group and $H\subset G$ a subgroup.
  \begin{enumerate}
  \item The trivial group satisfies MDC.
  \item If $G$ satisfies MDC, then $H$ satisfies MDC.
  \item Let $H\subset G$ be a normal subgroup such that the quotient
    $G/H$ is amenable.  Let $G\action X$ be a standard action.  If
    $H\action X$ satisfies MDC, then $G\action X$ satisfies MDC.
  \item Let $1\rightarrow K\rightarrow G\rightarrow Q\rightarrow 1$ be
    a group extension such that $K$ is finite. If $G$ satisfies MDC,
    then $Q$ satisfies MDC.
  \item Assume that $G = \colim_{i \in I} G_i$ where $(G_i)_{i\in I}$
    is a directed system  (with not necessarily injective structure maps) 
    of countable groups. If every $G_i$ satisfies
    MDC, then $G$ satisfies MDC.
  \item Let $(X,\mu)$ be a standard probability space. Suppose that
    $G$ is a directed limit of countable groups $G=\lim_{i\in I}G_i$
    such that the shift action of $G_i$ on the product space
    $(X,\mu)^{G_i}$ satisfies MDC. Then the shift action of $G$ on
    $(X,\mu)^G$ satisfies MDC.
  \end{enumerate}
\end{theorem}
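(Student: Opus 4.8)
The plan is to prove the bound $\determ_{\caln(\calr)}(A)\ge 1$ for every $A\in M(m\times n,\bbZ\calr)$ by a two-stage approximation that pushes the estimate down to the groups $G_i$, where MDC is assumed. Throughout let $\calr$ be the orbit equivalence relation of the shift $G\action (X,\mu)^G$ and $\calr_i$ that of $G_i\action (X,\mu)^{G_i}$. First I would reduce to the case where the structure maps are quotient maps $q_i\colon G\twoheadrightarrow G_i=G/N_i$: replacing each $G_i$ by the image of $G$ costs nothing, because MDC for the shift of a group passes to the shift of any subgroup via the induction formula~\eqref{eq: det under induction} applied to the trace-preserving inclusions coming from restriction to a subgroup and from the coordinate factor $(X,\mu)^{G_i}\to(X,\mu)^{G_i'}$. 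Thus $G=\lim_iG_i$ furnishes a cofinal decreasing family $N_i\trianglelefteq G$ with $\bigcap_iN_i=\{1\}$.

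The analytic engine, and the only point where the otherwise discontinuous Fuglede--Kadison determinant is controlled, is the following one-sided semicontinuity statement: \emph{if finite Borel measures $\nu_k$ on a common interval $[0,C]$ converge weakly to $\nu$ and $\int_{0^+}^C\log\lambda\,d\nu_k(\lambda)\ge 0$ for all $k$, then $\int_{0^+}^C\log\lambda\,d\nu\ge 0$.} I would prove this by truncating $\log$ to the bounded continuous function $\log_\delta\lambda=\max(\log\lambda,\log\delta)$: weak convergence gives $\int\log_\delta\,d\nu=\lim_k\int\log_\delta\,d\nu_k\ge 0$ for every $\delta>0$, and letting $\delta\downarrow 0$ the monotone convergence theorem forces $\int\log\,d\nu=\inf_\delta\int\log_\delta\,d\nu\ge 0$ (in particular $\nu$ has no atom at $0$). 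Applied to the measures $dF(r_A)$ this says: if integral matrices $A_k$ with $\determ(A_k)\ge 1$ have $dF(r_{A_k})\to dF(r_A)$ weakly and uniformly bounded $\norm{A_k}$, then $\determ(A)\ge 1$.

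The first approximation passes from a general $A\in M(m\times n,\bbZ\calr)$ to integral \emph{cylinder} matrices over the crossed product. Writing $A=\sum_{g\in G}\beta_g\cdot g$ with $\beta_g\in L^\infty(X^G,\Z)$, the bounded-width hypothesis yields a uniform $N$ with at most $N$ nonzero values $\beta_g(x)$ per point and $\norm{A}\le N\cdot\max_g\norm{\beta_g}_\infty=:NM$. Truncating the directions to a finite set $S\subset G$ and then recording, for each point, the integer configuration $(\beta_g(x))_{g\in S}$ (which ranges over a \emph{finite} set of possibilities), one sees that $A$ restricted to $S$ is encoded by a measurable map into a finite set; approximating the associated finite partition of $X^G$ by cylinder sets produces integral cylinder matrices $A_k$ with the \emph{same} width bound, hence $\norm{A_k}\le NM$ uniformly, and $A_k\to A$ in $\norm{\cdot}_2$. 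Uniform boundedness together with $\norm{\cdot}_2$-convergence give $\tr_{\caln(\calr)}\bigl((A_k^*A_k)^j\bigr)\to\tr_{\caln(\calr)}\bigl((A^*A)^j\bigr)$ for all $j$, so $dF(r_{A_k})\to dF(r_A)$ weakly; by the engine it therefore suffices to treat integral cylinder matrices. I expect \textbf{this} to be the main obstacle: to invoke the integral stage-wise MDC one must approximate inside $\bbZ\calr$, yet weak convergence of spectral measures demands a uniform operator-norm bound, and the naive integer approximants (coefficientwise conditional expectations) destroy either integrality or the norm bound—it is precisely the finite-configuration/cylinder-partition device, which approximates the whole local picture at once and preserves the width and hence $\norm{\cdot}$, that reconciles the two.

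The second approximation pushes an integral cylinder matrix $C$ to the stages. If the coefficients of $C$ are measurable in the coordinates of a finite set $F\subset G$ and the directions lie in a finite set $S\subset G$, then for each $j$ and all $i$ large enough that $q_i$ is injective on a finite subset of $G$ determined by $S$, $F$, and $j$, transporting coefficients and directions along $q_i$ yields $C_i\in M(\bbZ\calr_i)$ with $\tr_{\caln(\calr_i)}\bigl((C_i^*C_i)^j\bigr)=\tr_{\caln(\calr)}\bigl((C^*C)^j\bigr)$, the equality being exact because both sides reduce to the same product integral over finitely many coordinates. Hence $dF(r_{C_i})\to dF(r_C)$ weakly with uniformly bounded supports, and the stage hypothesis $\determ_{\caln(\calr_i)}(C_i)\ge 1$ gives $\int_{0^+}^\infty\log\lambda\,dF(r_{C_i})(\lambda)\ge 0$, so the engine yields $\determ_{\caln(\calr)}(C)\ge 1$. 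Feeding this back into the first approximation—each $A_k$ now has $\determ_{\caln(\calr)}(A_k)\ge 1$—and applying the engine once more gives $\determ_{\caln(\calr)}(A)\ge 1$, which completes the proof and circumvents the genuine discontinuity of the determinant flagged in the introduction.
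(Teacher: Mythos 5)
Your proposal addresses only part (6) of the six-part theorem. Parts (1)--(5) --- the trivial group, passage to subgroups, extensions with amenable quotient, extensions with finite kernel, and colimits --- are never discussed, and none of them is a formal consequence of the Bernoulli case: in the paper each requires its own argument (pointwise diagonalization over $L^\infty(X)$ and integrality of $q(0)$ for (1), coinduction for (2), a F{\o}lner exhaustion of $G/H$ and the cut-down projections $p_n$ for (3), the averaging projection $r_{|K|^{-1}N_K}$ and the $n$-th power trick for (4), and a stabilization argument for (5)). Worse, the very first reduction you make in (6) --- replacing each $G_i$ by the image of $G$ and arguing that ``MDC for the shift of a group passes to the shift of any subgroup'' --- is essentially an instance of part (2), which you have not proved; fortunately this reduction is also unnecessary, since one can transport matrices along the maps $\psi_i\colon G\to G_i$ directly, exactly as you do in your second approximation.

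For part (6) itself your route is essentially the paper's: approximate a general integral matrix by integral cylinder matrices while preserving the width (hence operator-norm) bound, transport those to the stages $G_i$ where the traces of all powers agree exactly once $\psi_i$ is injective on the relevant finite subset of $G$, and conclude with an approximation lemma. The genuine analytic gap is in your ``engine''. With the truncation $\log_\delta\lambda=\max(\log\lambda,\log\delta)$ the inequality $\int\log_\delta\,d\nu_k\ge 0$ is simply false when $\nu_k$ has an atom at $0$ (take $\nu_k=\delta_0$: then $\int_{0^+}\log\,d\nu_k=0$ but $\int\log_\delta\,d\nu_k=\log\delta<0$), and the spectral measures of the approximants do in general have atoms at $0$, namely $\dim\ker$. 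All your argument yields is $\int\log_\delta\,d\nu\ge\log(\delta)\cdot\limsup_k\nu_k(\{0\})$, which is useless as $\delta\downarrow 0$. The semicontinuity statement is nevertheless true, but one must either use a truncation that vanishes at $0$ (e.g.\ interpolate $\log$ linearly to the value $0$ at $\lambda=0$ on $[0,\delta]$) or, as in the standard proof, extract from $\determ(A_k)\ge 1$ the uniform bound $\nu_k((0,\epsilon])\le \log(C)\,\nu_k([0,C])/\log(1/\epsilon)$ controlling the mass near (but not at) $0$. This is precisely the content of the paper's Approximation Lemma (Lemma~\ref{lem:Approximation Lemma}), which the paper cites with references rather than reproves; since you single out the determinant's discontinuity as the crux, the missing control of the kernel mass is exactly the point your argument fails to supply.
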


Actions as in (6) are called \emph{Bernoulli actions.} The reason that we restrict 
to Bernoulli actions in (6) is that we do not know how to approximate an arbitrary 
$G$-action by actions of the groups $G_i$. 
The remainder of this section is devoted to the proof of the theorem above.

\subsection{The approximation lemma}

The proof of the next lemma is essentially the same as in the special
case of group von Neumann algebras. Such proofs are given
in~\citelist{\cite{Sch}*{Section~6}\cite{Luc2}*{Theorem~13.19 on
    p.~461}\cite{lueck-gafa}*{Lemma~2.5}}.

\begin{lemma}\label{lem:Approximation Lemma}
  Let $\cala$, $\cala_i$ $(i \in I)$ be finite von Neumann algebras
  with $I$ a directed set. Let $A \in M(d \times d';\cala)$ and $A_i
  \in M(d_i \times d'_i;\cala_i)$ be matrices with the following
  properties where $\Delta$, $\Delta_i$ are defined as $\Delta := A
  A^* \in M(d \times d;\cala)$, $\Delta_i := A_i A_i^* \in M(d_i
  \times d_i;\cala_i)$:
  \begin{enumerate}
  \item $\det_{\cala_i}(A_i) \geq 1$,
  \item there exists a constant $K>0$ with $\norm{r_\Delta}\le K$ and
    $\norm{r_{\Delta_i}}\le K$,
  \item $\lim_{i \in I} \frac{\tr_{\cala_i}(\Delta_i^m)}{d_i} =
    \frac{\tr_{\cala}(\Delta^m)}{d}$ for all $m \in \N$.
  \end{enumerate}
  Then $\lim_{i \in I} \dim_{\cala_i}(\ker A_i) = \dim_{\cala}(\ker
  A)$ and $\det_{\cala}(A) \ge 1$.
\end{lemma}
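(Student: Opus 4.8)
The plan is to prove the two conclusions in sequence, deriving the determinant inequality from the dimension convergence together with a lower-semicontinuity argument for the Fuglede--Kadison determinant. The underlying principle is the classical approximation philosophy: the measured quantities $\tr_{\cala_i}(\Delta_i^m)/d_i$ converge to $\tr_\cala(\Delta^m)/d$ by hypothesis (3), so the normalized spectral density functions $F(A_i)/d_i$ converge weakly to $F(A)/d$ in a controlled way, and this moment convergence is enough to transport both $L^2$-dimensions (values of the density functions at $0$, suitably normalized) and determinants (logarithmic integrals against the density functions) from the $\cala_i$ to $\cala$. The role of the uniform norm bound (2) is to confine all the relevant spectral measures to the compact interval $[0,K]$, so that all the integrals in play are over a fixed bounded domain and the polynomial moments in (3) actually determine the measures.

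First I would set up the spectral density functions $F_i := F(A_i)$ and $F := F(A)$ and pass to their normalized counterparts $\tilde F_i := F_i/d_i$ and $\tilde F := F/d$, which are distribution functions of probability measures $\nu_i, \nu$ supported in $[0,K]$ by (2). The trace conditions in (3) say exactly that all moments $\int \lambda^{2m}\,d\nu_i$ converge to $\int \lambda^{2m}\,d\nu$; since these measures live on a fixed compact interval, moment convergence implies weak convergence, hence convergence of $\tilde F_i(\lambda) \to \tilde F(\lambda)$ at every continuity point $\lambda$ of $\tilde F$. From here the dimension statement $\dim_{\cala_i}(\ker A_i) = \tr_{\cala_i}(E_0^{A_i^*A_i})$ follows: the $L^2$-dimension of the kernel is $\tilde F_i(0)\cdot d_i$ normalized appropriately, and weak convergence combined with monotonicity controls the jump of $\tilde F$ at $0$. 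The standard trick here (as in Lück's book and Schick's work) is to estimate the difference $\tilde F_i(\lambda) - \tilde F_i(0)$ uniformly for small $\lambda$ using a polynomial approximation argument: one bounds $\int_{0^+}^\lambda dF_i$ by comparing against a cleverly chosen polynomial in $\Delta_i$, exploiting $\det_{\cala_i}(A_i)\ge 1$ through the inequality $\int_{0^+}^\infty \ln(s)\,d\tilde F_i(s)\ge 0$, which forces the mass of $\nu_i$ near $0$ to be quantitatively small.

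For the determinant conclusion, the essential inequality is that the logarithm of the normalized determinant is the integral $\int_{0^+}^\infty \ln(\lambda)\,d\tilde F_i(\lambda)$, which is nonnegative for each $i$ precisely because $\det_{\cala_i}(A_i)\ge 1$. I would then invoke lower semicontinuity of this logarithmic integral under weak convergence of measures supported in $[0,K]$: since $\ln$ is bounded above on $[0,K]$ and continuous away from $0$, a Portmanteau-type argument gives
\[
\int_{0^+}^\infty \ln(\lambda)\,d\tilde F(\lambda)\ \ge\ \limsup_{i}\int_{0^+}^\infty \ln(\lambda)\,d\tilde F_i(\lambda)\ \ge\ 0,
\]
whence $\ln\det_\cala(A) = \int_{0^+}^\infty \ln(\lambda)\,dF(\lambda)\ge 0$, i.e.\ $\det_\cala(A)\ge 1$. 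The delicate point is the behavior near $\lambda=0$, where $\ln(\lambda)\to-\infty$ and lower semicontinuity could in principle fail if spectral mass escapes to $0$ in the limit; controlling this is exactly where the uniform determinant bound (1) earns its keep, preventing the kernels from spreading.

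The step I expect to be the main obstacle is the uniform control of the spectral measures near zero, that is, ruling out the scenario in which $\nu_i$ places vanishingly small but logarithmically costly mass arbitrarily close to $0$. Weak convergence alone does not control the integral of the unbounded function $\ln(\lambda)$ against the limiting measure; one needs the Fuglede--Kadison inequality $\det_{\cala_i}(A_i)\ge 1$ to furnish a uniform-in-$i$ integrability estimate that survives the limit. The technical heart is therefore an estimate, uniform in $i$, of the form $\int_{0^+}^\varepsilon |\ln\lambda|\,d\tilde F_i(\lambda)\to 0$ as $\varepsilon\to 0$, derived by the standard Schick-type argument that converts the determinant bound into a near-zero decay rate for the density functions; once this equicontinuity-at-zero is in hand, both conclusions follow by routine interchange of limits.
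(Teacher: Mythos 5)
Your outline is correct and coincides with the standard approximation argument that the paper itself invokes for this lemma (the proof is omitted there with references to Schick and to L\"uck's book, Theorem~13.19): moment convergence of the normalized spectral measures supported in $[0,K]$ gives weak convergence of the normalized spectral density functions, and hypothesis (1) supplies the uniform near-zero estimate of the form $\frac{1}{d_i}\bigl(F(A_i)(\lambda)-F(A_i)(0)\bigr)\le \ln(\sqrt{K})/\ln(1/\lambda)$, which is exactly the equicontinuity-at-zero you identify as the technical heart. So the proposal takes essentially the same route as the paper's (referenced) proof, and the one point to be careful about in writing it out --- that Portmanteau alone cannot handle the atom at $0$ and the logarithmic singularity --- is one you have already flagged correctly.
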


\begin{remark}\label{rem:connes embedding conjecture}
	In Connes' pioneering paper~\cite{connes} the question was raised whether every 
	finite von Neumann algebra embeds into an ultraproduct of the 
	hyperfinite II$_1$-factor (nowadays referred to as the \emph{Connes embedding problem}). 
	If the Connes embedding problem has a positive answer for the finite von Neumann 
	algebra $\cala$, then for every self-adjoint $\Delta\in\cala$ there is a sequence 
	of matrices $\Delta_i\in M(d_i\times d_i,\bbC)$ that satisfies (3) of 
	Lemma~\ref{lem:Approximation Lemma}. If the $\Delta_i$ have only integral entries, then 
	$\determ_\cala (\Delta)\ge 1$. In that 
	regard the determinant conjecture is an integral relative of the 
	Connes embedding problem. 
\end{remark}

The following lemma is often helpful for verifying the second
condition in Lemma~\ref{lem:Approximation Lemma}. 
We omit its proof which is essentially the same as
in~\cite{Luc2}*{Proof of Lemma~13.33 on p.~466}.
 
\begin{lemma}\label{lem: uniform bound on the norm}
  Let $G \curvearrowright X$ be a standard action and $A \in M(d
  \times d';L^\infty(X,\C)*G)$. For an element $f = \sum_{g \in G} f_g
  \cdot g \in L^\infty(X,\C)*G$, let $\|f\|_\infty=\sum_{g \in G}
  \|f_g\|_{\infty}$.  Then:
  \[
  \|r_A\| \leq d \cdot d' \cdot \max_{k,l} \|A_{k,l}\|_\infty.
  \]
\end{lemma}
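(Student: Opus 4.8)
The plan is to prove Lemma~\ref{lem: uniform bound on the norm} by bounding the operator norm of $r_A$ in terms of the operator norms of its matrix entries, and then bounding the norm of right multiplication by a single element $f=\sum_{g}f_g\cdot g$ of the crossed product. For the reduction to single entries, I would use the triangle inequality and the standard fact that the norm of a $d\times d'$ operator matrix is at most the sum (or a suitable multiple of the maximum) of the norms of its entries: writing $r_A$ as a sum of elementary operators, each acting as $r_{A_{k,l}}$ into one coordinate, gives $\norm{r_A}\le\sum_{k,l}\norm{r_{A_{k,l}}}\le d\cdot d'\cdot\max_{k,l}\norm{r_{A_{k,l}}}$. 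So the whole statement reduces to the case $d=d'=1$, i.e.\ to showing $\norm{r_f}\le\norm{f}_\infty=\sum_{g\in G}\norm{f_g}_\infty$ for a single $f\in L^\infty(X,\C)*G$.

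For the scalar case, the key point is to understand $r_f$ as an operator on $l^2(\caln(\calr))$, equivalently on the Hilbert space $l^2(\calr)$ underlying the GNS construction. I would decompose $f=\sum_{g\in G}f_g\cdot g$ into its homogeneous pieces. Each term $f_g\cdot g$ factors as multiplication by the essentially bounded function $f_g\in L^\infty(X,\C)$ followed by the right action of the group element $g$. The group element $g$ acts by a partial isometry (in fact a unitary, coming from the measure-preserving transformation $m_g$) on $l^2$, hence has operator norm at most $1$; and multiplication by $f_g$ has operator norm exactly $\norm{f_g}_\infty$, since it acts as a diagonal multiplication operator on the $L^2$-space. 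Therefore $\norm{r_{f_g\cdot g}}\le\norm{f_g}_\infty\cdot 1=\norm{f_g}_\infty$, and summing over $g$ via the triangle inequality yields $\norm{r_f}\le\sum_{g\in G}\norm{f_g}_\infty=\norm{f}_\infty$.

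Combining the two reductions gives the claimed bound. The main thing to be careful about — and the step I expect to require the most attention — is the identification of how $g\in G$ acts on the GNS Hilbert space and the verification that this action is by a norm-one operator. One must track the twisted multiplication in Definition~\ref{def-L-ring}, where the group element implements the measure-preserving automorphism $m_g$ via precomposition $s_{g_2}\circ m_{g_1^{-1}}$; the measure-invariance of the action is exactly what makes the induced operator on $l^2$ an isometry (unitary), so that its norm contributes the factor $1$ rather than something depending on $g$. Since all of this is formally parallel to the group von Neumann algebra case treated in~\cite{Luc2}*{Proof of Lemma~13.33 on p.~466}, I would not grind through the measure-theoretic bookkeeping in detail but instead invoke that the same argument applies verbatim once multiplication operators by $L^\infty$-functions are seen to have norm equal to the essential supremum.
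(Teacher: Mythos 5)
Your proposal is correct and is exactly the argument the paper has in mind: the paper omits the proof, stating it is the same as in L\"uck's book (Lemma~13.33), which proceeds precisely by the triangle inequality over matrix entries and over group elements, with each homogeneous piece $f_g\cdot g$ factoring as a multiplication operator of norm $\norm{f_g}_\infty$ composed with a unitary implementing the measure-preserving transformation. (Indeed one can check directly that $(f_g\cdot g)(f_g\cdot g)^\ast=\fixabs{f_g}^2$ in the crossed product, so $\norm{r_{f_g\cdot g}}=\norm{f_g}_\infty$ exactly as you claim.)
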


\subsection{Some reductions used in the proof}
\label{sub:some_reductions_used_in_the_proof}

\begin{lemma}\label{lem:can restrict to crossed product ring}
  Let $G\action (X,\mu)$ be an essentially free, standard action of a
  countable group $G$.  Let $\calr$ be its orbit equivalence relation
  on $X$.  Assume that $\determ_{\caln(\calr)}(A)\ge 1$ for every
  matrix $A\in M(d\times d';L^\infty(X,\bbZ)\ast G)$. Then $\calr$
  satisfies MDC.
\end{lemma}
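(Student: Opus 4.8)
The plan is to deduce the bound $\det_{\caln(\calr)}(B)\ge 1$ for an \emph{arbitrary} matrix $B\in M(d\times d',\bbZ\calr)$ from the corresponding bound over the subring $L^\infty(X,\bbZ)\ast G\subset\bbZ\calr$, by approximating $B$ with truncations that live in the crossed product ring and then invoking the Approximation Lemma~\ref{lem:Approximation Lemma}. Fix such a $B$ and write each entry as $B_{kl}=\sum_{g\in G}(B_{kl})_g\cdot g$ in the sense of Remark~\ref{orbit-equivalence-relation}, i.e.\ $(B_{kl})_g(x)=B_{kl}(x,g^{-1}x)$. Since the action is essentially free, the sheets $i_g(X)\subset\calr$ are pairwise disjoint and partition $\calr$ up to null sets, so this is an \emph{orthogonal} decomposition in $L^2(\calr)$. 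Each coefficient $(B_{kl})_g$ lies in $L^\infty(X,\bbZ)$ because $B_{kl}$ is essentially bounded and integer-valued.

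Enumerate $G=\{g_1,g_2,\dots\}$ and define the truncation $B^{(i)}$ by $B^{(i)}_{kl}=\sum_{j\le i}(B_{kl})_{g_j}\cdot g_j$. Then $B^{(i)}\in M(d\times d',L^\infty(X,\bbZ)\ast G)$, so the hypothesis gives $\det_{\caln(\calr)}(B^{(i)})\ge 1$; this is condition~(1) of the Approximation Lemma, applied with all algebras equal to $\cala:=\caln(\calr)$ and all matrix sizes equal (indexed by $i\in\bbN$). Orthogonality of the sheets yields the Pythagorean identity $\sum_{g\in G}\norm{(B_{kl})_g}_2^2=\norm{B_{kl}}_2^2<\infty$, so each entry of $B^{(i)}$ converges to the corresponding entry of $B$ in $L^2(\calr)$ as $i\to\infty$.

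It remains to verify conditions~(2) and~(3) for $\Delta:=BB^*$ and $\Delta_i:=B^{(i)}(B^{(i)})^*$. For~(2) I would use that, for a.e.\ $x$, each row and column of the kernel of $B$ has at most $N$ nonzero entries, each bounded by $\norm{B_{kl}}_\infty$; the same holds for every $B^{(i)}$, since truncation only shrinks the support. A Schur-test estimate then bounds $\norm{r_B}$ and all $\norm{r_{B^{(i)}}}$ by a single constant, whence $\norm{r_\Delta},\norm{r_{\Delta_i}}\le K$ uniformly. For~(3) I would first upgrade the entrywise $L^2$-convergence to strong operator convergence: the operators $r_{B^{(i)}}$ applied to the cyclic trace vector converge in $L^2(\calr)$ to $r_B$ applied to it, and since $\norm{a\xi}_2\le\norm{a}\cdot\norm{\xi}_2$ for $a$ in the algebra, this convergence extends from the cyclic vector to the dense subspace $\caln(\calr)\xi$, and then to the whole Hilbert module by the uniform norm bound of~(2). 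As the involution is isometric on $L^2(\calr)$, the same argument gives $r_{(B^{(i)})^*}\to r_{B^*}$ strongly, so by joint strong continuity of multiplication on norm-bounded sets $r_{\Delta_i}\to r_\Delta$, and hence $r_{\Delta_i}^p\to r_\Delta^p$, strongly for every $p\in\bbN$. Because $\tr_\cala(\Delta^p)$ is a sum of inner products of $\Delta^p$ against the cyclic vector, strong convergence yields $\tr_\cala(\Delta_i^p)\to\tr_\cala(\Delta^p)$, which is condition~(3). The Approximation Lemma then gives $\det_\cala(B)\ge 1$; as $B$ was arbitrary, $\calr$ satisfies MDC.

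I expect the main obstacle to be condition~(3): one must be careful that $L^2$-convergence of the matrix entries really does upgrade to strong operator convergence of $r_{B^{(i)}}$ and, crucially, of the \emph{adjoints} $r_{(B^{(i)})^*}$, so that the products $\Delta_i^p$ converge strongly and the normal trace passes to the limit. The uniform operator-norm bound of condition~(2) is precisely what legitimizes these interchanges of limits on norm-bounded sets.
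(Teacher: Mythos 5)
Your argument is correct, but it truncates in a different direction from the paper. You expand each entry along the group, $B_{kl}=\sum_{g}(B_{kl})_g\cdot g$, keep the first $i$ summands, and then must recover the two analytic hypotheses of Lemma~\ref{lem:Approximation Lemma} by hand: a Schur-test bound (the $\bbZ\calr$-analogue of Lemma~\ref{lem: uniform bound on the norm}, using the uniform bound $N$ on the number of nonzero entries per row and column of the kernel) for condition~(2), and an upgrade from $L^2(\calr)$-convergence of the entries to strong convergence of $r_{B^{(i)}}$ \emph{and} of $r_{(B^{(i)})^{\ast}}$ for condition~(3) --- the latter is legitimate only because $(B^{(i)})^{\ast}$ is itself the truncation of $B^{\ast}$ along the inverted group elements, so the Pythagorean argument applies to it separately; you are right to flag this as the delicate point, since strong convergence of adjoints does not follow formally from that of the operators themselves. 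The paper instead cuts down the \emph{space}: it sets $X_n=\{x\mid A_{ij}(x,g_mx)=0 \text{ for } m>n\}$, so that $\mu(X_n)\to 1$ and $A_n:=\chi_{X_n}A$ already has finite group support and hence lies in $M(d\times d';L^\infty(X,\bbZ)\ast G)$. That choice buys both conditions essentially for free: $A_nA_n^{\ast}=\chi_{X_n}AA^{\ast}\chi_{X_n}$ is a compression of $AA^{\ast}$ by a projection, so $\norm{r_{A_nA_n^{\ast}}}\le\norm{r_{AA^{\ast}}}$ without any Schur estimate, and normality of the trace applied to $\chi_{X_n}\nearrow 1$ gives the convergence of the traces of all powers directly. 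Both routes reduce to the Approximation Lemma and both work; the paper's is shorter, yours is more explicit about why the trace condition holds for every power $m$.
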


\begin{proof}
  Let $A\in M(d\times d';\bbZ\calr)$. Choose an enumeration
  $G=\{g_1,g_2,\dots\}$.  We define an increasing sequence
  $(X_n)_{n\ge 1}$ of measurable subsets of $X$ by
  \[
  X_n=\Bigl\{x\in X\vert A_{ij}(x,g_mx)=0\text{ for $m>n$ and all
    $1\le i\le d$, $1\le j\le d'$}\Bigr\}.
  \]
  Obviously, $\mu(X_n)\rightarrow 1$. Set $A_n=\chi_{X_n}A$. Then
  $A_n\in M(d\times d';L^\infty(X,\bbZ)\ast G)$ and
  $\norm{r_{A_nA_n^*}}=\norm{r_{\chi_{X_n}AA^\ast\chi_{X_n}}}\le\norm{r_{AA^\ast}}$.
  By the continuity of the trace and the trace property, we obtain
  that
  \[
  \tr_{\caln(\calr)}(AA^\ast)=
  \lim_{n\rightarrow\infty}\tr_{\caln(\calr)}(\chi_{X_n}AA^\ast)=
  \lim_{n\rightarrow\infty} \tr_{\caln(\calr)}(A_nA_n^\ast).
  \]
  The assertion now follows from Lemma~\ref{lem:Approximation Lemma}.
\end{proof}

\begin{lemma}\label{lem:positive is enough}
  If $\det_{\caln(G\action X)}(A)\ge 1$ for every $n\ge 1$ and every
  positive matrix $A \in M(n \times n;L^\infty(X,\Z)\ast G)$, then
  $\det_{\caln(G\action X)}(B)\ge 1$ holds for all $m,n\ge 1$ and
  every matrix $B\in M(m \times n;L^\infty(X,\Z)\ast G)$.
\end{lemma}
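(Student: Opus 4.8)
The plan is to reduce the statement about an arbitrary rectangular matrix to the hypothesis on positive square matrices by passing to $BB^\ast$. Given $B\in M(m\times n;L^\infty(X,\Z)\ast G)$, I would first record that the crossed product ring $L^\infty(X,\Z)\ast G$ is a $\ast$-ring whose involution preserves integrality: inside $\bbZ\calr$ the adjoint is $f^\ast(x,y)=f(y,x)$, so the adjoint of an element $\sum_g r_g\cdot g$ again has coefficients in $L^\infty(X,\Z)$. Hence $B^\ast\in M(n\times m;L^\infty(X,\Z)\ast G)$ and the product $A:=BB^\ast$ lies in $M(m\times m;L^\infty(X,\Z)\ast G)$. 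Moreover $A$ is self-adjoint, and, viewed via right multiplication as the operator $f^\ast f$ for $f=r_B\colon l^2(\caln(G\action X))^m\to l^2(\caln(G\action X))^n$, it is a positive morphism of Hilbert modules. Thus $A$ is an admissible input for the hypothesis (with the dummy index being $m$), which yields $\determ_{\caln(G\action X)}(A)\ge 1$.

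The second ingredient is the elementary determinant identity $\determ_{\caln(G\action X)}(BB^\ast)=\determ_{\caln(G\action X)}(B)^2$. I would deduce this directly from the spectral description of the Fuglede--Kadison determinant: writing $f=r_B$, the positive operator $f^\ast f=r_{BB^\ast}$ satisfies $F(f^\ast f)(\lambda)=F(f)(\sqrt{\lambda})$ for $\lambda\ge 0$, because the spectral projections of $(f^\ast f)^2$ at $\lambda^2$ coincide with those of $f^\ast f$ at $\lambda$, whence $E^{(f^\ast f)^\ast(f^\ast f)}_{\lambda^2}=E^{f^\ast f}_{\lambda}$. Substituting $\lambda=t^2$ in the Lebesgue--Stieltjes integral turns $\ln\determ(BB^\ast)=\int_{0^+}^\infty\ln(\lambda)\,dF(f^\ast f)(\lambda)$ into $\int_{0^+}^\infty\ln(t^2)\,dF(f)(t)=2\ln\determ(B)$. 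Alternatively one may simply invoke the corresponding part of \cite{Luc2}*{Theorem~3.14}.

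Combining the two gives $\determ_{\caln(G\action X)}(B)^2=\determ_{\caln(G\action X)}(BB^\ast)\ge 1$. Since the Fuglede--Kadison determinant is by definition nonnegative, taking square roots yields $\determ_{\caln(G\action X)}(B)\ge 1$, which is the assertion.

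I do not expect a genuine obstacle here; the substantive points are the two bookkeeping checks in the first paragraph. First, one must confirm that the involution on the integral crossed product ring introduces no non-integral coefficients, so that $BB^\ast$ really is an integral matrix and the hypothesis applies to it. Second, one must track the order reversal $r_Ar_{A'}=r_{A'A}$ for the right-multiplication convention, to be sure that $f^\ast f$ corresponds to $BB^\ast$ (an $m\times m$ matrix) rather than $B^\ast B$; either choice is positive and integral, but only by matching sizes correctly does the square-matrix hypothesis apply verbatim.
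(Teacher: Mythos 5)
Your proposal is correct and is essentially the paper's own proof, which consists of the single identity $\determ_{\caln(G\action X)}(B)=\determ_{\caln(G\action X)}(BB^\ast)^{1/2}$; you have merely spelled out the supporting checks (integrality of $B^\ast$, positivity and size of $BB^\ast$, and the spectral derivation of the identity), all of which are accurate.
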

\begin{proof}
  This directly follows from the identity
  \[\determ_{{\mathcal N}(G \curvearrowright X)}(B) =
  \determ_{{\mathcal N}(G \curvearrowright X)}(B B^*)^{1/2}.\qedhere\]
\end{proof}

At certain stages in the proof of Theorem~\ref{thm:precise MDC result}
it is convenient to allow for the flexibility of non-free actions on a
probability space. Let $G\action (X,\mu)$ be a, not necessarily free,
standard action. The crossed product ring $L^\infty(X)\ast G$ with its
trace can be completed to a von Neumann algebra $L^\infty(X)\astvn
G$. This von Neumann algebra is the von Neumann algebra associated to
the translation groupoid of the action~\cite{take}*{XIII~\S 2}.  If
the action is essentially free, then $L^\infty(X)\ast G=\caln(\calr)$
where $\calr\subset X\times X$ is the orbit equivalence relation of
$G\action X$. On the other extreme, if $X$ is just a point, then we
have $\caln(\calr)=\bbC$ and $L^\infty(X)\astvn G=\caln(G)$.

\begin{lemma}\label{lem:reduction for non-free actions}
  Assume that $G$ satisfies MDC. Let $G\action X$ be a (not
  necessarily free) standard action. Then
  \[
  \determ_{L^\infty(X)\astvn G}(A)\ge 1
  \]
  for every $A\in M(m\times n,L^\infty(X;\bbZ)\ast G)$.
\end{lemma}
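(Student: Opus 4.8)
The plan is to reduce the given non-free action to an essentially free one by taking a product with an auxiliary free action, and then to apply the induction formula~\eqref{eq: det under induction} together with the hypothesis that $G$ satisfies MDC. First I would fix an essentially free, standard action $G\action(Z,\nu)$ of $G$ on a standard probability space; such an action always exists, for instance the Bernoulli action on $[0,1]^G$ equipped with the product Lebesgue measure is essentially free for every countable group $G$. Forming the diagonal action $G\action(X\times Z,\mu\times\nu)$, the second coordinate guarantees that this product action is again essentially free, since for $g\neq 1$ its fixed-point set is contained in $X\times\{z:gz=z\}$, which is a $\mu\times\nu$-null set. Hence its orbit equivalence relation $\calr$ satisfies $L^\infty(X\times Z)\astvn G=\caln(\calr)$.

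Next I would consider the pullback along the measure-preserving, $G$-equivariant projection $\pr_X\colon X\times Z\to X$. Because $\pr_X$ is $G$-equivariant, this induces a unital $\ast$-homomorphism of crossed product rings $L^\infty(X)\ast G\hookrightarrow L^\infty(X\times Z)\ast G$, $\sum_g r_g\cdot g\mapsto\sum_g(r_g\circ\pr_X)\cdot g$, which carries integral matrices to integral matrices. Since $\nu$ is a probability measure, a direct computation of the traces, namely $\int_{X\times Z}(r_1\circ\pr_X)\,d(\mu\times\nu)=\int_X r_1\,d\mu$ for the coefficient $r_1$ of the identity, shows that this embedding is trace-preserving, and therefore extends to a trace-preserving inclusion $L^\infty(X)\astvn G\hookrightarrow L^\infty(X\times Z)\astvn G$ of finite von Neumann algebras.

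Given $A\in M(m\times n,L^\infty(X;\bbZ)\ast G)$, let $\ind(A)$ denote its image in $M(m\times n,L^\infty(X\times Z;\bbZ)\ast G)\subset M(m\times n,\bbZ\calr)$. By the induction formula~\eqref{eq: det under induction}, the Fuglede--Kadison determinant is unchanged under a trace-preserving inclusion, so $\determ_{L^\infty(X)\astvn G}(A)=\determ_{\caln(\calr)}(\ind(A))$. Since $G$ satisfies MDC and $G\action X\times Z$ is essentially free, the relation $\calr$ satisfies MDC, and therefore $\determ_{\caln(\calr)}(\ind(A))\ge 1$. Combining the two displays yields $\determ_{L^\infty(X)\astvn G}(A)\ge 1$, as required.

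The step I expect to be the main obstacle is justifying the passage to the product action at the level of von Neumann algebras: one must check that the pullback inclusion is genuinely trace-preserving and extends normally to the completions, so that~\eqref{eq: det under induction} applies verbatim with $\cala=L^\infty(X)\astvn G$ and $\calb=\caln(\calr)$. Everything else is a formal consequence of the definition of MDC for a group, which quantifies over \emph{all} essentially free standard actions, and of the existence of an essentially free action with which to take the product.
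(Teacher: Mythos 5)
Your proposal is correct and follows essentially the same route as the paper: the paper also passes to the diagonal action on $X\times Y$ with $Y=[0,1]^G$ carrying the shift action, pulls back along the projection to get a trace-preserving $\ast$-homomorphism of crossed product rings extending to the von Neumann algebras, and then invokes the MDC hypothesis for the essentially free product action together with Remark~\ref{rem:inclusion of vN algebras}. The only difference is that you spell out the trace-preservation and essential freeness checks that the paper leaves implicit.
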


\begin{proof}
  Let $G\action Y$ be an essentially free, standard action; take, for
  example, $Y=[0,1]^G$ with its shift action. Then the diagonal action
  of $G$ on the product probability space $X\times Y$ is essentially
  free. The projection $\pr:X\times Y\rightarrow X$ induces a
  trace-preserving $\ast$-homomorphism
  \[
  \pr^\ast\colon L^\infty(X)\ast G\rightarrow L^\infty(X\times Y)\ast
  G,\quad\sum f_g\cdot g\mapsto\sum (f_g\circ \pr)\cdot g,
  \]
  which extends to the von Neumann algebras. Since $G\action X\times
  Y$ satisfies MDC by hypothesis, the assertion follows (see
  Remark~\ref{rem:inclusion of vN algebras}).
\end{proof}

The remainder of this section is devoted to the proof of
Theorem~\ref{thm:precise MDC result}. Because of Lemma~\ref{lem:can
  restrict to crossed product ring} it suffices in each case to show
MDC only for matrices in the crossed product ring.

\subsection{The trivial group and transition to subgroups}
\label{sub:trivial and subgroup}

\begin{proof}[Proof of Theorem~\ref{thm:precise MDC result} (1)]
  Let $(X,\mu)$ be a standard probability space.  Let $A \in M(m\times
  n,L^{\infty}(X,\bbZ))$. We have to show that
  \[\determ_{L^{\infty}(X)}(A) \ge 1.\]
  By~\cite{Luc1}*{Lemma~4.1} there is a unitary matrix $U \in
  M(m\times m, L^{\infty}(X))$ such that $U^{-1}AA^*U$ is a diagonal
  matrix whose diagonal entries $f_1, f_2,\dots, f_m\in L^\infty(X)$
  are positive functions.  We conclude from~\cite{Luc2}*{Theorem 3.14
    (1) and (4) and Lemma 3.15 (3), (4) and (7) on p.~128/129} that
  \[
  \determ_{L^\infty(X)}(A)^2 = \prod_{i=1}^m
  \determ_{L^{\infty}(X)}(f_i).
  \]
  According to~\cite{Luc2}*{Example~3.13 on p.~128} we have
  \[
  \determ_{L^{\infty}(X)}(f_i)= \exp\Bigl(\int_X \ln(f_i(x)) \cdot
  \chi_{\{x \in X \mid f_i(x) > 0\}}d\mu(x)\Bigr).
  \]
  Combining the aforementioned equalities yields
  \begin{equation}\label{eq:product log}
    \determ_{L^{\infty}(X)}(A)^2= 
    \exp\Bigl( \int_X \ln\bigl(\prod_{\substack{i=1,2, \ldots ,m\\f_i(x) > 0}} f_i(x)\bigr) d\mu(x) \Bigr).
  \end{equation}
  Fix $x\in X$. Then $A(x)A(x)^*$ is a matrix in $M(m\times m,\bbZ)$.
  Let $p(t)$ be its characteristic polynomial. It can be written as
  $p(t) = t^a \cdot q(t)$ for a polynomial $q(t)$ with integer
  coefficients and $q(0) \not= 0$. Then $q(0)$ is the product of the
  positive eigenvalues of $A(x)A(x)^\ast$, i.e.
  \[\prod_{\substack{i=1,2, \ldots ,m\\f_i(x) > 0}} f_i(x)=q(0).\]
  Now the assertion follows from $q(0)\ge 1$ and~\eqref{eq:product
    log}.
\end{proof}

\begin{proof}[Proof of Theorem~\ref{thm:precise MDC result} (2)] 
  Let $i \colon H \to G$ be the inclusion of a subgroup, and let
  $(X,\mu)$ be a standard probability space endowed with an
  essentially free standard $H$-action.  Let $i_!X$ be the coinduction
  of $X$, i.e. the $G$-space $\map_H(G,X)$, on which $g \in G$ acts
  from the left by composition with the $G$-map $r_{g^{-1}} \colon G
  \to G,~ g_0 \mapsto g_0g^{-1}$. By choosing a set theoretic section
  $s \colon G/H \to G$ of the projection with $s(1)=1$, we obtain a
  bijection
  \[i_!X \xrightarrow{\cong} \prod_{gH \in G/H} X,\quad\phi
  \mapsto\bigl(\phi(s(gH))\bigr)_{gH\in G/H}.\] We endow $i_!X$ with
  the structure of a standard probability space $(i_!X,\nu)$ by
  pulling back the product measure on $\prod_{gH \in G/H} X$.  This
  structure does not depend on the choice of $s$; the measure $\nu$ is
  $G$-invariant~\cite{gaboriau-examples}*{3.4}.

  Let $\pr:i_!X \rightarrow X$ be the map sending $\phi$ to $\phi(1)$.
  We obtain a trace preserving, $H$-equivariant $\ast$-homomorphism
  $\pr^* \colon L^{\infty}(X) \to L^{\infty}(i_!X)$ by composition
  with $\pr$.  Thus we obtain a trace preserving $\ast$-algebra
  homomorphism
  \[u \colon L^{\infty}(X)\ast H \to L^{\infty}(i_!X)\ast G,\quad\sum_{h
    \in H} \lambda_h \cdot h ~ \mapsto \sum_{h\in H}\pr^*(\lambda_h)
  \cdot h,\] which extends to the von Neumann algebras $u \colon
  \caln(H\action X) \to \caln(G\action
  i_!X)$~\citelist{\cite{dixmier}\cite{sauer-promotion}*{Theorem~1.47}}.

  Let $A \in M(m\times n;L^{\infty}(X,\bbZ)\ast H)$.  Let $u_*A\in
  M(m\times n;L^\infty(i_!X,\bbZ)\ast G)$ be the matrix obtained from
  $A$ by applying elementwise the ring homomorphism $u$.  By
  hypothesis, ${\det}_{\caln(G\action i_!X)}(u_*A)\ge 1$.  The
  assertion follows from $\det_{\caln(H\action
    X)}(A)=\det_{\caln(G\action i_!X)}(u_*A)$
  (Remark~\ref{rem:inclusion of vN algebras}).
\end{proof}

\subsection{Extensions with amenable quotients}\label{sub:extensions}

\begin{proof}[Proof of Theorem~\ref{thm:precise MDC result} (3)]
  Let $G'\subset G$ be a subgroup. Then $H'=H\cap G'$ is normal in
  $G'$, and $G'/H'$ injects into $G/H$, thus $G'/H'$ is also
  amenable. Obviously, $H'\action X$ satisfies MDC, if $H\action X$
  does. We have to show that $\determ_{\caln(G\action X)}(A)\ge 1$ for
  every matrix $A$ over the ring $L^\infty(X)\ast G$. Taking
  $G'\subset G$ to be the subgroup generated by the finitely many
  elements of $G$ appearing in such $A$, it is enough to show that
  $G'\subset X$ satisfies MDC for every finitely generated subgroup
  $G'\subset G$. By our first remark, we thus may and will assume that
  $G$ is finitely generated.

  Let $p\colon G\to G/H$ be the projection.  We choose a
  left-invariant word-metric $d$ on the finitely generated group
  $G/H$.  For $R>0$ and a subset $Z\subset G/H$ we define
  \[
  N_R(Z) = \bigl\{x \in G/H \mid d(x,Z) \le R \text{ and } d(x,G/H-Z)
  \leq R\bigr\}.
  \]
  By amenability (compare~\cite{Luc2}*{Lemma~13.40 on p.~469}) there
  exists an increasing exhaustion of $G/H$ by finite subsets $Z_1
  \subset Z_2 \subset Z_3\subset\dots\subset G/H$ (\emph{F{\o}lner
    exhaustion}) such that for all $R>0$ and $\epsilon>0$ we find $N
  \in \N$ satisfying $|N_R(Z_n)| \le\epsilon \cdot |Z_n|$ for all $n
  \geq N$.  Let $S$ be a transversal for $H$ in $G$. We set $S_n = \{
  s \in S \big| sH \in Z_n \} \subset G$. We have
  $\abs{S_n}=\abs{Z_n}$.  Let $p_n:L^2(G \curvearrowright X)\to
  L^2(G\action X)$ be the projection onto the closure of the span of
  $p^{-1}(Z_n)$ and $L^2(X)$, i.e.
  \[
  p_n\big( \sum_{g \in G} r_g \cdot g \big) =
  \sum_{\stackrel{\scriptstyle g \in G}{gH \in Z_n}} r_g \cdot g.
  \]
  The map $p_n$ is not $L^\infty(X)\ast G$-equivariant in general but
  $L^\infty(X)\ast H$-equi\-variant. 

We remark that in the group case ($X=\{\ast\}$) a more general statement, where 
$H$ is not necessarily normal, is stated 
in~\cite{Sch}*{Section~4} 
and~\cite{Luc2}*{Proposition~13.93 on p.~469}. However, the proofs 
of these statements are not correct: the mistakes are related to the  
equivariance of the map $p_n$ above for which normality of $H$ is essential. 

Notice that we have an isometric
  $L^\infty(X)\ast H$-equivariant isomorphism
  \[
  u_n\colon L^2(H \curvearrowright X)^{|S_n|} \to \im(p_n), \quad
  (f_s)_{s \in S_n} \mapsto \sum_{s \in S_n} f_s \cdot s.
  \]
  Let $A \in M(d \times d';L^\infty(X,\Z)\ast G)$. In order to show
  $\det_{\caln(G\action X)}(A)\ge 1$ we may assume that $d=d'$ and $A$
  is positive (Lemma~\ref{lem:positive is enough}).  Consider the
  operator
  \[L^2(H \curvearrowright X)^{d|S_n|}\xrightarrow{\oplus
    u_n}\im(p_n)^d\xrightarrow{r_A}L^2(G\action
  A)^d\xrightarrow{\oplus p_n}\im(p_n)^d\xrightarrow{\oplus u_n^{-1}}
  L^2(H \curvearrowright X)^{d|S_n|}.\] It is easy to see that this
  operator is given by right multiplication with a positive matrix
  $A_n\in M(d \cdot |Z_n| \times d \cdot |Z_n|;L^\infty(X,\Z)\ast H)$.
  By hypothesis, we have $\det_{\caln(H\action X)}(A_n)\ge 1$ for
  every $n\ge 1$.
  Since $\|p_n\| \leq 1$ holds for all $n \in \N$, we conclude $\|r_{A_n}\| \leq \|r_A\|$.\\
  By Lemma~\ref{lem:Approximation Lemma} it suffices to show that
  \[
  \lim_{n \to \infty} \frac{\tr_{{\mathcal N}(H \curvearrowright
      X)}(A_n^m)}{d \cdot |Z_n|} = \frac{\tr_{{\mathcal N}(G
      \curvearrowright X)}(A^m)}{d} \mbox{ for all } m \in \N.
  \]
  This is proven in the case $X = \{\ast\}$ in~\cite{Luc2}*{Lemma
    13.42 on page 470}, and the proof is essentially the same in our
  setting.
\end{proof}

\subsection{Extensions with finite kernels} 
\label{sub:extensions_with_finite_kernels}

\begin{proof}[Proof of Theorem~\ref{thm:precise MDC result} (4)]
  Let $Q\action X$ be an essentially free, standard action. Let $G$
  act on $X$ via $p:G\rightarrow Q$. In the sequel, we write
  $F=l^2(L^\infty(X)\astvn G)$ for the GNS-construction of the von
  Neumann algebra $L^\infty(X)\astvn G$.

  Let $N_K \in G$ be the element $\sum_{g \in K} g$. Consider the
  Hilbert $L^\infty(X)\astvn G$-morphism
  \[r_{|K|^{-1}\cdot N_K}\colon F \to F.\] Then $r_{|K|^{-1}\cdot
    N_K}$ is an orthogonal projection with the $K$-fixed points $F^K$
  as image since $K \subseteq G$ is normal and $K$ acts trivially on
  $X$. In particular, $F^K$ is a finitely generated Hilbert
  $L^\infty(X)\astvn G$-module.  Define an isometric bijective
  operator
  \[v \colon L^2(Q\action X) \xrightarrow{\cong} F^K, ~\sum_{q \in Q}
  \lambda_q \cdot q\mapsto |K|^{-1/2} \cdot \sum_{g \in G} \lambda_{p(g)}
  \cdot g.\] We also set
  \[w = v^{-1} \circ r_{|K|^{-1}\cdot N_K} \colon F \to L^2(Q \action
  X).\] For every $g\in G$, $f\in L^\infty(X)$, and $a\in L^2(Q\action
  X)$ we have
  \begin{align}\label{eq:rules}
    v(r_{p(g)}(a))&=r_g(v(a)),\\
    v(r_f(a))&=r_f(v(a)).\notag
  \end{align}
  This, in particular, implies that the image of a $\caln(Q\action
  X)$-invariant subspace $L^2(Q\action X)^d$ under
  $\diag(v):L^2(Q\action X)^d\to (F^K)^d$ is $L^\infty(X)\astvn
  G$-invariant. Upon choosing an isometric embedding into
  $L^2(Q\action X)^d$ and thus into $(F^K)^d$ via $\diag(v)$, we can
  equip every finitely generated Hilbert $\caln(Q\action X)$-module
  $V$ with the structure of a finitely generated Hilbert
  $L^\infty(X)\astvn G$-module. We denote $V$ endowed with this new
  structure by $\res_p V$. Because of~\eqref{eq:rules} these module
  structures are related by $g\phi(x)=p(g)x$ and $f\phi(x)=fx$, where
  we denote the identity $V\rightarrow\res_p V$ by $\phi$ for better
  distinction. From this we also see that the module structure on
  $\res_p V$ does not depend on the chosen embedding $V\hookrightarrow
  L^2(Q\action X)^d$ and that every morphism $f \colon V \to W$ of
  finitely generated Hilbert $\caln(Q \action X)$-modules induces a
  morphism $\res_p f \colon \res_p V \to \res_p W$ of finitely
  generated Hilbert $L^\infty(X)\astvn G$-modules by the same map.
  Next we show that
  \begin{equation}
    \tr_{L^\infty(X)\astvn G}(\res_p f)=\frac{1}{|K|} \cdot 
    \tr_{\caln(Q \action X)}(f).
    \label{tr_G(res_p(f)) = |K|^{-1} tr_Q(f)}
  \end{equation}
  It suffices to treat the case $V = L^2(Q \action X)$. Let $e_G \in
  F$ and $e_Q \in L^2(Q \action H)$ be the elements given by the unit
  element in the rings $L^{\infty}(X) \ast G$ and $L^{\infty}(X) \ast
  Q$. Write $f(e_Q) = \sum_{q \in Q} \lambda_q \cdot q$. Then
  \[w^*\circ f \circ w(e_Q) ~ = ~ \frac{1}{|K|} \cdot \sum_{g \in G}
  \lambda_{p(g)}\cdot g.\] This implies
  \begin{align*}
    \tr_{\caln(Q \action X)}(f) &=\langle f(e_Q), e_Q \rangle_{L^2(Q
      \action X)}
    \\
    &=\langle 1,\lambda_{e_Q} \rangle_{L^2(X)}
    \\
    &=\abs{K}\cdot \langle w^*\circ f \circ w(e_G), e_G \rangle_{F}
    \\
    &=\abs{K}\cdot \tr_{L^\infty(X)\astvn G}(\res_p f).
  \end{align*}
  Hence \eqref{tr_G(res_p(f)) = |K|^{-1} tr_Q(f)} follows.

  If $\{E_{\lambda} \mid \lambda \in \bbR\}$ is the spectral family of
  $f \colon V \to V$, then $\{\res_p E_{\lambda} \mid \lambda \in
  \bbR\}$ is the spectral family of $\res_p f \colon \res_p V \to
  \res_pV$. Hence~\eqref{tr_G(res_p(f)) = |K|^{-1} tr_Q(f)}
  successively yields that
  \begin{align}\label{det(res_p f) = det(f)^{1/|K|}}
    F(\res_p f)&=\frac{1}{|K|} \cdot F(f),\notag \\
    \determ_{L^\infty(X)\astvn G}(\res_p f) &= \left(\determ_{\caln(Q
        \action X)}(f)\right)^{1/|K|}.
  \end{align}

  Let $A \in M(d' \times d, L^\infty(X,\Z)\ast Q)$. We have to show
  that $\det_{\caln(Q\action X)}(A)\ge 1$.  By Lemma~\ref{lem:positive
    is enough} we may and will assume that $d'=d$ and $A$ is positive.

  Let $n\in\bbN$. We get a morphism $\res_p r_{A^n}\colon \res_p L^2(Q
  \action X)^d \to \res_p L^2(Q\action X)^d$ of finitely generated
  Hilbert $L^\infty(X)\astvn G$-modules. We have the orthogonal sum decomposition
  \[F = \underbrace{\im(r_{|K|^{-1} \cdot N_K})}_{=F^K} \oplus
  \im(r_{1- |K|^{-1} \cdot N_K}).\] Consider the morphism
  \[w^* \circ r_{A^n} \circ w \oplus \id_{\im(r_{1-|K|^{-1} \cdot
      N_K})^d} \colon F^d \to F^d.\]

  We conclude from \eqref{det(res_p f) = det(f)^{1/|K|}} and
  \cite{Luc2}*{Theorem~3.14~(1) on p.~128 and Lemma~3.15~(7) on
    p.~130} that
  \begin{align}\label{det(r_A^n^(2)}
    \determ_{\caln(Q \action X)}(r_{A})^n &=
    \determ_{\caln(Q \action X)}(r_{A^n})\notag \\
    &=\determ_{L^\infty(X)\astvn G}(\res_p r_{A^n})^{\abs{K}}\notag\\
    &=\determ_{L^\infty(X)\astvn G}(w^* \circ r_{A^n} \circ
    w)^{\abs{K}}
    \cdot\determ_{L^\infty(X)\astvn G}\bigl(\id_{\im(r_{1-|K|^{-1} \cdot N_K})^d}\bigr)^{\abs{K}}\notag\\
    &=\determ_{L^\infty(X)\astvn G} \bigl(w^* \circ r_{A^n} \circ w
    \oplus \id_{\im(r_{1-|K|^{-1} \cdot N_K})^d} \bigr)^{\abs{K}}.
  \end{align}

  For $u = \sum_{q \in Q} \lambda_q \cdot q$ in $L^{\infty}(X,\bbZ)
  \ast Q$ let $s(u) \in L^{\infty}(X,\bbZ) \ast G$ be the
  element $\sum_{g \in G} \lambda_{p(g)} \cdot g$.
  Define $B = (b_{i,j})_{i,j} \in M(d\times d,L^{\infty}(X,\bbZ) \ast
  G)$ to be the matrix obtained from $A^n = (a_{i,j})$ by setting
  $b_{i,i} = s(a_{i,i} - 1)$ and $b_{i,j} = s(a_{i,j})$ if $i\ne j$. 
  One easily verifies that
  \[\frac{1}{|K|} \cdot r_{|K| \cdot I_d + B} =
  w^* \circ r_{A^n} \circ w \oplus \id_{\im(r_{1-|K|^{-1} \cdot
      N_K})^d},\] where $I_d$ is the identity matrix in $M(d\times
  d,L^{\infty}(X,\bbZ) \ast G)$.  Notice that $|K| \cdot I_d + B$ lies
  in $M(d\times d,L^{\infty}(X,\bbZ) \ast G)$; thus, by hypothesis and
  Lemma~\ref{lem:reduction for non-free actions},
  $\determ_{L^\infty(X)\astvn G}\left(r_{|K| \cdot I_d + B}\right) \ge
  1$.  We conclude from~\cite{Luc2}*{Theorem 3.14 (1) on page 128}
  that
  \begin{align}\label{det (r_res_pA = |K|^-d cdot det(r_1-B)}
    \determ_{L^\infty(X)\astvn G} \left(w^* \circ r_{A^n}\circ w
      \oplus \id_{\im(r_{1-|K|^{-1} \cdot N_K})^d} \right)&=
    \determ_{L^\infty(X)\astvn G}\left(\frac{1}{|K|} \cdot r_{|K|
        \cdot I_d + B}\right)
    \notag\\
    &=\frac{1}{|K|^d} \cdot\determ_{L^\infty(X)\astvn G}\left(r_{|K|
        \cdot I_d + B}\right)
    \notag\\
    &\ge\frac{1}{|K|^d}.
  \end{align}
  We conclude from~\eqref{det(r_A^n^(2)} and~\eqref{det (r_res_pA =
    |K|^-d cdot det(r_1-B)} that
  \[\determ_{\caln(Q\action X)}(r_A)\ge\abs{K}^{-d\abs{K}/n}\]
  holds for every $n\in\bbN$. Hence ${\det}_{\caln(Q\action X)}(r_A)
  \ge 1$.
\end{proof}

\subsection{Colimits}\label{sub:colimits}
Throughout this subsection, we consider a directed system of groups
$\{G_i \mid i \in I\}$ over the directed set $I$.  Denote its colimit
by $G = \colim_{i \in I} G_i$.  Let $\psi_i\colon G_i \to G$ for $i
\in I$ and $\psi_{i,j}\colon G_i \to G_j$ for $i,j \in I, i\le j$, be
the structure maps. We do not require that $\psi_i$ or $\psi_{i,j}$
are injective.

\begin{proof}[Proof of Theorem~\ref{thm:precise MDC result} (5)]
  Let $G\action (X,\mu)$ be an essentially free, standard action.
  Every $G_i$ acts on $X$ via $\psi_i$ (but not necessarily free).  We
  obtain a trace-preserving ring $\ast$-homomorphism
  \[
  \alpha_i\colon L^\infty(X)\ast G_i\to L^\infty(X)\ast G,~\sum_{h\in
    G_i} l_h\cdot h \mapsto \sum_{h\in G_i} l_h\cdot\psi_i(h).
  \]
  Let $A \in M(m\times n,L^\infty(X;\bbZ) \ast G)$.  Write $A= \sum_{g
    \in G}f_g g$ with $f_g \in M(m\times n,L^{\infty}(X,\bbZ))$. Let
  $i_0\in I$ be such that for every $i\ge i_0$ the implication
  \[
  f_g\ne 0\Rightarrow g\in\Im(\psi_i)
  \]
  holds. Let $V=\{g\in G\vert f_g\ne 0\}$. For every $g\in V$ let
  $g^{(i)}\in G_i$ be a preimage of $g$. Let $A_i=\sum_{g\in V}
  f_g\cdot g^{(i)}$ and $\Delta_i=A_iA_i^\ast$.  We have
  $\alpha_i(A_i)=A$. From Lemma~\ref{lem: uniform bound on the norm}
  it is clear that there is a uniform bound of the operator norms of
  the $\Delta_i\in M(n\times n,L^\infty(X)\astvn G_i)$. By hypothesis
  and Lemma~\ref{lem:reduction for non-free actions} we have
  \[
  \determ_{L^\infty(X)\astvn G_i}\bigl(A_i\bigr)\ge 1.
  \]
  Let $m\ge 1$. The assertion would follow from
  Lemma~\ref{lem:Approximation Lemma} provided we show that
  \begin{equation}\label{eq:final argument for colimit}
    \tr_{\caln(G\action X)}(\Delta^m)=
    \lim_{i\in I}\tr_{L^\infty(X)\astvn G_i}(\Delta_i^m). 
  \end{equation}
  We can find $i_1\ge i_0$ in $I$ such that for every $h\in G_{i_1}$
  with $l_h\ne 0\in M(n\times n,L^\infty(X_{i_1}\times X))$ in the
  finite linear combination $\Delta_{i_1}^m=\sum_h l_hh$ we have the
  implication
  \[
  \psi_{i_1}(h)=1\Rightarrow h=1.
  \]
  For $i\ge i_1$ the right hand side of~\eqref{eq:final argument for
    colimit} is stationary and equal to the left hand side.
\end{proof}

\subsection{Bernoulli actions}\label{sub:sofic groups}

\begin{proof}[Proof of Theorem~\ref{thm:precise MDC result} (6)]
  For any countable set $A$ denote by $\mu_A$ the product measure
  $\bigotimes_{a\in A}\mu$ on $X^A$. The $\sigma$-algebra of Borel
  sets in $X^G=\prod_{g \in G} X$ is the $\sigma$-algebra ${\mathcal
    S}$ generated by
  \[
  {\mathcal B} = \bigl\{\prod_{g \in G} U_g \, \big| \, U_g = X
  \mathrm{\; for \; almost \; all \;} g \in G\bigr\}.
  \]
  Let ${\mathcal A}$ be the algebra generated by ${\mathcal B}$. We
  say that a measurable function $f:X\rightarrow\bbZ$ is
  $\cala$-measurable if $f^{-1}(z)\in\cala$ for every $z\in\bbZ$.  Any
  set $M \in {\mathcal A}$ can be written as $M = \cup_{k=1}^n M_k$
  with disjoint sets $M_k \in {\mathcal B}$. The sets $M \in {\mathcal
    A}$ have the property that there exists a finite subset $F
  \subseteq G$ with $M = \pr_F^{-1}(\pr_F(M))$ where $\pr_F\colon
  \prod_{g \in G} X\to \prod_{g \in F} X$ is the projection onto the
  components of $F$. We denote with $F(M) \subseteq G$ the smallest
  subset with this property.  Let $R$ be the ring of all bounded,
  $\cala$-measurable functions $X^G\rightarrow\bbZ$.  For $f\in R$, we
  denote with $F(f)\subset G$ the (finite) union
  $F(f)=\bigcup_{z\in\bbZ} F(f^{-1}(z))$. Since any non-empty set in
  $\cala$ has positive measure, the ring $R$ embeds into
  $L^\infty(X^G,\bbZ)$. Since $R$ is $G$-invariant, we obtain a
  subring $R\ast G\subset L^\infty(X^G,\bbZ)\ast G$ that is closed
  under the involution.

  Let $\psi_i\colon G\rightarrow G_i$, $i\in I$, be the structure maps
  of the limit.  The map $\psi_i$ induces a measurable map
  \[\alpha_i:X^{G_i}\rightarrow X^G,
  ~\alpha_i((x_h))_g=x_{\psi_i(g)}.\] If $\psi_i\vert_{F(M)}$ is
  injective for $M\in\cala$, then
  $\mu_{G_i}(\alpha_i^{-1}(M))=\mu_G(M)$. Since every $f\in R$ is a
  finite linear combinations of characteristic functions of sets in
  $\cala$, we also obtain that
  \begin{equation}\label{eq:integral if injective}
    \int_{X^{G_i}}f\circ\alpha_i(z)d\mu_{G_i}(z)=\int_{X^G} f(x)d\mu_G(x)
  \end{equation}
  provided $\psi_i\vert_{F(f)}$ is injective.

  We obtain a ring homomorphism which respects the involutions:
  \[
  \beta_i\colon R\ast G\to L^\infty\bigl(X^{G_i}\bigr)\ast
  G_i,~\sum_{g\in G}f_g\cdot g\mapsto \sum_{g\in G}(f_g\circ
  \alpha_i)\cdot\psi_i(g).
  \]
  By applying this homomorphism entry-wise we obtain a ring
  homomorphism $M(d\times d',R\ast G)\to M(d\times
  d',L^\infty(X^{G_i})\ast G_i)$ that we denote by the same name.

\begin{lemma}\label{lem:sub-lemma; limit of elements}
  Let $A\in M(d\times d',R\ast G)$ and $\Delta=AA^\ast$.  Let
  $\Delta_i=\beta_i(A)\beta_i(A)^\ast$. Let $m\ge 1$.  Then
  \begin{equation}\label{eq:sofic limit of matrices}
    \tr_{\caln(G\action X^G)}(\Delta^m)=
    \lim_{i\rightarrow\infty}\tr_{\caln(G_i\action X^{G_i})}\bigl(\Delta_i^m\bigr),  
  \end{equation}
  and we have $\det_{\caln(G\action X^G)}(A)\ge 1$.
\end{lemma}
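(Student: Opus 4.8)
The plan is to derive both statements from the Approximation Lemma (Lemma~\ref{lem:Approximation Lemma}) applied with $\cala=\caln(G\action X^G)$, $\cala_i=\caln(G_i\action X^{G_i})$ and $A_i=\beta_i(A)$. Since each $\beta_i(A)$ is again a $d\times d'$-matrix, the dimensions are constant ($d_i=d$, $d_i'=d'$), so condition~(3) of that lemma is precisely the asserted trace identity~\eqref{eq:sofic limit of matrices}. Hence the real content is~\eqref{eq:sofic limit of matrices}: once it holds and the two remaining hypotheses of Lemma~\ref{lem:Approximation Lemma} are checked, the bound $\determ_{\caln(G\action X^G)}(A)\ge 1$ follows at once.

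To prove~\eqref{eq:sofic limit of matrices} I would first use that $R\ast G$ is a $\ast$-closed subring of $L^\infty(X^G,\bbZ)\ast G$, so $\Delta^m=(AA^\ast)^m$ again lies in $M(d\times d,R\ast G)$ and is a finite sum $\Delta^m=\sum_{g\in S}c_g\cdot g$ with $S\subset G$ finite and $c_g\in M(d\times d,R)$. As $\beta_i$ is a ring homomorphism respecting involutions, $\Delta_i^m=\beta_i(\Delta^m)=\sum_{g\in S}(c_g\circ\alpha_i)\cdot\psi_i(g)$. The trace of a crossed-product matrix is the integral of the pointwise matrix trace $\sum_k(\cdot)_{kk}$ of its identity coefficient, so $\tr_{\caln(G\action X^G)}(\Delta^m)=\int_{X^G}\sum_k(c_1)_{kk}\,d\mu_G$, while $\tr_{\caln(G_i\action X^{G_i})}(\Delta_i^m)=\sum_{g\in S,\ \psi_i(g)=1}\int_{X^{G_i}}\bigl(\sum_k(c_g)_{kk}\bigr)\circ\alpha_i\,d\mu_{G_i}$.

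The decisive step is then eventual injectivity: choosing a finite set $F\subset G$ that contains $S\cup\{1\}$ and $\bigcup_{g\in S}F(c_g)$, the fact that the structure maps jointly separate the points of $G$—together with the observation that in the inverse system separation of a pair by $\psi_i$ persists for all larger indices—yields some $i_0$ with $\psi_i|_F$ injective for $i\ge i_0$. For such $i$ the only $g\in S$ with $\psi_i(g)=1$ is $g=1$, so all off-diagonal contributions vanish, and $\psi_i$ is injective on $F(\sum_k(c_1)_{kk})$, whence~\eqref{eq:integral if injective} identifies the surviving integral over $X^{G_i}$ with $\int_{X^G}\sum_k(c_1)_{kk}\,d\mu_G$. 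Thus the net $\tr_{\caln(G_i\action X^{G_i})}(\Delta_i^m)$ is eventually constant and equal to $\tr_{\caln(G\action X^G)}(\Delta^m)$, a stationary—hence stronger—form of~\eqref{eq:sofic limit of matrices}, exactly as in the colimit case of part~(5). For conditions~(1) and~(2) of Lemma~\ref{lem:Approximation Lemma}: since $\beta_i(A)$ is a matrix over $L^\infty(X^{G_i},\bbZ)\ast G_i$, which embeds into the integral groupoid ring of $G_i\action X^{G_i}$ (Remark~\ref{orbit-equivalence-relation}), the hypothesis that this action satisfies MDC gives $\determ_{\caln(G_i\action X^{G_i})}(\beta_i(A))\ge 1$; and Lemma~\ref{lem: uniform bound on the norm} bounds $\|r_{\beta_i(A)}\|$ by $d\cdot d'\cdot\max_{k,l}\|\beta_i(A)_{k,l}\|_\infty$, which is independent of $i$ because precomposition with $\alpha_i$ does not increase the $\|\cdot\|_\infty$-norm of the finitely many coefficient functions of $A$, so $\|r_{\Delta_i}\|=\|r_{\beta_i(A)}\|^2$ and $\|r_\Delta\|$ are uniformly bounded. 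Lemma~\ref{lem:Approximation Lemma} then delivers $\determ_{\caln(G\action X^G)}(A)\ge1$.

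The main obstacle is the trace identity, and inside it the eventual injectivity of $\psi_i$ on the prescribed finite set: one must at once rule out that any nontrivial element of the support of $\Delta^m$ is collapsed to the identity by $\psi_i$ and—via~\eqref{eq:integral if injective}—ensure that the integral of the surviving identity coefficient is evaluated correctly on $X^{G_i}$. Both rest on the residually-amenable/inverse-limit structure, and it is crucial to allow the threshold $i_0$ to depend on $m$, since the support $S$ of $\Delta^m$ grows with $m$.
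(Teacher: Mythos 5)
Your proposal is correct and follows essentially the same route as the paper: both reduce everything to Lemma~\ref{lem:Approximation Lemma}, establish the trace identity~\eqref{eq:sofic limit of matrices} by showing the net is eventually \emph{stationary} via injectivity of $\psi_i$ on a finite set containing the support of $\Delta^m$ in $G$ together with the relevant coordinate sets $F(\cdot)$, and then invoke~\eqref{eq:integral if injective}, the MDC hypothesis for $G_i\action X^{G_i}$, and Lemma~\ref{lem: uniform bound on the norm}. The only (cosmetic) difference is that the paper organizes the trace computation by reducing first to products $a_1b_1^\ast\cdots a_mb_m^\ast$ and then to single terms $f\cdot g$ with $f\in R$, $g\in G$, whereas you expand $\Delta^m=\sum_{g\in S}c_g\cdot g$ directly in $M(d\times d,R\ast G)$; the underlying mechanism is identical.
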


\begin{proof}[Proof of lemma]
  By considering the matrix entries separately, the assertion reduces
  to showing that for any set of $2m$ elements $a_1,\dots, a_m,
  b_1,\dots,b_m\in R\ast G$ we have
  \begin{equation*}
    \tr_{\caln(G\action X^G)}(a_1b_1^\ast a_2b_2^\ast\cdots a_mb_m^\ast)=
    \lim_{i\rightarrow\infty}
    \tr_{\caln(G_i\action X^{G_i})}
    \bigl(\beta_i(a_1)\beta_i(b_1)^\ast\cdots\beta_i(a_m)\beta_i(b_m)^\ast\bigr). 
  \end{equation*}
  Since $\beta_i$ is a ring $\ast$-homomorphism, the assertion reduces
  further to showing that for $f\in R$ and $g\in G$ we have
  \begin{equation*}
    \tr_{\caln(G\action X^G)}(f\cdot g)=\lim_{i\to\infty}\tr_{\caln(G_i\action X^{G_i})}(\beta_i(f\cdot g)). 
  \end{equation*}
  Choose $i_0\in I$ such that $\psi_i\vert_{F(f)\cup\{1,g\}}$ is
  injective for $i\ge i_0$.  Then~\eqref{eq:integral if injective}
  yields that
  \[\tr_{\caln(G\action X^G)}(f\cdot g)=\tr_{\caln(G_i\action
    X^{G_i})}(\beta_i(f\cdot g))\] for $i\ge i_0$.  This concludes the
  proof of~\eqref{eq:sofic limit of matrices}.  By Lemma~\ref{lem:
    uniform bound on the norm} there is an upper bound
  $\norm{r_{\Delta_i^m}}$ that is independent of $i$.  By hypothesis,
  $\determ_{\caln(G_i\action X^{G_i})}(\beta_{i}(A)^m)\ge 1$.
  Finally, Lemma~\ref{lem:Approximation Lemma} implies that
  $\det_{\caln(G\action X^G)}(A)\ge 1$.
\end{proof}

We need a general fact before we can complete the proof: Let
$f_1,f_2\dots, f_m\in L^\infty(X)$. For every $1\le i\le m$ let
$f_i^{(n)}\in L^\infty(X)$ be a sequence such that there is a constant
$C>0$ with $\fixnorm{f_i^{(n)}}_{L^\infty(X)}<C$ and
$\lim_{n\to\infty}\fixnorm{f_i-f_i^{(n)}}_{L^1(X)}=0$.  Then:
\begin{equation}\label{eq:l1 limit of functions}
  \lim_{n\rightarrow\infty}\fixnorm{f_1\cdots f_m-f_1^{(n)}\cdots f_m^{(n)}}_{L^1(X)}=0.
\end{equation}
This follows from an iterated application of the corresponding
assertion for $m=2$. For $m=2$ we have:
\begin{align*}
  \int_X&\fixabs{f_1(x)f_2(x)-f_1^{(n)}(x)f_2^{(n)}(x)}d\mu(x)\\
  &\le\fixnorm{f_2}_{L^\infty(X)}\int_X\fixabs{f_1(x)-f_1^{(n)}(x)}d\mu(x)+
  \fixnorm{f_1^{(n)}}_{L^\infty(X)}\int_X\fixabs{f_2(x)-f_2^{(n)}(x)}d\mu(x)\\
  &\le
  \fixnorm{f_2}_{L^\infty(X)}\cdot\fixnorm{f_1-f_1^{(n)}}_{L^1(X)}+C\cdot\fixnorm{f_2-f_2^{(n)}}_{L^1(X)}\xrightarrow{n\rightarrow\infty}
  0.
\end{align*}

Now we can complete the proof of Theorem~\ref{thm:precise MDC result}
(6).  For any $S \in {\mathcal S}$ and any $\epsilon > 0$ there exists
a set $M \in {\mathcal A}$ with $\mu_G(S \triangle M)<\epsilon$. Since
every element in $f\in L^\infty(X,\bbZ)$ is a finite linear
combination of characteristic functions, there exist functions
$f^{(n)}\in R$ associated to $f$ such that
$\norm{f^{(n)}}_{L^\infty(X)}\le\norm{f}_{L^\infty(X)}$ for all $n\ge
1$ and $\norm{f-f^{(n)}}_{L^1(X)}<1/n$.  Let $A\in M(d\times
d',L^\infty(X)\ast G)$. Let $B^{(n)}\in M(d\times d',R\ast G)$ be the
matrix obtained from $A$ by replacing each entry $\sum f_g\cdot g$ with
$\sum f_g^{(n)}\cdot g$.  Let $\Delta=AA^\ast$ and
$\Delta_n=B^{(n)}(B^{(n)})^\ast$.  For every $m\ge 1$ we have
\begin{equation}\label{eq:approximation by matrices in R}
  \tr_{\caln(G\action X)}(\Delta^m)=
  \lim_{n\rightarrow\infty}\tr_{\caln(G\action X)}(\Delta_n^m). 
\end{equation}
By considering matrix entries separately as in the proof of
Lemma~\ref{lem:sub-lemma; limit of elements}, this easily follows
from~\eqref{eq:l1 limit of functions} and the $G$-invariance of the
measure.  By Lemma~\ref{lem: uniform bound on the norm},
$\norm{r_{\Delta_n^m}}$ has an upper bound independent of
$n$. Lemma~\ref{lem:sub-lemma; limit of elements} and
Lemma~\ref{lem:Approximation Lemma} complete the proof.
\end{proof}

\section{Proofs of Theorems~\ref{thm:MDC implies det class for ME
    groups} and~\ref{thm:invariance under uniform ME}}
\label{sec:uniform measure equivalence invariance}

The following two results are proved in first author's
book~\cite{Luc2} for group von Neumann algebras. We need them for
arbitrary finite von Neumann algebras; the proof translates literally
to the general case.

\begin{proposition}[\cite{Luc2}*{Theorem 3.35 (5) on p.~142}]\label{prop-cone}
  Let $\cala$ be a finite von Neumann algebra with a fixed trace.  Let
  $\phi_*\colon C_* \to D_*$ be a chain map of finitely generated
  Hilbert $\cala$-chain complexes.  Suppose that $b_n^{(2)}(C_*) =
  b_n^{(2)}(D_*) = 0$ and $\det_{\cala}(c_n)$, $\det_{\cala}(d_n) > 0$
  for all $n \in \Z$.  Then the mapping cone $\cone_*(\phi_*)$ is also
  a finitely generated Hilbert $\cala$-chain complex with vanishing
  $L^2$-Betti numbers and positive determinants of his differentials.
  Moreover, we obtain the equation
  \[
  \rho^{(2)}\big(\cone_*(\phi_*)\big) = \rho^{(2)}(D_*) -
  \rho^{(2)}(C_*).
  \]
\end{proposition}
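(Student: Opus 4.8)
The plan is to prove that for a chain map $\phi_*\colon C_* \to D_*$, the mapping cone $\cone_*(\phi_*)$ inherits the three defining properties (dim-finiteness, vanishing $L^2$-Betti numbers, positive determinants of differentials) and then to compute its $L^2$-torsion via the torsion of the long exact sequence / short exact sequence of chain complexes relating $C_*$, $D_*$, and the cone.

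First I would recall the construction of the mapping cone: $\cone_n(\phi_*) = C_{n-1} \oplus D_n$ with differential $c^{\cone}_n = \begin{pmatrix} -c_{n-1} & 0 \\ \phi_{n-1} & d_n \end{pmatrix}$. Dim-finiteness of $\cone_*(\phi_*)$ is immediate since $\dim_\cala$ is additive over direct sums and both $C_*$ and $D_*$ are dim-finite. There is a short exact sequence of Hilbert $\cala$-chain complexes
\[
0 \to D_* \to \cone_*(\phi_*) \to \Sigma C_* \to 0,
\]
where $\Sigma C_*$ is the suspension (shift) of $C_*$. Applying the long weakly exact homology sequence in the $L^2$-setting and using $b_n^{(2)}(C_*)=b_n^{(2)}(D_*)=0$ forces $b_n^{(2)}(\cone_*(\phi_*))=0$ for all $n$; this is the standard fact that a weakly exact sequence with two of three terms $L^2$-acyclic makes the third $L^2$-acyclic.

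Next I would establish positivity of $\det_\cala$ of the cone differentials and the torsion formula simultaneously. The key input is the multiplicativity of the Fuglede-Kadison determinant and the sum formula for $L^2$-torsion applied to the above short exact sequence, together with the observation that $\rho^{(2)}(\Sigma C_*) = -\rho^{(2)}(C_*)$ because suspension shifts the chain degree by one and thus flips the sign in the alternating sum $\rho^{(2)}(C_*) = -\sum_n (-1)^n \ln\determ_\cala(c_n)$. Since $\rho^{(2)}(D_*)$ is defined (finite determinants, positive), the additivity of torsion over a short exact sequence of $L^2$-acyclic complexes of determinant class yields
\[
\rho^{(2)}\big(\cone_*(\phi_*)\big) = \rho^{(2)}(D_*) + \rho^{(2)}(\Sigma C_*) = \rho^{(2)}(D_*) - \rho^{(2)}(C_*),
\]
which is exactly the claim. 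The positivity $\det_\cala(c^{\cone}_n)>0$ comes out of the same sum formula, since the torsion of the short exact sequence itself contributes a finite, well-defined quantity precisely when all determinants in sight are positive.

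The main obstacle I expect is justifying the additivity (sum) formula for $L^2$-torsion over the short exact sequence in the weakly exact, rather than exact, setting: one must control the torsion of the homology long exact sequence, which is trivial here only because all $L^2$-Betti numbers vanish, but the determinant-class bookkeeping still requires care. Concretely, the nontrivial step is to show that the determinants of the connecting differentials behave multiplicatively and that no "hidden" torsion enters from the weakly-exact homology sequence. I would handle this by invoking the general sum formula for $L^2$-torsion of short exact sequences of finite Hilbert $\cala$-chain complexes (the analogue of~\cite{Luc2}*{Theorem~3.35}), whose proof for arbitrary finite von Neumann algebras is formally identical to the group von Neumann algebra case, as the only ingredients are additivity of $\dim_\cala$, normality of the trace, and multiplicativity of $\determ_\cala$ — all of which hold for any finite von Neumann algebra $\cala$.
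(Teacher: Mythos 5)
Your proposal is correct and follows essentially the same route as the paper, which gives no independent proof here but simply cites L\"uck's Theorem~3.35 and observes that its proof carries over verbatim from $\caln(G)$ to an arbitrary finite von Neumann algebra $\cala$. Your derivation of the mapping-cone statement from the sum formula applied to the degreewise orthogonally split sequence $0\to D_*\to\cone_*(\phi_*)\to\Sigma C_*\to 0$, with the long weakly exact homology sequence handling the Betti numbers and the sign flip $\rho^{(2)}(\Sigma C_*)=-\rho^{(2)}(C_*)$, is exactly how part~(5) of that theorem is obtained from the sum formula in the cited source.
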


\begin{lemma}[\cite{Luc2}*{Lemma~3.41 on p.~146}]\label{lem-acyclic}
  Let $C_*$ be a finitely generated Hilbert $\cala$-chain complexes
  and $\gamma_*$ a chain contraction. Then $b_n^{(2)}(C_*) = 0$ for
  all $n \in \Z$.  If $\det_{\cala}(c_n) > 0$ for all $n \in \Z$ then
  \[
  \rho^{(2)}(C_*) = \ln\determ_{\cala}\Bigl((c_* + \gamma_*)_{\odd}:
  \oplus_{n \in \Z}~C_{2n+1} \to \oplus_{n \in \Z}~C_{2n}\Bigr).
  \]
\end{lemma}

\begin{definition}\label{def:proj-based}
  Let $R$ be a ring with involution. A \emph{finitely generated based
    free $R$-module} is a finitely generated free $R$-module together
  with an isomorphism $M\cong R^n$. A \emph{finitely generated based
    projective $R$-module} $P$ is a finitely generated projective
  $R$-module $P$ together with an isomorphism $P\cong R^n A$ where
  $A\in M(n\times n,R)$ satisfies $A^2=A$ and $A^\ast=A$.
\end{definition}

\begin{remark}\label{rem: bases and restriction}
  Consider a standard action $G\action X$ and a measurable subset
  $A\subset X$ such that there is a finite subset $S\subset G$ with
  $S\cdot A=X$ up to null sets.  We can find (and fix for the
  following discussion) measurable subsets $A_g\subset A$ for each
  $g\in S$ such that the sets $gA_g$ partition $X$.  We obtain the
  isomorphism of left $\chi_A \bigl(L^\infty(X,\bbZ)\ast G\bigr)\chi_A$-modules 
  \begin{equation}\label{eq:fixed iso}
    \phi:\chi_A \bigl(L^\infty(X,\bbZ)\ast G\bigr)\xrightarrow{\cong} 
    \bigoplus_{g\in S}\chi_A \bigl(L^\infty(X,\bbZ)\ast G\bigr)\chi_{A_g},~ \phi(x)=\sum_{g\in S}xg\chi_{A_g}. 
  \end{equation}
  If $\psi:F\xrightarrow{\cong} (L^\infty(X,\bbZ)\ast G)^n$ is a
  finitely generated based free module, then $\chi_A F$ becomes a
  finitely generated based projective module over the ring $\chi_A
  L^\infty(X,\bbZ)\ast G\chi_A$ by
  \[\chi_A F\xrightarrow{\chi_A\psi}\bigl(\chi_A L^\infty(X,\bbZ)\ast
  G\bigr)^n \xrightarrow{\oplus_{i=1}^n\oplus_S\phi} \bigl(\chi_A
  L^\infty(X,\bbZ)\ast G\chi_A\bigr)^{\abs{S}n} Q,
  \]
  where is $Q$ is the projection matrix
  \[
  Q=\diag((\chi_{A_g})_{g\in S})\oplus\dots\oplus
  \diag((\chi_{A_g})_{g\in S}).
  \]

  Let $P\xrightarrow{\cong} (\chi_A L^\infty(X,\bbZ)\ast G\chi_A)^n
  Q'$ be a finitely generated based projective $\chi_A
  L^\infty(X,\bbZ)\ast G\chi_A$-module.  Then $L^2(G\action X\vert
  A)\otimes_{\chi_A L^\infty(X,\bbZ)\ast G\chi_A} P$ is isomorphic to
  the image of the orthogonal projection
  \[L^2(G \curvearrowright X|_A)^n\rightarrow L^2(G \curvearrowright
  X|_A)^n, x\mapsto xQ',
  \]
  and obtains the structure of a Hilbert $\caln(G\action X\vert
  A)$-module from this isomorphism.
	
  Let $F$ be a finitely generated based free module over the ring
  $L^\infty(X,\bbZ)\ast G$. Since $\chi_A$ is full, there is an
  obvious isomorphism
  \begin{equation}\label{eq:two Hilbert structures}
    \chi_A L^2(G\action X)\otimes_{L^\infty(X,\bbZ)\ast G}F\cong 
    \chi_A L^2(G\action X)\chi_A\otimes_{\chi_A L^\infty(X,\bbZ)\ast G\chi_A}\chi_A F.  
  \end{equation}
  If $V$ is any Hilbert $\caln(G\action X)$-module, then $\chi_A V$
  becomes a Hilbert $\caln(G\action X\vert_A)$-module
  (see~\cite{jones}*{pp.~19-27}).  Since $L^2(G\action
  X)\otimes_{L^\infty(X,\bbZ)\ast G}F$ is a Hilbert $\caln(G\action
  X)$-module through the free basis of $F$, the left hand side
  in~\eqref{eq:two Hilbert structures} is a Hilbert $\caln(G\action
  X\vert_A)$-module. On the other hand, the right hand side
  in~\eqref{eq:two Hilbert structures} is a Hilbert $\caln(G\action
  X\vert_A)$-module since $\chi_A F$ is based projective. The
  isomorphism~\eqref{eq:two Hilbert structures} is an isomorphism of
  Hilbert $\caln(G\action X\vert_A)$-modules.
\end{remark}

\begin{lemma}\label{lem:MDC for projective modules} 
  Let $G\action (X,\mu)$ be an essentially free, standard action and
  $A\subset X$ a measurable subset such there is a finite subset
  $F\subset G$ with $FA=X$ up to null sets. Then the following
  statements are equivalent:
  \begin{enumerate}
  \item $G\action X$ satisfies MDC.
  \item For every homomorphism of finitely generated based projective
    $L^\infty(X,\bbZ)\ast G$-modules $f\colon P\rightarrow Q$ the associated
    Hilbert $\caln(G\action X)$-morphism $L^2(G\action
    X)\otimes_{L^\infty(X,\bbZ)\ast G} f$ has determinant $\ge 1$.
  \item For every homomorphism of finitely generated based projective
    $\chi_AL^\infty(X,\bbZ)\ast G\chi_A$-modules $f\colon P\rightarrow Q$
    the associated Hilbert $\caln(G\action X\vert_A)$-morphism \\
    $L^2(G\action X\vert_A)\otimes_{\chi_AL^\infty(X,\bbZ)\ast
      G\chi_A} f$ has determinant $\ge 1$.
  \end{enumerate}
\end{lemma}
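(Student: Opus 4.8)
The plan is to establish the cycle $(2)\Rightarrow(1)$, $(1)\Rightarrow(2)$, together with the equivalence $(2)\Leftrightarrow(3)$. For $(2)\Rightarrow(1)$ there is essentially nothing to do: a finitely generated based \emph{free} module is the special case of Definition~\ref{def:proj-based} in which the projection is the identity matrix, so $(2)$ applied to a morphism $r_B\colon(L^\infty(X,\bbZ)\ast G)^m\to(L^\infty(X,\bbZ)\ast G)^n$ given by a matrix $B$ yields $\determ_{\caln(G\action X)}(r_B)\ge 1$. This is precisely the hypothesis of Lemma~\ref{lem:can restrict to crossed product ring}, which then delivers MDC.

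For $(1)\Rightarrow(2)$ I would argue by a padding trick. Write $P\cong R^mA_P$ and $Q\cong R^nA_Q$ with $R=L^\infty(X,\bbZ)\ast G$ and projections $A_P,A_Q$ over $R$, and represent $f$ by a matrix $B\in M(m\times n,R)$ with $B=A_PBA_Q$. Using $\determ(\bar f)=\determ(\bar f^\ast\bar f)^{1/2}$ (as in Lemma~\ref{lem:positive is enough}) I reduce to the positive self-map $\bar f^\ast\bar f=r_{BB^\ast}\vert_{\im r_{A_P}}$ of $L^2(G\action X)\otimes P=\im r_{A_P}\subset L^2(G\action X)^m$, and then extend it by the identity on the orthogonal complement $\im r_{I_m-A_P}$. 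Since $BB^\ast=A_PBB^\ast A_P$, this extension is exactly right multiplication $r_M$ with $M=BB^\ast+(I_m-A_P)$, which is crucially an \emph{integral} crossed-product matrix, because the involution preserves $\bbZ$-coefficients and $A_P$ has entries in $R$. Multiplicativity of the determinant over the orthogonal decomposition and $\determ(\id)=1$ give $\determ_{\caln(G\action X)}(r_M)=\determ_{\caln(G\action X)}(\bar f)^2$, while MDC forces $\determ(r_M)\ge 1$ (the entries of $M$ lie in $L^\infty(X,\bbZ)\ast G\subset\bbZ\calr$ by Remark~\ref{orbit-equivalence-relation}); hence $\determ(\bar f)\ge 1$.

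For $(2)\Leftrightarrow(3)$ I would invoke Morita theory. The hypothesis $FA=X$ makes $\chi_A$ a full idempotent in $R$ (Lemma~\ref{lem:full idempotent}), so the corner functor $P\mapsto\chi_AP$ is a Morita equivalence between finitely generated based projective modules over $R$ and over $\chi_AR\chi_A$ and induces a bijection on morphisms (Remark~\ref{rem: bases and restriction}). Under this correspondence the isomorphism~\eqref{eq:two Hilbert structures} identifies the Hilbert $\caln(G\action X\vert_A)$-module attached to $\chi_AP$ with $\chi_A$ applied to the Hilbert $\caln(G\action X)$-module attached to $P$, and identifies $L^2\otimes\chi_Af$ with the morphism induced by $L^2\otimes f$ over the corner $\chi_A\caln(G\action X)\chi_A=\caln(G\action X\vert_A)$ (Remark~\ref{rem:restricted groupoid ring}). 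The restriction formula~\eqref{eq:det under restriction}, together with the spectral-density identity $F=\tr(\chi_A)\,F\vert_{\chi_A}$ of Remark~\ref{rem:restriction of von Neumann algebras} which transfers positivity of the determinant across the corner, then gives $\ln\determ_{\caln(G\action X)}(L^2\otimes f)=\mu(A)\cdot\ln\determ_{\caln(G\action X\vert_A)}(L^2\otimes\chi_Af)$. Since $\mu(A)=\tr(\chi_A)>0$, the property $\determ\ge 1$ holds on one side if and only if it holds on the other, which yields both implications at once.

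I expect the genuine bookkeeping to lie in $(2)\Leftrightarrow(3)$: one must check that the Morita correspondence matches \emph{based} projective presentations rather than mere isomorphism classes, and that the restriction formula genuinely applies, i.e.\ that the determinant is positive before one passes to logarithms — the latter being supplied by the spectral-density identity instead of being assumed. By contrast, the padding trick in $(1)\Rightarrow(2)$ is the conceptual heart of the argument but, once the integral matrix $M$ is identified, amounts to a short computation.
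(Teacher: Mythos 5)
Your proposal is correct and follows essentially the same route as the paper: reduce based projective morphisms to matrices over the crossed product ring by padding, and pass between $X$ and $A$ via the fullness of $\chi_A$ (Lemma~\ref{lem:full idempotent}) together with the restriction identity $\ln\determ_{\caln(G\action X)}=\mu(A)\cdot\ln\determ_{\caln(G\action X\vert_A)}$. The only cosmetic difference is in $(1)\Rightarrow(2)$, where the paper pads $f$ by zero to a map of based free modules (using that the Fuglede--Kadison determinant ignores kernels, so $\determ(\bar f)=\determ(\bar f\oplus 0)$), whereas you pad $\bar f^\ast\bar f$ by the identity to the integral matrix $BB^\ast+(I_m-A_P)$; both are valid.
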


\begin{proof}
  (1) implies (2): Every homomorphism $f\colon P\rightarrow Q$ between
  finitely generated based projective $L^\infty(X,\bbZ)\ast G$-modules
  can be extended by zero to an homomorphism
  \[P\oplus P'\xrightarrow{f\oplus 0}Q\oplus Q'\] of finitely
  generated based free modules. We have $\determ_{\caln(G\action
    X)}(f)=\determ_{\caln(G\action X)}(f\oplus 0)$, and
  the latter is $\ge 1$ since $G$ satisfies MDC. \\
  \noindent (2) implies (3): By a similar argument, it is enough to
  show that, for every homomorphism of finitely generated based
  \emph{free} $\chi_AL^\infty(X,\bbZ)\ast G\chi_A$-modules
  $f:P\rightarrow Q$, the associated Hilbert $\caln(G\action
  X\vert_A)$-morphism $L^2(G\action
  X\vert_A)\otimes_{\chi_AL^\infty(X,\bbZ)\ast G\chi_A} f$ has
  determinant $\ge 1$. This is equivalent to: For every matrix $B\in
  M(m\times n,\chi_A L^\infty(X,\bbZ)\ast G\chi_A)$ we have
  $\determ_{\caln(\calr)}(B)\ge 1$. We can view such $B$ also as an
  element in $M(m\times n, L^\infty(X,\bbZ)\ast G)$. It is obvious
  (just check on polynomials) that the spectral density function of
  $B$ with respect to $\caln(G\action X)$ is just $\mu(A)$ times the
  spectral density function with respect to $\caln(G\action
  X\vert_A)$.  Assuming (2), we now see that $\determ_{\caln(G\action
    X\vert_A)}(B)\ge 1$ and
  \[
  \ln\determ_{\caln(G\action
    X)}(B)=\mu(A)\cdot\ln\determ_{\caln(G\action X\vert_A)}(B).
  \]
  \noindent (3) implies (1): Let $f:F\rightarrow F'$ be a homomorphism
  of finitely generated based free $L^\infty(X,\bbZ)\ast
  G$-modules. Since $\chi_A$ is full by Lemma~\ref{lem:full
    idempotent}, $\chi_A F$ and $\chi_A F'$ are finitely generated
  (based) projective $\chi_AL^\infty(X,\bbZ)\ast G\chi_A$-modules.  By
  (3) and Remark~\ref{rem: bases and restriction} the map
  \[
  \chi_A L^2(G\action X)\otimes f: \chi_A L^2(G\action
  X)\otimes_{L^\infty(X,\bbZ)\ast G}F\rightarrow \chi_A L^2(G\action
  X)\otimes_{L^\infty(X,\bbZ)\ast G}F'
  \]
  has determinant $\ge 1$. Thus, by Remark~\ref{rem:restriction of von
    Neumann algebras}, the map $L^2(G\action X)\otimes f$ has
  determinant $\ge 1$. By Lemma~\ref{lem:can restrict to crossed
    product ring} this suffices to show that $G\action X$ satisfies MDC.
\end{proof}

We omit the proof of the following lemma which is essentially the same
as that of Lemma~\ref{lem:MDC for projective modules}.

\begin{lemma}\label{lem:MDC for projective modules and groupoid ring} 
  Let $\calr$ be a standard equivalence relation on $(X,\mu)$. Let
  $A\subset X$ be a measurable subset such that $\chi_A$ is a full
  idempotent in $\bbZ\calr$.  Then the following statements are
  equivalent:
  \begin{enumerate}
  \item $\calr$ satisfies MDC.
  \item For every homomorphism of finitely generated based projective
    $\bbZ\calr$-modules $f\colon P\rightarrow Q$ the associated
    Hilbert $\caln(\calr)$-morphism $L^2(\calr)\otimes_{\bbZ\calr} f$
    has determinant $\ge 1$. ´
  \item $\calr\vert_A$ satisfies MDC.
  \item For every homomorphism of finitely generated based projective
    $\bbZ\calr\vert_A$-modules $f\colon P\rightarrow Q$ the associated
    Hilbert $\caln(\calr)$-morphism
    $L^2(\calr)\otimes_{\bbZ\calr\vert_A} f$ has determinant $\ge 1$.
  \end{enumerate}
\end{lemma}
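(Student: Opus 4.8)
The plan is to transcribe the proof of Lemma~\ref{lem:MDC for projective modules}, replacing the crossed product ring by the groupoid ring $\bbZ\calr$ and its corner $\bbZ\calr\vert_A=\chi_A\bbZ\calr\chi_A$ (Remark~\ref{rem:restricted groupoid ring}), which also identifies $\caln(\calr\vert_A)$ with $\chi_A\caln(\calr)\chi_A$. Two ingredients drive the whole argument. The first is that a morphism $f\colon P\to Q$ of finitely generated based projective modules over a ring $S$ extends by zero to a morphism $f\oplus 0$ of based free modules; since the Fuglede--Kadison determinant of a zero morphism is $1$ and the determinant is multiplicative under orthogonal sums, the associated Hilbert-module morphism has the same determinant as $f\oplus 0$, i.e.\ as a matrix over $S$. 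The second is the restriction formula of Remark~\ref{rem:restriction of von Neumann algebras}: since $\chi_A$ is, by hypothesis, a full idempotent in $\bbZ\calr$ and hence a full projection in $\caln(\calr)$, for any matrix $B$ over $\bbZ\calr\vert_A\subset\bbZ\calr$ the spectral density function of $r_B$ computed over $\caln(\calr)$ equals $\mu(A)=\tr_{\caln(\calr)}(\chi_A)$ times the one computed over $\caln(\calr\vert_A)=\chi_A\caln(\calr)\chi_A$, whence $\ln\determ_{\caln(\calr)}(B)=\mu(A)\cdot\ln\determ_{\caln(\calr\vert_A)}(B)$ and, as $\mu(A)>0$, $\determ_{\caln(\calr)}(B)\ge 1$ if and only if $\determ_{\caln(\calr\vert_A)}(B)\ge 1$.

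Granting these, the equivalence $(1)\Leftrightarrow(2)$ is immediate from the first ingredient alone: a morphism of based free $\bbZ\calr$-modules is right multiplication by a matrix over $\bbZ\calr$, so statement $(2)$ for free modules is precisely the definition of MDC for $\calr$, and extension by zero upgrades free to projective. For $(1)\Leftrightarrow(3)$ I would argue as follows. The implication $(1)\Rightarrow(3)$ is direct: a matrix over $\bbZ\calr\vert_A$ is in particular a matrix over $\bbZ\calr$, so MDC for $\calr$ bounds its $\caln(\calr)$-determinant, and the second ingredient transports the bound to $\caln(\calr\vert_A)$, which is MDC for $\calr\vert_A$. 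The converse $(3)\Rightarrow(1)$ is the substantial step: given an arbitrary matrix $C\in M(n\times n,\bbZ\calr)$, viewed as a morphism $r_C\colon F\to F'$ of based free $\bbZ\calr$-modules, fullness of $\chi_A$ together with the groupoid-ring analog of Remark~\ref{rem: bases and restriction} makes $\chi_AF$ and $\chi_AF'$ finitely generated based projective $\bbZ\calr\vert_A$-modules and realizes $\chi_A L^2(\calr)\otimes r_C$ as a morphism of the associated Hilbert $\caln(\calr\vert_A)$-modules; by $(3)$, equivalently by statement $(4)$, this morphism has determinant $\ge 1$, and the restriction formula then returns $\determ_{\caln(\calr)}(C)\ge 1$.

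It remains to record $(3)\Leftrightarrow(4)$, which uses both ingredients because statement $(4)$ is phrased through induction to $\caln(\calr)$ along $L^2(\calr)\otimes_{\bbZ\calr\vert_A}(-)$ rather than through $\caln(\calr\vert_A)$ directly. Extension by zero reduces $(4)$ to the assertion that every matrix $B$ over $\bbZ\calr\vert_A$ satisfies $\determ_{\caln(\calr)}(L^2(\calr)\otimes r_B)\ge 1$; since $B=\chi_AB\chi_A$, the induced operator is $r_B$ on $L^2(\calr)^n$ (its part on the orthogonal complement being zero), whose $\caln(\calr)$-determinant is linked to the $\caln(\calr\vert_A)$-determinant appearing in $(3)$ by exactly the $\mu(A)$-scaling of the second ingredient.

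The main obstacle is the cornering step in $(3)\Rightarrow(1)$: one must check that the Morita-theoretic mechanism behind Remark~\ref{rem: bases and restriction}, established there for the crossed product $L^\infty(X,\bbZ)\ast G$, carries over verbatim to $\bbZ\calr$ with $\chi_A\bbZ\calr\chi_A=\bbZ\calr\vert_A$ as the corner, so that compression by the full idempotent $\chi_A$ indeed sends finitely generated based free $\bbZ\calr$-modules to finitely generated based projective $\bbZ\calr\vert_A$-modules compatibly with the Hilbert-module completions. Everything else is routine bookkeeping of spectral density functions through the factor $\mu(A)$.
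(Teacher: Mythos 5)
Your proposal is correct and follows exactly the route the paper intends: the paper explicitly omits the proof of this lemma, stating that it is ``essentially the same'' as that of Lemma~\ref{lem:MDC for projective modules}, and your transcription --- extension by zero to pass between based projective and based free modules, the $\mu(A)$-scaling of spectral density functions from Remark~\ref{rem:restriction of von Neumann algebras} for the full idempotent $\chi_A$, and the cornering mechanism of Remark~\ref{rem: bases and restriction} adapted to $\bbZ\calr\vert_A=\chi_A\bbZ\calr\chi_A$ --- reproduces that argument faithfully, including correctly flagging the Morita step for the groupoid ring as the only point needing verification.
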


\begin{proof}[Proof of Theorem~\ref{thm:MDC implies det class for ME groups}]
  By Theorem~\ref{thm:ME implies OE} there exist essentially free,
  standard actions $G\action (X,\mu_X)$ and $H\action (Y,\mu_Y)$ that
  are weakly orbit equivalent. Let $f\colon B \to A$ be a weak orbit
  equivalence between measurable subsets $A\subset X$ and $B\subset
  Y$. By~\cite{Fur1}*{Lemma~2.2} we may assume that both actions
  are ergodic. The map $f$ induces an isomorphism between the
  restricted orbit equivalence relations $\calr(G\action X)\vert_A$
  and $\calr(H\action Y)\vert_B$.  By hypothesis, $\calr(G\action X)$
  satisfies MCD. That $\calr(H\action Y)$ satisfies MDC, thus $H$ satisfies the determinant 
  conjecture, follows by applying Lemma~\ref{lem:MDC
    for projective modules and groupoid ring} twice provided that
  $\chi_A\in\bbZ\calr(G\action X)$ and $\chi_B\in\bbZ\calr(H\action
  Y)$ are full idempotents. For that, we prove in general that
  $\chi_A\in\bbZ\calr$ is a full idempotent if $\calr$ is an ergodic
  standard equivalence relation and $A\subset X$ is a measurable
  subset of positive measure: Let $X=\bigcup_{i=1}^{n+1} A_i$ be a
  Borel partition of $X$ such that $A_1,\dots, A_n$ all have the same
  measure as $A$ and $\mu(A_{n+1})\le\mu(A_n)$. By ergodicity
  and~\cite{Fur3}*{Lemma~2.1} there are measure isomorphisms
  $\phi_i:X\rightarrow X$ for each $1\le i\le n+1$ such that
  $(x,\phi_i(x))\in\calr$ for $x\in X$ and $\phi_i(A)=A_i$ for $1\le
  i<n$ and $A_{n+1}\subset\phi_{n+1}(A)$. The characteristic function
  $\chi_{\phi_i}$ of the graph of each $\phi_i$ is an element in
  $\bbZ\calr$.  From the properties of $\phi_i$ we obtain that
  \[
  1=\chi_X=\sum_{i=1}^n
  \chi_{\phi_i}^\ast\chi_A\chi_{\phi_i}+\chi_{\phi_{n+1}}^\ast\chi_A\chi_{\phi_{n+1}}\chi_{\im(\phi_{n+1})}.
  \]
  Thus, $\chi_A$ is a full idempotent.
\end{proof}

\begin{proof}[Proof of Theorem~\ref{thm:invariance under uniform ME}]
  Let $Z_G$ and $Z_H$ be finite CW-models of the classifying spaces
  $BG$ and $BH$, respectively.  By Theorem~\ref{thm:ME implies OE}
  there exist essentially free, standard actions $G\action (X,\mu_X)$
  and $H\action (Y,\mu_Y)$ that are uniformly weakly orbit equivalent.
  Let $f\colon B \to A$ be a uniform weak orbit equivalence between
  measurable subsets $A\subset X$ and $B\subset Y$.  It induces a
  trace-preserving ring isomorphism $f_*\colon \chi_A
  L^\infty(X,\Z)*G\chi_A \to \chi_B L^\infty(Y,\Z)*H \chi_B$
  (Lemma~\ref{lem:uniform OE induces ring iso}) which extends to an
  isometry $f_*: L^2(G \curvearrowright X|_A) \to L^2(H
  \curvearrowright Y|_B)$.  By assumption, $G$ satisfies MDC. Because
  of the previous ring isomorphism and by applying Lemma~\ref{lem:MDC
    for projective modules} twice it follows that $H\action Y$
  satisfies MDC; in particular, $H$ satisfies the determinant
  conjecture.

  The cellular chain complex $C^{\cell}_*({\tilde Z_G})$ of the
  universal cover $\tilde{Z_G}$ is a finite based free $\Z
  G$-resolution of $\Z$.  By successively applying~\eqref{eq: det
    under induction} and~\eqref{eq:det under restriction} to the
  differentials of $l^2(G)\otimes_{\bbZ G}C^{\cell}_*({\tilde Z_G})$,
  we obtain that
  \[
  \torsion\big(\chi_A L^2(G \curvearrowright X) \otimes_{\Z G}
  C^{\cell}_*({\tilde Z_G})\big) = \frac{\torsion(G)}{\mu_X(A)}.
  \]
  The $\chi_A L^\infty(X,\Z)*G\chi_A$-chain complex
  \[C(G)_* := \chi_A L^\infty(X,\Z)*G\otimes_{\Z G}
  C^{\cell}_*({\tilde Z_G})\] is a finite resolution by based
  projective modules of $\chi_A L^\infty(X,\Z) = L^\infty(A,\Z)$
  by~Lemma~\ref{lem:flat} and Lemma~\ref{lem:full idempotent}.
  Furthermore, we define:
  \[C(G)^{(2)}_* := L^2(G \curvearrowright X|_A) \otimes_{\chi_A
    L^\infty(X,\Z)*G\chi_A} C(G)_\ast.\] Since $\chi_A$ is full
  (Lemma~\ref{lem:full idempotent}),
  \[C(G)^{(2)}_*\cong \chi_A L^2(G \curvearrowright X) \otimes_{\Z G}
  C^{\cell}_*({\tilde Z_G}).\] Hence,
  \begin{equation*}
    \rho^{(2)}\big(C(G)^{(2)}_*\big) = 
    \frac{\rho^{(2)}(G)}{\mu_X(A)}.
  \end{equation*}
  By replacing $G\action X$ by $H\action Y$ and $A$ by $B$, we define
  $C(H)_\ast$ and $C(H)^{(2)}_\ast$ in an analogous fashion.  A
  similar discussion as before applies; so $C(H)_\ast$ is a finite
  projective resolution of $L^\infty(B)$ over the ring
  $\chi_BL^\infty(Y,\Z)*H\chi_B$, and
  \[
  \torsion\bigl(C(H)^{(2)}\bigr)=\frac{\torsion(H)}{\mu_Y(B)}.
  \]
  It remains to prove that
  \begin{equation}\label{eq:to prove}
    \torsion\bigl(C(G)^{(2)}\bigr)=\torsion\bigl(C(H)^{(2)}\bigr). 
  \end{equation}
  Via the ring isomorphism $f_\ast$ we obtain $\chi_B
  L^\infty(Y,\Z)*H\chi_B$-module structures on $C(G)_\ast$ and
  indicate this by writing $f_*(C(G)_*)$. So $f_*(C(G)_*)$ is a finite
  projective resolution of $L^\infty(B)$ over the ring $\chi_B
  L^\infty(Y,\Z)*H\chi_B$.  We have
  \[
  \torsion\big(L^2(H \curvearrowright Y|_B) \otimes_{\chi_B
    L^\infty(Y,\Z)*H\chi_B} f_*(C(G)_*) \big)=
  \torsion\big(C(G)^{(2)}_*\big).\] By the fundamental lemma of
  homological algebra there exists a $\chi_B
  L^\infty(Y,\Z)*H\chi_B$-linear chain homotopy equivalence
  \[
  h_\ast\colon f_\ast(C(G)_\ast) \to C(H)_\ast.
  \]
  Let $\cone_*(h_*)$ denote the mapping cone of
  $h_\ast$~\cite{Luc2}*{Definition~1.33 on p.~35}; each
  $\cone_i(h_\ast)$ is a finitely generated based projective $\chi_B
  L^\infty(Y,\Z)*H \chi_B$-module.  The chain complex $L^2(H
  \curvearrowright Y|_B) \otimes_{\chi_B L^\infty(Y,\Z)*H\chi_B}
  \cone_*(h_*)$ coincides with the cone of the Hilbert-$\caln(H\action
  Y\vert_B)$-chain map
  \[
  h^{(2)}_*\colon L^2(H \curvearrowright Y|_B) \otimes_{\chi_B
    L^\infty(Y,\Z)*H \chi_B} f_*(C(G)_*) \to C(H)^{(2)}_\ast
  \]
  induced by $h_*$.  Proposition \ref{prop-cone} implies
  \begin{align*}
    \rho^{(2)}\big( \cone_*(h^{(2)}_*) \big) &=
    \rho^{(2)}\big( C(H)^{(2)}_* \big) - \rho^{(2)}\big( L^2(H \curvearrowright Y|_B) \otimes_{\chi_B L^\infty(Y,\Z)*H \chi_B} f_*(C(G)_*) \big) \\
    &= \rho^{(2)}\big( C(H)^{(2)}_* \big) - \rho^{(2)}\big(
    C(G)^{(2)}_* \big).
  \end{align*}
  It remains that show that $\torsion(\cone_*(h_*))=0$: Since
  $\cone_*(h_*)$ is acyclic, there exists a chain contraction
  $\delta_*$.  Let us denote the differentials of the $\chi_B
  L^\infty(Y,\Z)*H \chi_B$-chain complex $\cone_*(h_*)$ by $c_*$ and
  the induced differentials of the Hilbert ${\mathcal N}(G)$-chain
  complex $\cone_*(h^{(2)}_*)$ by $c^{(2)}_*$. Lemma \ref{lem-acyclic}
  tells us that
  \[
  \rho^{(2)}\big(\cone_*(h^{(2)}_*)\big) = \ln\determ_{{\mathcal N}(H
    \curvearrowright Y|_B)}((c^{(2)}_* + \delta^{(2)}_*)_{\odd})
  \]
  with $\delta^{(2)}_*$ being the chain contraction induced by
  $\delta_*$.  Notice that
  \[
  (c_* + \delta_*)_{\odd}\colon \oplus_{n \in \Z} \cone_{2n+1}(h_*)
  \to \oplus_{n \in \Z} \cone_{2n}(h_*)
  \]
  is an isomorphism because $(c_* + \delta_*)_{\odd} \circ (c_* +
  \delta_*)_{\even}$ is given by a a lower triangle matrix with $1$ on
  the diagonal (compare~\cite{Luc2}*{Lemma~3.40 on p.~145}).  By
  Lemma~\ref{lem:MDC for projective modules}, for every homomorphism
  of finitely generated based projective $\chi_B
  L^\infty(Y,\Z)*H\chi_B$-modules, the determinant of the induced
  morphisms of Hilbert ${\mathcal N}(H \curvearrowright Y|_B)$-modules
  is $\ge 1$.  Since the determinant is multiplicative for
  isomorphisms~\cite{Luc2}*{Theorem~3.14 on p.~128}, we conclude that
  any isomorphism of Hilbert ${\mathcal N}(H \curvearrowright
  Y|_B)$-modules which comes from a $\chi_B
  L^\infty(Y,\Z)*H\chi_B$-isomorphism has determinant $1$. In
  particular, we obtain $\det_{{\mathcal N}(H \curvearrowright
    Y|_B)}((c^{(2)}_* + \delta^{(2)}_*)_{\odd}) = 1$ and
  $\rho^{(2)}(\cone_*(h^{(2)}_*)) = 0$.
\end{proof}

\section{A cautionary example}\label{sec:caveat}

In Gaboriau's proof~\cite{gaboriau-main} of the orbit equivalence
invariance of $L^2$-Betti numbers one encounters the following
situation: One obtains a homotopy equivalence between Hilbert
$\caln(\calr)$-complexes $C_\ast$ and $D_\ast$ which is not bounded --
unless the given orbit equivalence is uniform. To obtain an estimate
(and by symmetry an equality) between the $L^2$-Betti numbers of these
complexes, one constructs increasing sequences of subcomplexes
$C_\ast^{(k)}\subset C_\ast$ and $ D_\ast^{(k)}\subset D_\ast$ such
that for every $n\ge 0$ the closures of $\bigcup_k C_n^{(k)}$ and
$\bigcup_k D_n^{(k)}$ are $C_n$ and $D_n$, respectively, and bounded
homotopy retracts $C_\ast^{(k)}\rightarrow D_\ast^{(k)}$ for every
$k\in\bbN$. The needed continuity property for the $L^2$-Betti numbers
boils down to the following (easy) continuity property of the von
Neumann trace:

Let $\cala$ be a finite von Neumann algebra and $M$ a finitely
generated Hilbert $\cala$-module. Let $f:M\rightarrow M$ be a positive
$\cala$-morphism and $p_k:M\rightarrow M$ a sequence of
$\cala$-equivariant projections that weakly converge to the
identity. Then
\[
\tr_\cala(f)=\lim_{k\rightarrow\infty}\tr_\cala(f\circ p_k).
\]

To be able to drop the uniformity assumption in
Theorem~\ref{thm:invariance under uniform ME} one would want, among
other things, a similar continuity property of the Fuglede-Kadison
determinant. The following theorem, whose proof we omit, states such
-- but with an important, restrictive assumption on the kernels:

\begin{theorem}\label{the: Determinants and exhausting projections}
  Let $f \colon U\rightarrow V$ be a morphism of finitely generated
  Hilbert $\cala$-modules.  Let $p_k \colon U \to U$ for $k\in\bbN$ be
  a sequence of projections with $\ker(f) \subseteq \im(p_k) \subseteq
  \im(p_{k+1})$ that weakly converges to the identity. Then
  \[\determ_\cala(f)=\lim_{k \to \infty} \determ_\cala(f \circ p_k).\]
\end{theorem}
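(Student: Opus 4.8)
The plan is to reduce to a statement about positive operators and their spectral density functions, to get one inequality essentially for free from weak convergence, and to obtain the matching inequality from the operator concavity of the logarithm. First I would replace $f$ by the positive operator $\Delta=f^\ast f$: since $(f\circ p_k)^\ast(f\circ p_k)=p_k\Delta p_k$ and $\determ_\cala(f)=\determ_\cala(\Delta)^{1/2}$, $\determ_\cala(f\circ p_k)=\determ_\cala(p_k\Delta p_k)^{1/2}$ (as used in Lemma~\ref{lem:positive is enough}), it suffices to prove $\lim_k\determ_\cala(p_k\Delta p_k)=\determ_\cala(\Delta)$. Writing $q$ for the projection onto $(\ker f)^\perp=\overline{\im\Delta}$, the hypothesis $\ker f\subseteq\im p_k$ says exactly $1-q\le p_k$, so $p_k'=p_k-(1-q)$ is again a projection with $p_k'\le q$ and $p_k'\nearrow q$, and a direct check using $\Delta=q\Delta q$ gives $p_k\Delta p_k=p_k'\Delta p_k'$. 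Since $\ker\Delta$ only affects the value of the spectral density function at $\lambda=0$, which is excluded from the Fuglede--Kadison integral, I may discard it and work on $\im q$ with the injective positive operator $S:=\Delta\vert_{\im q}$, the increasing projections $p_k'\nearrow q$, and the compressions $S_k:=p_k'Sp_k'$.

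A monotone family of projections converges strongly to its supremum, so $p_k\to 1$ strongly, whence $S_k\to S$ strongly with the uniform bound $\|S_k\|\le\|\Delta\|$. Normality ($\sigma$-weak continuity) of $\tr_\cala$ then yields $\tr_\cala(S_k^m)\to\tr_\cala(S^m)$ for every $m\in\N$, so the spectral density functions $F(S_k)$ converge weakly to $F(S)$, with supports in the common compact interval $[0,\|\Delta\|]$. As $\ln\lambda$ is continuous and bounded above there, truncating it from below by $\max(\ln\lambda,-M)$, passing to the limit in $k$, and then letting $M\to\infty$ gives the upper estimate
\[
\limsup_{k\to\infty}\ \determ_\cala(S_k)\ \le\ \determ_\cala(S),
\]
exactly by the weak-convergence and truncation argument underlying Lemma~\ref{lem:Approximation Lemma}; note that no integrality or determinant-class hypothesis enters this direction, and the degenerate case $\determ_\cala(S)=0$ is covered as well.

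The matching lower bound is where the assumption on the kernels is genuinely used and is the step I expect to cost the most care. Here I would invoke the operator concavity of $\ln$ on $(0,\infty)$ via the Jensen operator inequality of Hansen--Pedersen, applied to the unital positive compression $\Phi_k(X)=p_k'Xp_k'$ onto the corner $p_k'\cala p_k'$, which gives
\[
\ln\bigl(p_k'Sp_k'\bigr)\ \ge\ p_k'(\ln S)p_k'
\]
on $\im p_k'$. Taking the (unnormalized) trace and using $\ln\determ_\cala(S_k)=\tr_\cala\bigl(\ln_{+}(p_k'Sp_k')\bigr)$ together with the trace property yields $\ln\determ_\cala(S_k)\ge\tr_\cala(p_k'\ln S)$; since $\ln S\le\ln\|\Delta\|$ and $p_k'\nearrow q$, normality of the trace forces $\tr_\cala(p_k'\ln S)\to\tr_\cala(\ln S)=\ln\determ_\cala(S)$, hence $\liminf_k\determ_\cala(S_k)\ge\determ_\cala(S)$, and combining with the upper estimate proves the theorem. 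The technical heart is making this Jensen step rigorous when $0$ is not isolated in the spectrum of $S$: then $\ln S$ is only affiliated to $\cala$ (unbounded below) and $p_k'Sp_k'$ may itself have spectrum accumulating at $0$. I would handle this by first establishing the result when $0$ is isolated, so that $\ln S\in\cala$ is bounded and Jensen applies verbatim, and then approximating a general $S$ by its spectral truncations $SE^{S}_{[\varepsilon,\infty)}+\varepsilon\bigl(q-E^{S}_{[\varepsilon,\infty)}\bigr)$, controlling the error through the monotone dependence of $\determ_\cala$ on $\varepsilon$.
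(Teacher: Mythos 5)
The paper states this theorem but explicitly omits its proof, so I can only assess your argument on its own merits. The overall architecture is sound: the reduction to $\Delta=f^\ast f$, the observation that the hypothesis $\ker(f)\subseteq\im(p_k)$ means $1-q\le p_k$, hence $p_k'=p_kq=qp_k$ is a projection with $p_k\Delta p_k=p_k'\Delta p_k'$, and the upper estimate $\limsup_k\determ_\cala(S_k)\le\determ_\cala(S)$ via moment convergence and truncation of $\ln$ are all correct. (One detail to record there: $F(S_k)$ has an atom at $0$ of mass $\tr_\cala(q-p_k')$, so replacing $\ln\lambda$ by $\max(\ln\lambda,-M)$ overshoots $\ln\determ_\cala(S_k)$ by up to $M\cdot\tr_\cala(q-p_k')$; this is harmless only because $\tr_\cala(q-p_k')\to 0$, which should be said explicitly.)

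The genuine gap is in the final step of the lower bound, the passage from ``$0$ isolated in $\sigma(S)$'' to general injective $S$. Your truncation $S_\varepsilon=\max(S,\varepsilon)$ satisfies $S_\varepsilon\ge S$, hence $p_k'S_\varepsilon p_k'\ge p_k'Sp_k'$ (with equal kernels $\im(q-p_k')$), and monotonicity of the determinant gives $\determ_\cala(p_k'Sp_k')\le\determ_\cala(p_k'S_\varepsilon p_k')$. This inequality points the wrong way: combined with the already-established case of $S_\varepsilon$ it only yields $\limsup_k\determ_\cala(p_k'Sp_k')\le\determ_\cala(S_\varepsilon)$, i.e.\ a second proof of the upper bound, and no lower bound at all. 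Nor is there a uniform-in-$k$ reverse estimate $\ln\determ_\cala(p_k'Sp_k')\ge\ln\determ_\cala(p_k'S_\varepsilon p_k')-C(\varepsilon)$: the perturbation $p_k'(S_\varepsilon-S)p_k'$ lives on a subspace of small dimension, but nothing bounds $p_k'Sp_k'$ away from $0$ on that subspace, so its contribution to the $\ln$-integral is uncontrolled. The repair is to regularize additively rather than by spectral truncation, because the additive regularization commutes with compression: $p_k'(S+\varepsilon)p_k'\vert_{\im p_k'}=p_k'Sp_k'\vert_{\im p_k'}+\varepsilon$, while for an injective positive $T$ one has $\ln\determ_\cala(T)=\inf_{\varepsilon>0}\tr_\cala\bigl(\ln(T+\varepsilon)\bigr)$ by monotone convergence. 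Applying Hansen--Pedersen to the invertible operator $S+\varepsilon$ and taking the infimum over $\varepsilon$ then gives $\ln\determ_\cala(S_k)\ge\tr_\cala\bigl(p_k'(\ln S)p_k'\bigr)\in[-\infty,\infty)$, and splitting $\ln S$ into its bounded positive part and its (possibly unbounded, but trace-finite when $\determ_\cala(S)>0$) negative part, normality of the trace along $p_k'\nearrow q$ yields $\tr_\cala\bigl(p_k'(\ln S)p_k'\bigr)\to\tr_\cala(\ln S)=\ln\determ_\cala(S)$; the case $\determ_\cala(S)=0$ is vacuous for the lower bound. With this substitution your proof goes through.
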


Next we will show that the condition $\ker(f)\subset\im(p_k)$ is
necessary.  In our example, $\cala$ is $L^{\infty}([0,1])$, and $f$ is
the projection onto the second factor $\pr_2 \colon L^2([0,1]) \oplus
L^2([0,1]) \to L^2([0,1])$. We drop the subscript $L^\infty([0,1])$ in
the notation of the trace and the determinant.  Obviously,
$\det(\pr_2) = 1$. Let $\epsilon_k$ for $k = 1,2, \ldots$ be any
sequence of positive real numbers.  Consider the morphism of finitely
generated Hilbert $L^{\infty}([0,1])$-modules
\begin{gather*}
  u_k \colon L^2([1-2^{1-k},1-2^{-k}])\rightarrow L^2([0,1]) \oplus L^2([0,1])\\
  \phi\mapsto \Bigl(\frac{\epsilon_k}{\sqrt{1 + \epsilon_k^2}} \cdot
  \overline{\phi}, \frac{1}{\sqrt{1 + \epsilon_k^2}} \cdot
  \overline{\phi}\Bigr),
\end{gather*}
where $\overline{\phi} \in L^2([0,1])$ is obtained from $\phi$ by
extending by zero on the complement of
$[1-2^{1-k},1-2^{-k}]$. Consider the morphism of finitely generated
Hilbert $L^{\infty}([0,1])$-modules
\begin{gather*}
  v_k \colon L^2([0,1-2^{1-k}]) \oplus L^2([0,1-2^{1-k}])\rightarrow
  L^2([0,1]) \oplus L^2([0,1])\\
  (\phi_1,\phi_2)\mapsto
  \bigl(\overline{\phi_1},\overline{\phi_2}\bigr).
\end{gather*}
The morphisms $u_k$ and $v_k$ are isometric
$L^{\infty}([0,1])$-embeddings, and $\im(u_k)$ and $\im(v_k)$ are
othogonal to one another. Then
\[A_k ~ = ~ \im(u_k) \oplus \im(v_k).\] is an increasing sequence of
closed subspaces.  Since
\[\dim(A_k) = 2 \cdot (1-2^{1-k}) + 2^{1-k} -
2^{-k}\xrightarrow{k\rightarrow\infty} 2,\] the sequence of
projections defined by
\[p_k \colon L^2([0,1]) \oplus L^2([0,1]) \to L^2([0,1]) \oplus
L^2([0,1]),~\Im(p_k)=A_k\] weakly converges to the
identity. Furthermore, we have
\begin{align*}
  \det(\pr_2 \circ p_k) =\det(\pr_2|_{A_k})
  &= \det\bigl(\pr_2 \circ (u_k \oplus v_k)\bigr) \\
  &=\det\bigl((\pr_2 \circ u_k)  \oplus (\pr_2 \circ v_k)\bigr)\\
  &=\det(\pr_2 \circ u_k)  \cdot \det(\pr_2 \circ v_k)\\
  &=\det\Bigl(\frac{1}{\sqrt{1 + \epsilon_k^2}} \cdot
  \id_{L^2([1-2^{1-k},1-2^{-k}])}\Bigr) \\
  & \hspace{10mm} \cdot \det\bigl(\pr_2|_{L^2([0,1-2^{1-k}]) \oplus
    L^2([0,1-2^{1-k}])}\bigr)
  \\
  &=\Bigl(\frac{1}{\sqrt{1 +
      \epsilon_k^2}}\Bigr)^{\dim(L^2([1-2^{1-k},1-2^{-k}]))}\cdot 1\\
  &=
  \Bigl(\frac{1}{\sqrt{1 + \epsilon_k^2}}\Bigr)^{2^{1-k} - 2^{-k}}\\
  &=\bigl(1 + \epsilon_k^2\bigr)^{-2^{-k-1}}.
\end{align*}
If we choose $\epsilon_k= \sqrt{k^{2^{k+1}}-1}$, then we obtain that
\begin{align*}
  \det(\pr_2 \circ p_k)  & = \frac{1}{k}\text{ for $k =1,2 \dots$;} \\
  \det(\pr_2) &=1.
\end{align*}

\begin{bibdiv}
\begin{biblist}
	
	\bib{burgh}{article}{
	   author={Burghelea, D.},
	   author={Friedlander, L.},
	   author={Kappeler, T.},
	   author={McDonald, P.},
	   title={Analytic and Reidemeister torsion for representations in finite
	   type Hilbert modules},
	   journal={Geom. Funct. Anal.},
	   volume={6},
	   date={1996},
	   number={5},
	   pages={751--859},
	}
	\bib{connes}{article}{
	   author={Connes, A.},
	   title={Classification of injective factors. Cases $II\sb{1},$
	   $II\sb{\infty },$ $III\sb{\lambda },$ $\lambda \not=1$},
	   journal={Ann. of Math. (2)},
	   volume={104},
	   date={1976},
	   number={1},
	   pages={73--115},
	}
	
	\bib{dixmier}{book}{
	   author={Dixmier, Jacques},
	   title={von Neumann algebras},
	   series={North-Holland Mathematical Library},
	   volume={27},
	   publisher={North-Holland Publishing Co.},
	   place={Amsterdam},
	   date={1981},
	   pages={xxxviii+437},
	}
	
	\bib{approx}{article}{
	   author={Dodziuk, J{\'o}zef},
	   author={Linnell, Peter},
	   author={Mathai, Varghese},
	   author={Schick, Thomas},
	   author={Yates, Stuart},
	   title={Approximating $L\sp 2$-invariants and the Atiyah conjecture},
	   note={Dedicated to the memory of J\"urgen K. Moser},
	   journal={Comm. Pure Appl. Math.},
	   volume={56},
	   date={2003},
	   number={7},
	   pages={839--873},
	}

	\bib{dye1}{article}{
	   author={Dye, H. A.},
	   title={On groups of measure preserving transformation. I},
	   journal={Amer. J. Math.},
	   volume={81},
	   date={1959},
	   pages={119--159},
	}

	\bib{dye2}{article}{
	   author={Dye, H. A.},
	   title={On groups of measure preserving transformations. II},
	   journal={Amer. J. Math.},
	   volume={85},
	   date={1963},
	   pages={551--576},
	}

	\bib{ES1}{article}{
	   author={Elek, G{\'a}bor},
	   author={Szab{\'o}, Endre},
	   title={Hyperlinearity, essentially free actions and $L\sp 2$-invariants.
	   The sofic property},
	   journal={Math. Ann.},
	   volume={332},
	   date={2005},
	   number={2},
	   pages={421--441},
	}

	\bib{feldman+mooreI}{article}{
	   author={Feldman, Jacob},
	   author={Moore, Calvin C.},
	   title={Ergodic equivalence relations, cohomology, and von Neumann
	   algebras. I},
	   journal={Trans. Amer. Math. Soc.},
	   volume={234},
	   date={1977},
	   number={2},
	   pages={289--324},
	}

	\bib{feldman+mooreII}{article}{
	   author={Feldman, Jacob},
	   author={Moore, Calvin C.},
	   title={Ergodic equivalence relations, cohomology, and von Neumann
	   algebras. II},
	   journal={Trans. Amer. Math. Soc.},
	   volume={234},
	   date={1977},
	   number={2},
	   pages={325--359},
	}
	
	\bib{Fur1}{article}{
	   author={Furman, Alex},
	   title={Gromov's measure equivalence and rigidity of higher rank lattices},
	   journal={Ann. of Math. (2)},
	   volume={150},
	   date={1999},
	   number={3},
	   pages={1059--1081},
	}

	\bib{Fur2}{article}{
	   author={Furman, Alex},
	   title={Orbit equivalence rigidity},
	   journal={Ann. of Math. (2)},
	   volume={150},
	   date={1999},
	   number={3},
	   pages={1083--1108},
	}

	\bib{Fur3}{article}{
	   author={Furman, Alex},
	   title={Outer automorphism groups of some ergodic equivalence relations},
	   journal={Comment. Math. Helv.},
	   volume={80},
	   date={2005},
	   number={1},
	   pages={157--196},
	}

	\bib{gaboriau-main}{article}{
	   author={Gaboriau, Damien},
	   title={Invariants $l\sp 2$ de relations d'\'equivalence et de groupes},
	   language={French},
	   journal={Publ. Math. Inst. Hautes \'Etudes Sci.},
	   number={95},
	   date={2002},
	   pages={93--150},
	}
	
	\bib{gaboriau-examples}{article}{
	   author={Gaboriau, Damien},
	   title={Examples of groups that are measure equivalent to the free group},
	   journal={Ergodic Theory Dynam. Systems},
	   volume={25},
	   date={2005},
	   number={6},
	   pages={1809--1827},
	}

	\bib{Gro}{article}{
	   author={Gromov, Mikhail},
	   title={Asymptotic invariants of infinite groups},
	   conference={
	      title={Geometric group theory, Vol.\ 2},
	      address={Sussex},
	      date={1991},
	   },
	   book={
	      series={London Math. Soc. Lecture Note Ser.},
	      volume={182},
	      publisher={Cambridge Univ. Press},
	      place={Cambridge},
	   },
	   date={1993},
	   pages={1--295},
	}

	\bib{jones}{book}{
	   author={Jones, Vaughan},
	   author={Sunder, V. S.},
	   title={Introduction to subfactors},
	   series={London Mathematical Society Lecture Note Series},
	   volume={234},
	   publisher={Cambridge University Press},
	   date={1997},
	}

	\bib{lam}{book}{
	   author={Lam, T. Y.},
	   title={Lectures on modules and rings},
	   series={Graduate Texts in Mathematics},
	   volume={189},
	   publisher={Springer-Verlag},
	   place={New York},
	   date={1999},
	   pages={xxiv+557},
	}
	\bib{lueck-gafa}{article}{
	   author={L{\"u}ck, W.},
	   title={Approximating $L\sp 2$-invariants by their finite-dimensional
	   analogues},
	   journal={Geom. Funct. Anal.},
	   volume={4},
	   date={1994},
	   number={4},
	   pages={455--481},
	}

	\bib{Luc1}{article}{
	   author={L{\"u}ck, Wolfgang},
	   title={Hilbert modules and modules over finite von Neumann algebras and
	   applications to $L\sp 2$-invariants},
	   journal={Math. Ann.},
	   volume={309},
	   date={1997},
	   number={2},
	   pages={247--285},
	}

	\bib{Luc2}{book}{
	   author={L{\"u}ck, Wolfgang},
	   title={$L\sp 2$-invariants: theory and applications to geometry and
	   $K$-theory},
	   series={Ergebnisse der Mathematik und ihrer Grenzgebiete. 3. Folge. A
	   Series of Modern Surveys in Mathematics [Results in Mathematics and
	   Related Areas. 3rd Series. A Series of Modern Surveys in Mathematics]},
	   volume={44},
	   publisher={Springer-Verlag},
	   place={Berlin},
	   date={2002},
	   pages={xvi+595},
	}
	
	\bib{popa-survey}{article}{
	   author={Popa, Sorin},
	   title={Deformation and rigidity for group actions and von Neumann
	   algebras},
	   conference={
	      title={International Congress of Mathematicians. Vol. I},
	   },
	   book={
	      publisher={Eur. Math. Soc., Z\"urich},
	   },
	   date={2007},
	   pages={445--477},
	}
	
	\bib{sauer-promotion}{thesis}{
		author={Sauer, Roman}, 
		title={$L^2$-Invariants of groups and discrete measured groupoids}, 
		type={Dissertation}, 
		organization={WWU M\"unster}, 
		eprint={http://nbn-resolving.de/urn:nbn:de:hbz:6-85659549583}, 
	}
	
	\bib{Sau}{article}{
	   author={Sauer, Roman},
	   title={$L\sp 2$-Betti numbers of discrete measured groupoids},
	   journal={Internat. J. Algebra Comput.},
	   volume={15},
	   date={2005},
	   number={5-6},
	   pages={1169--1188},
	}

	\bib{sauer-amenable}{article}{
	   author={Sauer, Roman},
	   title={Homological invariants and quasi-isometry},
	   journal={Geom. Funct. Anal.},
	   volume={16},
	   date={2006},
	   number={2},
	   pages={476--515},
	}

	\bib{Sch}{article}{
	   author={Schick, Thomas},
	   title={$L\sp 2$-determinant class and approximation of $L\sp 2$-Betti
	   numbers},
	   journal={Trans. Amer. Math. Soc.},
	   volume={353},
	   date={2001},
	   number={8},
	   pages={3247--3265 (electronic)},
	}

	\bib{shalom-amenable}{article}{
	   author={Shalom, Yehuda},
	   title={Harmonic analysis, cohomology, and the large-scale geometry of
	   amenable groups},
	   journal={Acta Math.},
	   volume={192},
	   date={2004},
	   number={2},
	   pages={119--185},
	}
	\bib{take}{book}{
	   author={Takesaki, M.},
	   title={Theory of operator algebras. III},
	   series={Encyclopaedia of Mathematical Sciences},
	   volume={127},
	   note={Operator Algebras and Non-commutative Geometry, 8},
	   publisher={Springer-Verlag},
	   place={Berlin},
	   date={2003},
	   pages={xxii+548},
	}

	\bib{wegner}{article}{
	   author={Wegner, Christian},
	   title={$L\sp 2$-invariants of finite aspherical CW-complexes},
	   note={to appear in manuscripta mathematica},
	}

\end{biblist}
\end{bibdiv}

\end{document}